\documentclass[11pt, reqno]{amsart}
\usepackage[left=1in, right=1in]{geometry}
\usepackage{amsmath, amsthm, amssymb}
\usepackage{graphicx}
\usepackage[colorlinks=true]{hyperref}
\usepackage{xcolor}
\usepackage{mathtools}
\usepackage[english]{babel}
\usepackage{booktabs}
\usepackage{enumitem}
\usepackage{threeparttable}

\theoremstyle{plain}
\newtheorem{theorem}{Theorem}[section]
\newtheorem{lemma}{Lemma}[section]
\newtheorem{remark}{Remark}[section]

\newtheorem{proposition}{Proposition}[section]

\numberwithin{equation}{section}

\allowdisplaybreaks[4]

\begin{document}
	\title[Chern-Simons gauged $O(3)$ sigma equations]{Almost optimal well-posedness for Chern--Simons gauged $O(3)$ sigma model under the Lorenz gauge}
	
	\author{HUALI ZHANG AND JIE ZHOU}
	
	\date{}
	
	\begin{abstract}	

	In this paper, we study the low-regularity Cauchy problem for the Chern--Simons gauged $O(3)$ sigma model in $\mathbb{R}^{1+d}$ ($d=1,2$) under the Lorenz gauge.

	For $d=1$, we establish local well-posedness for initial data $(\boldsymbol{\phi}_0,\mathbf{A}_0)\in H^{s_1}(\mathbb{R})\times H^{s_1-1}(\mathbb{R})$ with $s_1>\frac12$. This improves the previous result of Jin and Huh \cite{HJ} by one quarter of a derivative and is almost optimal in view of the scaling-invariant regularities $\dot H^{1/2}(\mathbb{R})$ for the matter field and $\dot H^{-1/2}(\mathbb{R})$ for the gauge field.

	For $d=2$, we establish local well-posedness for initial data $(\boldsymbol{\phi}_0,\mathbf{A}_0)\in H^{s_2}(\mathbb{R}^2)\times H^{s_2-\frac34}(\mathbb{R}^2)$ with $s_2>1$. This improves the previous result of Jin and Zhang \cite{JZ} by one quarter of a derivative and brings the regularity threshold close to the scaling-invariant exponents $\dot H^{1}(\mathbb{R}^2)$ and $\dot H^{0}(\mathbb{R}^2)$ for the matter and gauge fields, respectively.

	The analysis relies on two main ingredients. In two space dimensions, we identify the complete null structure of the derivative nonlinearities, allowing the entire system to be treated within a unified null-form framework. In one space dimension, we establish a direct energy estimate in the function space introduced by Keel and Tao, avoiding the finite-propagation reduction to a small-data problem and enabling the low-regularity iteration for general initial data.

	\noindent {\bfseries Key words:}~Chern--Simons gauged $O(3)$ sigma model; Low regularity; Null form; Lorenz gauge.
	\end{abstract}
	\maketitle
	
	\section{Introduction}
	
	In this paper, we investigate the low-regularity Cauchy problem for the Chern--Simons gauged $O(3)$ sigma system under the Lorenz gauge in one and two spatial dimensions. The model originates from the classical $O(3)$ sigma model in quantum field theory and shares close analogies with the Yang--Mills and Yang--Mills--Higgs equations. The pure (1+2)-dimensional $O(3)$ sigma model has been extensively studied in both mathematics \cite{RS} and theoretical physics \cite{L,ATY, GG, KLL} because of its rich geometric structure and scale invariance. However, the scale invariance allows solitons to change their size during time evolution without any energy cost, preventing them from serving as realistic particle models. To overcome this drawback, one introduces gauge field dynamics through either the Maxwell or the Chern--Simons action, thereby stabilizing the soliton size. In this work, we focus on the latter case, commonly refer red to as the Chern--Simons gauged $O(3)$ sigma system.
	
	More precisely, the Euler--Lagrange equations of the Chern--Simons gauged \(O(3)\) sigma system in \(\mathbb{R}^{1+2}\) take the form
	\begin{align}
		D_{\mu} D^{\mu} \boldsymbol{\phi}+\boldsymbol{\phi}\left(D_{\mu} \boldsymbol{\phi} \cdot D^{\mu} \boldsymbol{\phi}\right) 
		& = - \frac{1}{\kappa^{2}}\left(\boldsymbol{\phi}\left(n \cdot \boldsymbol{\phi}\right)-n(\boldsymbol{\phi} \cdot \boldsymbol{\phi})\right)\left(1-n \cdot \boldsymbol{\phi}\right)^{2}\left(1+2 n \cdot \boldsymbol{\phi}\right),  
		\label{2EL1}
		\\
		\kappa F_{\nu \rho} 
		& = - \epsilon_{\mu \nu \rho}(\boldsymbol{\phi} \times D^{\mu} \boldsymbol{\phi}) \cdot n,
		\label{2EL2}
	\end{align}
	subject to the initial data
	\begin{equation}\label{2ini}
		\boldsymbol{\phi}(0, \cdot)=\boldsymbol{\phi}_{0}, 
		\quad \partial_{t}\boldsymbol{\phi}(0,\cdot)=\boldsymbol{\phi}_1, 
		\quad A_{\mu}(0, \cdot)=a_{\mu}.
	\end{equation}
	Here $D_\mu\boldsymbol{\phi}=\partial_\mu\boldsymbol{\phi}+A_\mu(n\times\boldsymbol{\phi}), F_{\mu\nu}=\partial_\mu A_\nu-\partial_\nu A_\mu$ denote the covariant derivative and the electromagnetic field tensor, respectively. The matter field $\boldsymbol{\phi}=(\phi_1,\phi_2,\phi_3)^T$ takes values in the unit sphere $\mathbb S^2$, namely $\phi_1^2 + \phi_2^2 + \phi_3^2 = 1$, while $A_\mu$ is a real-valued gauge field with $\mathbf{A}=(A_0,A_1,A_2)^T$. Throughout the paper, we denote $n=(0,0,1)^T$. $\kappa>0$ is the Chern--Simons coupling constant, and $\epsilon^{\mu\nu\rho}$ is the totally antisymmetric tensor with  $\epsilon^{012}=1$.

	\subsection{Historical results}
	
	Rigorous mathematical studies of the Chern--Simons gauged $O(3)$ sigma model have primarily focused on its static regime. For both the symmetric and asymmetric vacuum cases, the governing equations reduce to a elliptic system, and extensive results have been obtained on the existence, uniqueness, classification, and qualitative properties of soliton  solutions. 
	
	For the symmetric vacuum, Yang \cite{Yang} established the existence of topological solutions and radially symmetric nontopological solutions, while Choe and Nam \cite{CN} proved the uniqueness of topological solutions for sufficiently small or large Chern--Simons coupling constants $\kappa$. Choe and Han \cite{CH} subsequently obtained a complete classification of radially symmetric solutions together with precise asymptotic behavior.
	
	In the asymmetric vacuum, Choe et al. \cite{CHLL1} classified all finite-energy solutions into topological, type-I nontopological and type-II nontopological solutions, and established their uniqueness and structural properties. Building on this classification, subsequent works constructed bubbling solutions \cite{CHLL2} and analyzed the asymptotic behavior of general solutions \cite{CHLL1}. More recently, Chern, Chen and Shen \cite{CCS} resolved the remaining open problem concerning type-II nontopological solutions, thereby completing the classification theory.

	Despite the substantial progress on the static theory, much less is known about the corresponding dynamical problem, namely the Cauchy problem for the CS-$O(3)$ sigma system. In particular, low-regularity well-posedness, especially near the scaling-critical regularity, remains largely open. To adress this issue, one must first impose a gauge condition, as the system is invariant under gauge transformations: 
	\begin{equation} \label{gaugetrans}
		\boldsymbol{\phi} = (z, \phi_3) \rightarrow(z e^{\operatorname{i} \chi}, \phi_3), \quad A_{\mu} \rightarrow A_{\mu}-\partial_{\mu} \chi,
	\end{equation}
	where $\chi$ is a real valued smooth function on $\mathbb{R}^{1+d}$ and  $z=\phi_1+\operatorname{i} \phi_2$. Consequently, a solution to \eqref{2EL1}--\eqref{2EL2} is formed by a class of gauge-equivalent pairs $(\boldsymbol{\phi}, A_{\mu})$. 
	
	We work exclusively under the Lorenz gauge $\partial_{\mu} A^{\mu}=0$, which preserves the Lorentz covariance of the equations and allows the CS-$O(3)$ sigma system to be reformulated as a coupled system of nonlinear wave equations. Unlike the Coulomb gauge, where part of the dynamics is governed by elliptic equations, the Lorenz gauge leads to a purely hyperbolic formulation. The absence of elliptic regularization makes the Lorenz-gauge problem substantially more challenging.
	
	Exploiting this algebraic property, we rigorously reformulate the system \eqref{2EL1}--\eqref{2EL2} as:
	\begin{align}
		\begin{split}\label{cso3s1}
			\square \boldsymbol{\phi} = 
			& -\boldsymbol{\phi}(\partial_\mu \phi \cdot \partial^\mu \phi+2 A_{\mu} \partial^{\mu} \boldsymbol{\phi} \cdot(n \times \boldsymbol{\phi})+A_{\mu} A^{\mu}|n \times \boldsymbol{\phi}|^{2})-2 A^{\mu}(n \times \partial_{\mu} \boldsymbol{\phi}) 
			\\ 
			& -A_{\mu} A^{\mu} n \times(n \times \boldsymbol{\phi}) - V(\boldsymbol{\phi}), 
		\end{split}
			\\
		\begin{split}\label{cso3s2}
			\square A_{\mu} =
			& ~ \frac{1}{\kappa} \epsilon_{\mu \nu \rho}(n \cdot(\partial^{\nu} \boldsymbol{\phi} \times \partial^{\rho} \boldsymbol{\phi})+A^{\rho} \partial^{\nu}((\boldsymbol{\phi} \times(n \times \boldsymbol{\phi})) \cdot n)),
		\end{split}
	\end{align}
	with
	\begin{equation}\label{2ini2}
		\boldsymbol{\phi}(0, \cdot)=\boldsymbol{\phi}_{0}, 
		\quad \partial_{t}\boldsymbol{\phi}(0,\cdot)=\boldsymbol{\phi}_1, 
		\quad A_{\mu}(0, \cdot)=a_{\mu}, 
		\quad \partial_{t} A_{\mu}(0, \cdot)=\dot{a}_{\mu}.
	\end{equation}
	The potential is given by $V(\boldsymbol{\phi}) = \frac{1}{\kappa^{2}}\left(\boldsymbol{\phi}\left(n \cdot \boldsymbol{\phi}\right)-n(\boldsymbol{\phi} \cdot \boldsymbol{\phi})\right)\left(1-n \cdot \boldsymbol{\phi}\right)^{2}\left(1+2 n \cdot \boldsymbol{\phi}\right) $.
	 
	The same procedure also applies to the one-dimensional reduction. Although the reduced system no longer retains the original physical interpretation,  it preserves the essential gauge structure and the null structure of the derivative terms. Consequently, it serves as a natural model for investigating low-regularity well-posedness. The corresponding wave system is given by
	\begin{align}
		\begin{split}
			\Box \boldsymbol{\phi} 
			& = - \boldsymbol{\phi} \left( \partial_{\mu} \boldsymbol{\phi} \cdot \partial^{\mu} \boldsymbol{\phi} + 2 A_{\mu} \partial^{\mu} \boldsymbol{\phi} \cdot (n \times \boldsymbol{\phi}) + A_{\mu} A^{\mu} |n \times \boldsymbol{\phi}|^2 + \phi_3 U(\phi_3, N)  \right)
			\\
			& - 2 A_{\mu} \partial^{\mu}(n \times \boldsymbol{\phi}) - A_{\mu} A^{\mu} n \times (n \times \boldsymbol{\phi}),
			\label{1cso3s1}
		\end{split}
		\\
		\kappa \Box A_0
		& = \partial_1(N|n \times \boldsymbol{\phi}|^2),
		\label{1cso3s2}
		\\
		\kappa \Box A_1
		& = \partial_0(N|n \times \boldsymbol{\phi}|^2),
		\label{1cso3s3}
		\\
		\kappa \Box N
		& = D_1(n \times \boldsymbol{\phi}) \cdot D_0 \boldsymbol{\phi} - D_0(n \times \boldsymbol{\phi}) \cdot D_1 \boldsymbol{\phi} - (\phi_1^2 + \phi_2^2)F_{01},
		\label{1cso3s4}
	\end{align}
	with the initial data
	\begin{equation}\label{1ini2}
		\begin{aligned}
			& \boldsymbol{\phi}(0, \cdot) = \boldsymbol{\phi}_0, \quad \partial_t \boldsymbol{\phi}(0, \cdot) = \boldsymbol{\phi}_1, \quad A_{\mu}(0, \cdot) = a_{\mu}, \quad \partial_t A_{\mu}(0, \cdot) = \dot{a}_{\mu}, 
			\\
			& N(0, \cdot) = n_0, \quad \partial_t N(0, \cdot) = - \langle n \times \boldsymbol{\phi}_0, \partial_1 \boldsymbol{\phi}_0 + a_1(n \times \boldsymbol{\phi}_0) \rangle.
		\end{aligned}
	\end{equation}
	where $N$ is a real field. We refer the reader to the appendix (see section \ref{appendix}) for the detailed derivation of \eqref{cso3s1}--\eqref{cso3s2} and \eqref{1cso3s1}--\eqref{1cso3s2}. This reformulation places the problem within the general framework of semilinear wave equations with derivative nonlinearities. We refer the reader to \cite{AnKapi,DFW,FW,GN,KM1,KM2,Ta1,Ta2,Zhang,Zhou2} for related developments.
	
	Regarding the well-posedness of the above systems, Huh and Jin \cite{HJ} studied local solutions for $(\boldsymbol{\phi}_0, a_{\mu}) \in H^{\frac{3}{4}+}(\mathbb{R}) \times H^{\frac{1}{4}+}(\mathbb{R})$ and further proved global well-posedness for $(\boldsymbol{\phi}_0, a_{\mu}) \in H^1(\mathbb{R}) \times H^{\frac{1}{2}+}(\mathbb{R})$ by energy conservation when $d=1$. In the case of $d=2$, the local well-posedness was first established by Jin and Zhang \cite{JZ} with initial data $(\boldsymbol{\phi}_0, a_{\mu}) \in H^{1+}(\mathbb{R}^2) \times H^{\frac{1}{2}+}(\mathbb{R}^2)$.
	
	Having introduced the Chern--Simons gauged $O(3)$ sigma system, we next place it in the broader context of Chern--Simons gauge theories by comparing it with several closely related models, including the Chern--Simons--Higgs (CSH), Chern--Simons--Dirac (CSD), and Chern--Simons gauged nonlinear Schr\"{o}dinger (CSS) models. Although these systems share the same gauge structure, they dy in iffer substantiallthe differential order of the matter equations. More precisely, the CS-$O(3)$ sigma and CSH models both involve second-order derivatives, the CSD model features first-order derivatives, while the CSS system combines a first-order time derivative with second-order spatial derivatives. These structural differences lead to distinct analytical difficulties in the corresponding low-regularity well-posedness. 
	
	The Cauchy problem for the CSH system has been extensively studied very recently. Under the Lorenz gauge, Huh \cite{Huh1} first established local well-posedness for initial data $ (\boldsymbol{\phi}_0, \mathbf{A}_0) \in H^{\frac{5}{4}+}(\mathbb{R}^2) \times H^{\frac{3}{4}+}(\mathbb{R}^2) $. This result was subsequently improved by Selberg and Tesfahun \cite{ST}, and finally by Huh and Oh \cite{HO}, who introduced abstract bilinear null forms to obtain local well-posedness for $(\boldsymbol{\phi}_0, \mathbf{A}_0) \in H^{\frac34+}(\mathbb{R}^2) \times H^{\frac14+}(\mathbb{R}^2)$. More recently, Huh \cite{Huh2} proved local well-posedness under the Coulomb gauge with  $(\boldsymbol{\phi}_0, \mathbf{A}_0) \in H^{1+}(\mathbb{R}^2) \times H^{0+}(\mathbb{R}^2)$, which is almost optimal with respect to the regularity of the gauge field.
	
	Similar progress has been made for the CSD system. Huh \cite{Huh4} obtained the local well-posedness result under the Lorenz gauge, which was later improved by Huh and Oh \cite{HO} through the use of abstract bilinear null forms. The best known result is $ (\boldsymbol{\phi}_0, \mathbf{A}_0) \in {H}^{\frac{1}{4}+}(\mathbb{R}^2) \times {H}^{\frac{1}{4}+}(\mathbb{R}^2)$. Under the Coulomb gauge, Huh \cite{Huh4} established local well-posedness for $(\boldsymbol{\phi}_0, \mathbf{A}_0 ) \in H^{\frac{1}{2}+}(\mathbb{R}^2) \times L^2(\mathbb{R}^2)$, where the regularity of the gauge field is scaling optimal.
	
	For the planar CSS system, the gauge potentials are uniquely determined by the matter field and therefore the Cauchy problem we only focus only on the regularity of $\boldsymbol{\phi}$. Under the Coulomb gauge, Berg\'{e}, Bouard and Saut\cite{BBS} proved local well-posedness when $\boldsymbol{\phi}_0 \in H^2(\mathbb{R}^2)$, which was subsequently improved by Huh \cite{Huh5} to the energy space $H^1(\mathbb{R}^2)$ by using Strichartz estimates. Lim \cite{Lim} further lowered the regularity to $\boldsymbol{\phi}_0 \in H^{1+}(\mathbb{R}^2)$, while Liu, Smith and Tataru \cite{LST} employed the $U^2$ and $V^2$ spaces to prove local well-posedness for small initial data in $H^{0+}(\mathbb{R}^2)$ under the heat gauge.
	
	The above comparison shows that low-regularity well-posedness is by now well understood for several fundamental Chern--Simons gauge theories, while the corresponding theory for the CS-$O(3)$ sigma system remains far from complete. This gap provides the main motivation for the present work. For convenience, the current best results together with the corresponding scaling-invariant regularities are summarized in Table~\ref{1}.
	\begin{table}[htbp]
		\centering
		\caption{Current results and scaling-invariant spaces for $n=2$}
		\label{1}
		\begin{tabular}{l c c c c}
			\toprule
			\textbf{Systems} & Lorenz gauge & Coulomb gauge & Heat gauge & scaling-invariant spaces
			\\
			\midrule
			CSH  & $ H^{\frac{3}{4}+} \times H^{\frac{1}{4}+}$ & $ H^{1+} \times H^{0+}$ & -- &$\dot{H}^{\frac{1}{2}} \times L^2$
			\\
			CSD  & $H^{\frac{1}{4}+} \times H^{\frac{1}{4}+}$  & $H^{\frac{1}{2}+} \times L^2$ & -- &$L^2 \times L^2$
			\\
			CSS  & --      & $H^{1+}$     & $H^{0+}$ \small(small data) &$L^2$
			\\
			\bottomrule
		\end{tabular}
	\end{table}

	\subsection{Motivation}
	

	In the decoupling limit $\mathbf{A}=0$, the systems \eqref{cso3s1}--\eqref{cso3s2} and \eqref{1cso3s1}--\eqref{1cso3s2} reduce to the classical wave maps equation. The low-regularity Cauchy problem for wave maps has been extensively studied, and local well-posedness is now available at regularities arbitrarily close to the scaling-invariant threshold $s_c=d/2$. In three space dimensions, Klainerman and Machedon~\cite{KM2} initiated the low-regularity analysis of wave maps by uncovering the underlying null structure. This fundamental observation led to bilinear estimates without loss of derivatives and established local well-posedness for initial data in $H^{3/2+}\times H^{1/2+}$. In one space dimension, Keel and Tao~\cite{KT} introduced the null-coordinate framework and proved almost-critical local well-posedness for initial data in $H^{\frac12+}\times H^{-\frac12+}$. For other dimensional results, we refer the reader to the works of Tataru, Tataru-Sterbenz, and Zhou \cite{STa,Ta3,Ta4,Zhou2}. More recently, Zhou~\cite{WZ,Zhou5} developed a new div--curl approach for wave maps, providing an alternative physical-space method for capturing the essential bilinear cancellations.

	For the Chern--Simons gauged $O(3)$ sigma model, the scaling-invariant regularities for the matter and gauge fields are $\dot{H}^{d/2}(\mathbb{R}^d)$ and $\dot{H}^{d/2-1}(\mathbb{R}^d)$, respectively. Existing well-posedness results for the coupled system, such as \cite{HJ} in one space dimension and \cite{JZ} in two space dimensions, require substantially higher regularity than these scaling-invariant thresholds. This discrepancy naturally raises a fundamental question: does the gauge coupling genuinely introduce an intrinsic regularity barrier, or does the apparent loss of regularity merely reflect limitations of the analytical techniques employed so far? Addressing this question is the primary motivation of the present work.

	A closer examination of the previous analyses suggests that the main obstacle lies not in the strength of the nonlinear coupling itself, but in the incomplete exploitation of the algebraic structure of the system. In two space dimensions, Jin and Zhang \cite{JZ} identified the null structure hidden in the interaction $A_{\mu}\partial^{\mu}\boldsymbol{\phi}$, while the remaining derivative nonlinearities $\epsilon_{\mu\nu\rho}A^{\rho}\partial^{\nu}\boldsymbol{\phi}$ and $A_{\mu}A^{\mu}$ were treated without taking advantage of any additional cancellation. Consequently, a significant portion of the null structure inherent in the coupled system remained unexplored, leaving open the possibility that the existing regularity assumptions are not optimal.
	
	The one-dimensional case presents a different difficulty. Even though the full null structure is available, the product estimates in wave--Sobolev spaces are insufficient to exploit these cancellations near the scaling-critical regularity. Consequently, the nonlinear iteration cannot be closed within this framework at the desired regularity, which explains the limitation of the result obtained by Jin and Huh \cite{HJ}. This indicates that a different functional framework, better adapted to the null geometry of the equations, is required.
	
	These observations indicate that further progress depends on combining a more complete understanding of the hidden null structures with analytical tools better suited to exploiting them. Guided by this perspective, the present paper develops a new formulation of the Chern--Simons gauged $O(3)$ sigma model that reveals the full null structure of the derivative nonlinearities, together with an analytical framework capable of taking advantage of these cancellations. This ultimately leads to improved local well-posedness results in both one and two space dimensions.

	\subsection{Statement of the main results}
	
	Let us state our main results concerning the local well-posedness of the CS-$O(3)$ sigma system in one and two spatial dimensions. 
	
	\begin{theorem}
		\label{main thm 1dim}
		For $d=1$ and $s>\frac12$, suppose the initial data in the following Sobolev spaces:
		\begin{gather*}
			\boldsymbol{\phi}(0, \cdot) = \boldsymbol{\phi}_0 \in H^{s}(\mathbb{R}), \quad \partial_t \boldsymbol{\phi}(0, \cdot) = \boldsymbol{\phi}_1 \in H^{s-1}(\mathbb{R}),
			\\
			A_{\mu}(0, \cdot)=a_{\mu} \in H^{s-1}(\mathbb{R}), \quad N(0, \cdot) = n_0 \in H^{s}(\mathbb{R}), 
		\end{gather*}
		with $\langle \boldsymbol{\phi}_0, \boldsymbol{\phi}_1 \rangle = 0$. Then the initial value problem \eqref{1cso3s1}--\eqref{1ini2} is locally well-posed. More precisely, there exist a time $T>0$ depending continuously on the data such that the solution satisfies
		\begin{gather*}
			\boldsymbol{\phi} \in C([0, T] ; H^{s}(\mathbb{R}))  \cap C^1( [0, T] ; H^{s-1}(\mathbb{R})),
			\\
			A_{\mu} \in C([0, T] ; H^{s-1}(\mathbb{R})),
			\\
			N \in C([0, T] ; H^{s}(\mathbb{R})),
		\end{gather*}
	\end{theorem}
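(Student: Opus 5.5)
We will work in the null coordinates $u=t+x$, $v=t-x$, in which $\Box=-4\partial_u\partial_v$. The iteration space will be the Keel--Tao space $X^{s}$ adapted to $\partial_u\partial_v$: its norm controls $\|\langle\xi\rangle^{s}\langle\eta\rangle^{b}\tilde\phi\|$ and the symmetric counterpart in the Fourier variables $(\xi,\eta)$ dual to $(u,v)$, with $b$ slightly above $1/2$. Since this is a product-type Sobolev structure in $(u,v)$, the null forms $\partial_u f\,\partial_v g$ are the objects it handles best. In one dimension the derivative nonlinearities in \eqref{1cso3s1}--\eqref{1cso3s4} all reduce to combinations of:
\begin{itemize}
\item $\partial_\mu\boldsymbol{\phi}\cdot\partial^\mu\boldsymbol{\phi}=-4\partial_u\boldsymbol{\phi}\cdot\partial_v\boldsymbol{\phi}$;
\item $A_\mu\partial^\mu\boldsymbol{\phi}$, which becomes $A_u\partial_v\boldsymbol{\phi}+A_v\partial_u\boldsymbol{\phi}$ after writing $A_u=\frac12(A_0+A_1)$, $A_v=\frac12(A_0-A_1)$;
\item $A_\mu A^\mu=-4A_uA_v$;
\item the right-hand sides of \eqref{1cso3s2}--\eqref{1cso3s3}, which combine into $\kappa\Box A_u=\partial_u(N|n\times\boldsymbol{\phi}|^2)$ and $\kappa\Box A_v=-\partial_v(N|n\times\boldsymbol{\phi}|^2)$.
\end{itemize}
The last item is the key point. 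It gives $\partial_v A_u\sim N|n\times\boldsymbol{\phi}|^2$ up to a free wave, so $A_u$ is one derivative smoother in $v$ and behaves like a $u$-derivative of an $H^s$-type object. We will therefore place $A_u$ in the space of $\partial_u$ of $X^s$-type functions, and symmetrically for $A_v$. This is consistent with $A\in H^{s-1}$.

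\textbf{Step 1: linear and energy estimates.} First we prove a direct energy estimate in the Keel--Tao space. For $\Box w=F$ with data in $H^s\times H^{s-1}$, the Duhamel solution localized by a smooth cutoff $\chi(t/T)$ should satisfy $\|w\|_{X^s}\lesssim \|(w_0,w_1)\|_{H^s\times H^{s-1}}+T^{\epsilon}\|F\|_{Y^{s}}$. Here $Y^s$ is the space of $\partial_u\partial_v$ applied to $X^s$, with a slight loss of modulation. We also need the corresponding embedding $X^s\hookrightarrow C_tH^s$. The abstract advertises exactly this step: it replaces the finite speed of propagation / small-data reduction of Keel--Tao, so we can handle large data with a short time $T$ depending on the data norm. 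The positive power $T^\epsilon$ comes from trading $b$ against $b'<b$ in the modulation weights.

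\textbf{Step 2: multilinear estimates.} Next we prove the multilinear estimates that close the contraction for $(\boldsymbol{\phi},A_u,A_v,N)$. The basic estimate is the null-form bound
\[
\|\partial_u f\,\partial_v g\|_{Y^s}\lesssim\|f\|_{X^s}\|g\|_{X^s},
\]
valid for $s>1/2$, together with the algebra property $\|fg\|_{X^s}\lesssim\|f\|_{X^s}\|g\|_{X^s}$; both follow from the product structure in $(u,v)$ and the one-dimensional Sobolev algebra $H^{s}$, $s>1/2$. With these in hand we estimate each term:
\begin{itemize}
\item $\boldsymbol{\phi}(\partial_u\boldsymbol{\phi}\cdot\partial_v\boldsymbol{\phi})$ follows from the null-form bound and the algebra property.
\item $A_u\partial_v\boldsymbol{\phi}$ and $A_v\partial_u\boldsymbol{\phi}$ are again of type $\partial_u\cdot\partial_v$ by the representation of $A$ above.
\item $A_uA_v$ is handled in the same way.
\item $\phi_3U(\phi_3,N)$ and the cubic terms are handled by the algebra property.
\item The right-hand side of the $N$ equation \eqref{1cso3s4} is, after expansion, $\partial_u(n\times\boldsymbol{\phi})\cdot\partial_v\boldsymbol{\phi}-\partial_v(n\times\boldsymbol{\phi})\cdot\partial_u\boldsymbol{\phi}$ plus $A$-terms of the same form, together with $F_{01}$, which equals a combination $\partial_uA_v-\partial_vA_u$. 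The bound on $F_{01}$ uses the $A$ equations once more: $\partial_vA_u$ is a nonlinear term plus a free part.
\end{itemize}

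\textbf{Step 3: contraction and constraint propagation.} The estimates of Steps 1--2 give a contraction on a ball in $X^s$ for $T$ small depending on the data. Continuous dependence follows from the same estimates applied to differences, and persistence of regularity gives uniqueness. Finally, the constraint $|\boldsymbol{\phi}|=1$ must be propagated, using $\langle\boldsymbol{\phi}_0,\boldsymbol{\phi}_1\rangle=0$. The quantity $\rho=|\boldsymbol{\phi}|^2-1$ satisfies a linear wave equation with zero data, first for smooth approximations, and then the result passes to the limit. The Lorenz gauge and the identification of $N$ with $-\langle n\times\boldsymbol{\phi},D_1\boldsymbol{\phi}\rangle$ (matching the initial data for $\partial_tN$ in \eqref{1ini2}) are propagated similarly.

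\textbf{Main obstacle.} I expect the main obstacle to be Step 1, the large-data energy estimate in the Keel--Tao space together with the gain $T^\epsilon$. The difficulty is that the time cutoff does not respect the product $(u,v)$ structure, so the commutator of $\chi(t/T)$ with the weights $\langle\xi\rangle^s\langle\eta\rangle^b$ must be controlled directly. The plan is to decompose $\chi(t/T)$ into its Fourier pieces in $(u,v)$ and absorb them via the weight inequality $\langle\eta\rangle^b\lesssim\langle\eta-\eta'\rangle^b\langle\eta'\rangle^b$.
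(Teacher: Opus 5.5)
Your overall architecture matches the paper's. You use null coordinates and the Keel--Tao space $X^{s,b}=H^s_uH^b_v\cap H^s_vH^b_u$. You introduce potentials for the gauge field so that every derivative nonlinearity becomes a sum of products $\partial_uf\,\partial_vg$, and you bound these with Lemma~\ref{uvproduct}. You then use a large-data linear estimate in $X^{s,b}$ with a factor $T^{\epsilon/4}$ (Proposition~\ref{energy1}) and close with a contraction.

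Your potentials are exactly the paper's. With $A_u=\partial_u\beta$ and $A_v=\partial_v\gamma$ one has $\beta=B_1-B_0$ and $\gamma=B_1+B_0$, and $\kappa\Box\beta=N|n\times\boldsymbol{\phi}|^2=-\kappa\Box\gamma$ is \eqref{1wave2}--\eqref{1wave3}. One point needs to be made explicit. The ``free wave'' you allow in $\partial_vA_u$ is a function of $v$ alone with only $H^{s-2}$ regularity. If it were nonzero, neither $A_u\in\partial_uX^{s,b}$ nor your bound on $(\phi_1^2+\phi_2^2)F_{01}$ would hold. It vanishes because the data satisfy the Lorenz gauge and $\kappa F_{01}=N|n\times\boldsymbol{\phi}|^2$ at $t=0$; the paper builds this into $B_\mu(0)=0$, $\partial_tB_0(0)=-a_1$, $\partial_tB_1(0)=a_0$. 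So you should iterate on $(\boldsymbol{\phi},\beta,\gamma,N)$. Then $F_{01}=\kappa^{-1}N(\phi_1^2+\phi_2^2)$ identically, which is how \eqref{1wave4} arises.

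The genuine gap is Step 1, which you rightly call the crux; the mechanism you propose cannot produce a gain.
\begin{itemize}
\item Multiplication by $\chi(t/T)$ shifts both null frequencies by some $|\sigma|\lesssim T^{-1}$. Commuting it past the weights with $\langle\eta\rangle^b\lesssim\langle\eta-\eta'\rangle^b\langle\eta'\rangle^b$ (and the analogue for $s$) therefore costs $\int|\widehat{\chi_T}(\sigma)|\langle\sigma\rangle^{s+b}\,d\sigma\sim T^{-(s+b)}$.
\item This is not merely a crude bound. For $|\xi|\lesssim1$ the $X^{s,b}$ norm is essentially the $H^{s+b}$ norm in $t$. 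There, multiplication by $\chi(t/T)$ has operator norm $\sim T^{\frac12-s-b}$, which is a loss.
\item The Duhamel integral before the cut-off is not even in $X^{s,b}$. The only room left in the modulation exponents after inverting $\Box$ is $\epsilon$, which cannot absorb this loss.
\end{itemize}
What pays for the cut-off is the structure of the Duhamel term, not a multiplier bound. The paper follows Selberg here:
\begin{itemize}
\item Split $F=F_1+F_2$ with $F_1=\eta(T^{1/4}\Lambda_-)F$.
\item On $F_2$, invert $\Box$ directly and gain from $\langle|\tau|-|\xi|\rangle^{-\epsilon}\lesssim T^{\epsilon/4}$.
\item For $F_1$, expand the Duhamel term via Lemma~\ref{w1} into $\sum_j\frac{t^j}{j!}e^{\pm itD}f_j^\pm$, low-frequency pieces $\frac{t^{j+1}}{j!}\int_0^1e^{it(2\rho-1)D}g_j(\rho)\,d\rho$, and remainders $R_\pm$. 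Here $\|f_j^\pm\|_{H^s}$ and $\sup_\rho\|g_j(\rho)\|_{H^{s-1}}$ are $\lesssim c^{j-\frac12}\|F_1\|_{s-1,0}$ with $c\sim T^{-1/4}$.
\item Then use $\|\theta_Tt^je^{\pm itD}f\|_{X^{s,b}}\lesssim\|t^j\theta_T\|_{H^{s+b}}\|f\|_{H^s}\lesssim T^{j+\frac12-s-b}\|f\|_{H^s}$.
\end{itemize}
The explicit powers $t^j$ outweigh both the cost of the cut-off and the growth $c^j$, leaving a net $T^{1/8}$ for $s,b$ near $\frac12$. This is the idea missing from your Step 1.

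A caution if you follow the paper on this point. Its bound for $\theta_T\Box^{-1}F_2$ is really the bound for $\Box^{-1}F_2$ and ignores the cut-off. For $|\xi|\lesssim1$ and modulations between $T^{-1/4}$ and $T^{-1}$, the cut-off costs up to an extra factor of order $T^{\frac12-\frac34(s+b)}\gg1$. This is exactly where $X^{s,b}$ differs from $H^{s,b}$, so that low-frequency region has to be treated separately, e.g.\ directly from the Duhamel formula with $D^{-1}\sin((t-t')D)\approx t-t'$.
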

	
	\begin{theorem}
		\label{main thm}
		For $d=2$ and $s>1$, suppose the initial data in the following Sobolev spaces:
		\[
		\boldsymbol{\phi}(0, \cdot)=\boldsymbol{\phi}_{0} \in H^{s}(\mathbb{R}^{2}), \quad \partial_{t} \boldsymbol{\phi}(0, \cdot)=\boldsymbol{\phi}_{1} \in H^{s-1}(\mathbb{R}^{2}), \quad  A_{\mu}(0, \cdot)=a_{\mu} \in H^{s-\frac{3}{4}}(\mathbb{R}^{2}), 
		\]
		with $\langle \boldsymbol{\phi}_0, \boldsymbol{\phi}_1 \rangle = 0$. Then the initial value problem \eqref{2ini}--\eqref{cso3s2} is locally well-posed. More precisely, there exist a time $T>0$ depending continuously on the data such that the solution satisfies
		\begin{gather*}
			\boldsymbol{\phi} \in C([0, T] ; H^{s}(\mathbb{R}^{2}))  \cap C^1([0, T] ; H^{s-1}(\mathbb{R}^{2})),
			\\
			A_{\mu} \in C([0, T] ; H^{s-\frac{3}{4}}(\mathbb{R}^{2})).
		\end{gather*}
	\end{theorem}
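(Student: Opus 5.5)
The plan is a Picard iteration in wave--Sobolev (Bourgain--Klainerman--Machedon) spaces $H^{s,b}$ adapted to the half-waves $\partial_t\pm|\nabla|$. Take $\boldsymbol{\phi}\in H^{s,\frac12+}$, with $\partial_t\boldsymbol{\phi}\in H^{s-1,\frac12+}$, and $A_\mu\in H^{s-\frac34,\frac12+}$, all on a short time slab.

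\textbf{Step 1: reduction to bilinear estimates.} First I would check that the Lorenz condition and the constraint $|\boldsymbol{\phi}|=1$ propagate for smooth solutions of \eqref{cso3s1}--\eqref{cso3s2}. The condition $\langle\boldsymbol{\phi}_0,\boldsymbol{\phi}_1\rangle=0$ is exactly what this propagation uses. Hence it suffices to solve the wave system. The linear estimate $\|u\|_{H^{s,b}_T}\lesssim \|u(0)\|_{H^s}+\|u_t(0)\|_{H^{s-1}}+T^{\varepsilon}\|\square u\|_{H^{s-1,b-1+\varepsilon}}$ reduces everything to closing multilinear estimates for the right-hand sides. The small power of $T$ makes this work for large data.

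\textbf{Step 2: exposing the null structure.} The term $\partial_\mu\boldsymbol{\phi}\cdot\partial^\mu\boldsymbol{\phi}$ is the standard $Q_0$ form. For the cubic part I would write $\boldsymbol{\phi}\,Q_0(\boldsymbol{\phi},\boldsymbol{\phi})$ and estimate the factor $\boldsymbol{\phi}$ in an algebra-type space, which is available since $s>1=d/2$ (using $L^\infty$ control).

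The term $\epsilon_{\mu\nu\rho}\,n\cdot(\partial^\nu\boldsymbol{\phi}\times\partial^\rho\boldsymbol{\phi})$ is a $Q_{\nu\rho}$ form.

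The key new point is $A_\mu\partial^\mu\boldsymbol{\phi}$ and $\epsilon_{\mu\nu\rho}A^\rho\partial^\nu(\cdot)$. Here I would Hodge-decompose the spatial part $A_j$ into $\partial_j|\nabla|^{-2}\operatorname{div}A+\epsilon_{jk}\partial_k|\nabla|^{-2}\operatorname{curl}A$.
\begin{itemize}
\item The curl part gives $Q_{jk}(|\nabla|^{-2}\operatorname{curl}A,\boldsymbol{\phi})$.
\item For the divergence part together with $A_0$, the Lorenz gauge gives $\operatorname{div}A=\partial_tA_0$. This yields $Q_{0j}$-type forms in $(|\nabla|^{-1}A_0,\boldsymbol{\phi})$.
\item The $\epsilon$-contracted term is treated the same way, using $\operatorname{curl}A=F_{12}$ and the field equation \eqref{2EL2}, which expresses $F$ through $\boldsymbol{\phi}$.
\end{itemize}

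\textbf{Step 3: closing the estimates.} I would then apply the Foschi--Klainerman null-form estimates in $\mathbb R^{1+2}$ for $Q_0$ and $Q_{\alpha\beta}$ at regularities $s>1$, $r=s-\frac34$. The quadratic gauge terms $A_\mu A^\mu\boldsymbol{\phi}$ are handled via Strichartz and Sobolev product estimates in $H^{s,b}$, where the gain $s-\frac34>\frac14$ suffices. Dependence on data and uniqueness then follow from the contraction.

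\textbf{Main obstacle.} I expect the hardest step to be the $A$-equation estimate $\|Q(\boldsymbol{\phi},\boldsymbol{\phi})\|_{H^{s-\frac74,b-1}}$. In the high-high-to-low interaction the output regularity $s-\frac74$ must be compatible with Foschi--Klainerman. Similarly, the estimate $\|A\,\partial\boldsymbol{\phi}\|_{H^{s-1,b-1}}$ with $A$ only at regularity $s-\frac34$ requires that all pieces genuinely carry a null form. I would check these against the known necessary conditions, such as $2s-\frac34>\ldots$.
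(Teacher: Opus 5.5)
There is a genuine gap in your treatment of the quadratic gauge terms $\boldsymbol{\phi}\,A_\mu A^\mu|n\times\boldsymbol{\phi}|^2$ and $A_\mu A^\mu\, n\times(n\times\boldsymbol{\phi})$ in the $\boldsymbol{\phi}$-equation.

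Treating $A_\mu A^\mu$ as a generic product means proving $H^{s-\frac34,b}\cdot H^{s-\frac34,b}\hookrightarrow H^{s-1,b+\epsilon-1}$. In Lemma~\ref{product} this is $(s_0,s_1,s_2)=(1-s,\,s-\frac34,\,s-\frac34)$, and the condition $s_0+s_1+s_2>\frac{d+1}{4}=\frac34$ becomes $s>\frac54$. This is not a weakness of the lemma. Take both factors to be waves at frequency $\lambda$ with Fourier support in the same Knapp plate (size $\lambda\times\lambda^{1/2}$ in $\xi$, within distance $1$ of the cone). This shows that $s_0+s_1+s_2\ge\frac{d+1}{4}$ is necessary for every choice of the $b$'s.

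Strichartz estimates do not help either. Already $A^2\in L^1_tL^2_x$ would require $A\in L^2_tL^4_x$. In $\mathbb{R}^{1+2}$ this costs $\frac38$ of a derivative even locally in time (Knapp again), which is more than the $\frac14+$ you have. So your claim that ``the gain $s-\frac34>\frac14$ suffices'' is false for $1<s\le\frac54$, and the iteration does not close at the stated regularity.

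The missing idea is that $A_\mu A^\mu$ must itself be written as a null form. The paper does this with the auxiliary potential of \eqref{eB}: $\partial_\mu B^\mu=0$ and $\partial_\mu B_\nu-\partial_\nu B_\mu=\epsilon_{\mu\nu\rho}A^\rho$. The Lorenz gauge is exactly the compatibility condition for this system. Then $A^\rho=\epsilon^{\mu\nu\rho}\partial_\mu B_\nu$ with $B\in\mathcal{H}^{s+\frac14,b}$, and $A_\mu A^\mu=Q_0(B_\mu,B^\mu)+2Q_{\mu\nu}(B^\mu,B^\nu)$. After the Klainerman--Selberg reduction, the resulting products (e.g.\ $H^{s-\frac34,b}\cdot H^{s+\frac14-\epsilon,b}\hookrightarrow H^{s-1,b}$) do satisfy Lemma~\ref{product}.

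Your spatial Hodge decomposition only reproduces the null structure of $A^\mu\partial_\mu\boldsymbol{\phi}$. That is the Jin--Zhang observation, equivalent to the second identity in \eqref{nf12}. It does not make $A_\mu A^\mu$ null, because the square $|\nabla^\perp\psi|^2$ of the curl part of $(A_1,A_2)$ is not a null form in $\psi$.

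Conversely, the step you flag as the main obstacle is not one. With output regularity $s-\frac74$, both $\epsilon_{\mu\nu\rho}\,n\cdot(\partial^\nu\boldsymbol{\phi}\times\partial^\rho\boldsymbol{\phi})$ and $\epsilon_{\mu\nu\rho}A^\rho\partial^\nu(\phi_1^2+\phi_2^2)$ close as plain products for $s>1$; the relevant sums are $s-\frac14$ and $s$, both above $\frac34$. So there is no need to invoke \eqref{2EL2}, which the Picard iterates of the wave system would not satisfy anyway.
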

	
	\begin{remark}
		System \eqref{1cso3s1}--\eqref{1cso3s4} and \eqref{cso3s1}--\eqref{cso3s2} is invariant under the scaling
		\begin{equation*}
			\phi(t, x) \rightarrow \phi^\lambda:=\phi(\lambda t, \lambda x), \quad 
			A_\mu \rightarrow A_\mu^\lambda:=\lambda A_\mu(\lambda t, \lambda x), \quad 
			N \rightarrow N^\lambda(t, x):= \lambda N(\lambda t, \lambda x).
		\end{equation*}
		This gives
		\[
		\begin{aligned} 
			&\|\boldsymbol{\phi}^\lambda\|_{\dot{H}^{s_c}(\mathbb{R}^d)}=\lambda^{s_c-\frac{d}{2}}\|\boldsymbol{\phi}\|_{\dot{H}^{s_c}(\mathbb{R}^d)},
			\\ 
			&\|A_\mu^\lambda\|_{\dot{H}^{s_c}(\mathbb{R}^d)}=\lambda^{s_c+1-\frac{d}{2}}\|A_\mu\|_{\dot{H}^{s_c}(\mathbb{R}^d)},
			\\
			&\|N^\lambda\|_{\dot{H}^{s_c}(\mathbb{R}^d)}=\lambda^{s_c+1-\frac{d}{2}}\|N\|_{\dot{H}^{s_c}(\mathbb{R}^d)}.
		\end{aligned}
		\]
		Hence the scaling-inavriant Sobolev regularities are $s_c=\frac{d}{2}$ for $\boldsymbol{\phi}$ and $s_c=\frac{d}{2}-1$ for $A_{\mu}$ and $N$. Then it is expected that local well-posedness holds for $(\boldsymbol{\phi}_0, a_{\mu}, n_0) \in H^{\frac12+}(\mathbb{R}) \times H^{-\frac12+}(\mathbb{R}) \times H^{\frac12+}(\mathbb{R})$ and $(\boldsymbol{\phi}_0, a_{\mu}) \in H^{1+}(\mathbb{R}^2) \times H^{0+}(\mathbb{R}^2)$. Theorem~\ref{main thm 1dim} yields almost scaling-invariant local well-posedness for both $\boldsymbol{\phi}$ and $A_\mu$, while Theorem~\ref{main thm} yields almost scaling critical local well-posedness for $\boldsymbol{\phi}$. 
	\end{remark}
	
	\begin{remark}
		The Sobolev space $H^s$ is not directly applicable to functions taking values in the sphere $\mathbb{S}^2$, since they do not decay at spatial infinity, and consequently such functions do not belong to $L^{2}$. Therefore, when we write  $\boldsymbol{\phi}(t, \cdot) \in H^{s}$, it is understood that  $\boldsymbol{\phi}(t, \cdot)-n \in H^{s}$.
	\end{remark}
	
	\subsection{Main ideas of the proof}
	
	The proofs of Theorems~\ref{main thm 1dim} and~\ref{main thm} follow the same general strategy. Under the Lorenz gauge, we consider the corresponding nonlinear wave systems and establish the linear and nonlinear estimates in suitable function spaces. These estimates enable us to construct the associated solution map and prove that it is a contraction on a suitable complete metric space. The desired local well-posedness then follows from the Banach fixed-point theorem.
	
	More precisely, for the two-dimensional problem, the main difficulty lies in the derivative nonlinearities of the wave system. Our key contribution is to uncover the hidden null structures in the remaining nonlinear interactions. Motivated by the approach of Huh~\cite{Huh2}, we introduce suitable auxiliary vector fields $B_{\mu}$, which allow us to rewrite every derivative nonlinearity in the wave formulation as a null form. This complete null-form decomposition reveals the intrinsic cancellations of the system and reduces the nonlinear analysis to bilinear estimates for null forms. Combined with Selberg's linear estimate in $\mathcal{H}^{s,b}$~\cite{Selberg}, for the linear wave equation
	\begin{equation*}
		\begin{cases}
			\square w = F(t,x), \quad (t,x) \in \mathbb{R}^{1+d}, \\
			w|_{t=0} = f, \quad \partial_t w|_{t=0} = g,
		\end{cases}
	\end{equation*}
	we have
	\begin{equation*}
		|w|_{s, b} \leq C_{0}\left( \|f\|_{H^{s}}+\|g\|_{H^{s-1}}+T^{\frac{\epsilon}{4}}\|F\|_{s-1, b+\epsilon-1}\right) ,
	\end{equation*}
	Together with the bilinear estimates in the wave--Sobolev spaces $H^{s, b}$ for null forms, this enables us to prove well-posedness under relaxed regularity assumptions on the initial data.

	For the one-dimensional problem, the null-form wave formulation is already available, making the main task analytical rather than structural. Since standard product estimates in wave--Sobolev spaces fall short of reaching the critical regularity threshold $s>\frac12$, we adopt the null-coordinate framework of Keel and Tao~\cite{KT} and work in the anisotropic spaces $X^{s,b}$. Crucially, we demonstrate that favorable bilinear estimates hold not only for the $Q_0$-type null forms, but also for the $Q_{10}$-type (see \eqref{Q}, \eqref{Q0}, \eqref{Q1} below). Unlike in~\cite{KT}, however, we do not reduce the problem to the small-data regime via finite propagation speed. Instead, we establish a new linear energy estimate directly in $X^{s,b}$,
	\begin{equation*}
		\|w\|_{X^{s,b}} \leq C_0 \left( \|f\|_{H^s} + \|g\|_{H^{s-1}} + T^{\epsilon/4} \|F\|_{X^{s-1, b+\epsilon-1}} \right),
	\end{equation*}
	which, together with the sharp product estimates for null forms in $X^{s,b}$~\cite{KT}, provides the bilinear estimates needed for the fixed-point argument and yields local well-posedness under relaxed regularity assumptions.

	\subsection{Notations}
	Greek indices, such as $\mu, \nu, \rho $ range from 0 to $d$, while Roman indices such as $ i,j,k $ range from 1 to $d$. Spacetime derivatives are denoted by $ \partial_\alpha = \partial_{x^{\alpha}} $, with $ (t, x) = (x^{\alpha})_{0\leq \alpha \leq d} $. Indices are raised and lowered using the Minkowski metric $g=(g_{\mu\nu})_{(1+d) \times (1+d)}=diag(1, -1, \cdots,  -1)$. We adopt the Einstein summation convention, where we sum over repeated upper and lower indices. Therefore, $\square = \partial_{t}^{2}-\Delta = \partial^{\alpha} \partial_{\alpha}$. 
	
	Throughout the paper, $s+$ denotes $s+\epsilon$ for arbitrarily small  positive constant $\epsilon$. The notations $a \lesssim b$ means $a \leq Cb$ for some positive constant $C$.
	
	We set cut-off functions $\theta$ and $\eta$ satisfying
	\begin{equation}\label{theta}
		\theta \in C_{c}^{\infty}(\mathbb{R}), \quad \theta=1 \text { on }[-1,1], \quad \operatorname{supp} \theta \subseteq(-2,2)
	\end{equation}
	and
	\begin{equation}\label{eta}
		\eta \in C_{c}^{\infty}(\mathbb{R}), \quad \eta=1 \text { on }[-2,2], \quad \operatorname{supp} \eta \subseteq(-4,4).
	\end{equation}
	We write $\theta_T (t) \coloneqq \theta(t/T)$.
	
	For $f \in \mathcal{S}^{\prime}(\mathbb{R}^d)$ and $g \in \mathcal{S}^{\prime}(\mathbb{R}^{1+d}) $, the Fourier transforms $\widehat{\cdot }$ and $\widetilde{\cdot}$ are defined by
	\[
	\widehat{f}(\xi)=\int_{\mathbb{R}^d} \mathrm{e}^{\mathrm{i} x \cdot \xi} f(x) d x, \quad 
	\widetilde{g}(\tau, \xi)=\int_{\mathbb{R}} \int_{\mathbb{R}^d} \mathrm{e}^{\mathrm{i}(t \tau+x \cdot \xi)} g(t, x) d x d t,
	\]
	where $\mathcal{S}^{\prime}$ denotes the space of tempered distributions, the dual of Schwartz space $\mathcal{S}$. For any $g \in \mathcal{S}^{\prime}(\mathbb{R}^{1+d}) $, it makes sense to restrict $g$ to any slice $\{t\} \times \mathbb{R}^d$.  We write $u \precsim v$ denotes $ |\widehat{u}| \lesssim \widehat{v} $.
	
	Let $\Lambda^{\alpha}, \Lambda_{+}^{\alpha}, \Lambda_{-}^{\alpha}$ be the Fourier multipliers given by
	\[
	\widehat{\Lambda^\alpha f}(\xi) = \langle \xi \rangle^{\alpha} \widehat{f}(\xi), \quad
	\widetilde{\Lambda_+^\alpha g}(\tau, \xi) = \langle |\tau| + |\xi| \rangle^{\alpha}\widetilde{g}(\tau, \xi), \quad
	\widetilde{\Lambda_-^\alpha g}(\tau, \xi) = \langle |\tau| - |\xi| \rangle ^{\alpha} \widetilde{g}(\tau, \xi),
	\]
	where $\langle \cdot \rangle = 1 + | \cdot |$. The homogeneous versions of these operators are $D^{\alpha}, D_{+}^{\alpha}, D_{-}^{\alpha}$ with symbols $ |\xi|^{\alpha}, (|\tau|+|\xi|)^{\alpha}, ||\tau|-|\xi||^{\alpha} $ respectively.
	
	For the spatial domain $\mathbb{R}^d$, the standard Sobolev space $H^s(\mathbb{R}^d)$ is defined by 
	$$ 
	H^s(\mathbb{R}^d) = \{ f \in \mathcal{S}'(\mathbb{R}^d) : \|f\|_{H^s} := \|\Lambda^s f\|_{L^2(\mathbb{R}^d)} < \infty \}. 
	$$
	
	For functions on the spacetime domain $\mathbb{R}^{1+d}$, we introduce the wave--Sobolev spaces $H^{s,b}$ and $\mathcal{H}^{s,b}$ equipped with the norms
	\[
	\|\phi\|_{s,b} = \|\Lambda^s \Lambda_-^b \phi\|_{L^2(\mathbb{R}^{1+d})}, 
	\]
	and
	\[
	|\phi|_{s,b} = \|\phi\|_{s,b} + \|\partial_t \phi\|_{s-1,b} \sim \|\Lambda^{s-1} \Lambda_+ \Lambda_-^b \phi\|_{L^2(\mathbb{R}^{1+d})}. 
	\]
	According to Selberg \cite{Selberg}, for $b > \frac{1}{2}$, we have the following continuous embeddings:
	\begin{gather*}
		H^{s,b} 
	 	\hookrightarrow C(\mathbb{R}; H^s(\mathbb{R}^d)),
		\\
		\mathcal{H}^{s,b} 
		\hookrightarrow C(\mathbb{R}; H^s(\mathbb{R}^d)) \cap C^1(\mathbb{R}; H^{s-1}(\mathbb{R}^d)). 
	\end{gather*}
	
	The time-resticted space is defined as
	\[
	\mathcal{H}^{s,b}_T = \{ \phi = \Phi|_{[0,T]\times\mathbb{R}^d} : \Phi \in  \mathcal{H}^{s,b}\},
	\]
	with norm
	\[
	|\phi|_{s, b, T} = \inf \left\{|\Phi|_{s,b} :  \Phi|_{[0,T]\times\mathbb{R}^d} = \phi\right\}.
	\]
	It imbeds into $C([0,T], H^s) \cap C^1([0,T], H^{s-1})$ for $b>\frac12$.
	
	For the one-dimensional case ($d=1$), to obtain the desired estimates, we work in the smaller function space $X^{s,b}$ introduced by Keel and Tao \cite{KT}, rather than the wave--Sobolev spaces. This space is equipped with the norm
	\[
	\|\phi\|_{X^{s,b}} = \|\Lambda_+^s \Lambda_-^b \phi\|_{L^2(\mathbb{R}^{1+1})}. 
	\]
	Similarly, the time-resticted space of $X^{s,b}$ is defined as
	\[
	X^{s,b}_T = \{ \phi = \Phi|_{[0,T]\times\mathbb{R}} : \Phi \in  X^{s,b}\},
	\]
	with norm
	\[
	\|\phi\|_{X^{s,b}_T} = \inf \left\{\|\Phi\|_{X^{s,b}} :  \Phi|_{[0,T]\times\mathbb{R}} = \phi\right\}.
	\]

	\subsection{Organization of the rest paper}
	In Section~\ref{nf}, we identify the null structure of the systems \eqref{1cso3s1}--\eqref{1cso3s4} and \eqref{cso3s1}--\eqref{cso3s2}. Section~\ref{pre} introduces the corresponding energy estimates and bilinear estimates used in the one- and two-dimensional analyses. Section~\ref{proof} is devoted to the proofs of Theorem~\ref{main thm 1dim} and Theorem~\ref{main thm}. Finally, Section~\ref{appendix} contains a detailed derivation of the coupled wave equations \eqref{1cso3s1}--\eqref{1cso3s4} and \eqref{cso3s1}--\eqref{cso3s2} from the Euler--Lagrange equations, together with the proof of Proposition~\ref{energy1}.
	
	\section{Null structure} \label{nf}
	This section reveals the underlying null structure of both the one- and two-dimensional models, which will play a key role in the proofs of the main theorems.
	
	\subsection{Null forms in the 2D model}\label{nf2d}
	The null forms, first introduced by Klainerman \cite{KM3}, are defined by
	\begin{equation}
		\begin{aligned}
			Q_{0}(f, g) 
			& = \partial^{\alpha} f \partial_{\alpha} g,
			\\
			Q_{\alpha \beta}(u, v) 
			& =\partial_{\alpha} f \partial_{\beta} g-\partial_{\beta} f \partial_{\alpha} g
		\label{Q}
	\end{aligned}
	\end{equation}
	for functions $ f(t, x_1, x_2), g(t, x_1, x_2) $. 
	
	As shown in Jin and Zhang \cite{JZ}, the Chern--Simons gauged $O(3)$ sigma model can be written in this form:
	\[
	\begin{split}
		\square \boldsymbol{\phi} 
		& = -\boldsymbol{\phi} \left( Q_0(\boldsymbol{\phi}, \boldsymbol{\phi}) + 2A_\mu \partial^\mu \boldsymbol{\phi} \cdot (n \times \boldsymbol{\phi}) + A_\mu A^\mu |n \times \boldsymbol{\phi}|^2 \right) - 2A^\mu (n \times \partial_\mu \boldsymbol{\phi}) 
		\\
		& \quad - A_\mu A^\mu n \times (n \times \boldsymbol{\phi}) - \frac{1}{\kappa^2} (\phi_3 \boldsymbol{\phi} - n) (1 - \phi_3)^2 (1 + 2\phi_3), 
		\\
		\square A_\mu 
		& = \frac{1}{\kappa} \epsilon_{\mu \nu \rho} \left( Q^{\nu \rho}(\boldsymbol{\phi}, \boldsymbol{\phi} \times n) + A^\rho \partial^\nu \left( (\boldsymbol{\phi} \times (n \times \boldsymbol{\phi})) \cdot n \right) \right),
	\end{split}
	\]
	Through the Riesz transform $R_i = D^{-1} \partial_i$ together with the divergence-free and curl-free decomposition of the vector field $A_{\mu}$, they derive the null structure of the term $A_{\mu}\partial^{\mu}\boldsymbol{\phi}$.
	
	We expect that the terms $A_{\mu}A^{\mu}$ and $\epsilon_{\mu \nu \rho} A^{\rho} \partial^{\nu} \boldsymbol{\phi}$ can be expressed in the null forms as well. To this end, we employ the technique developed by Huh \cite{Huh1}, who introduced auxiliary vector fields $B_{\mu}$ satisfying
	\begin{equation}\label{eB}
		\begin{aligned}
			\partial_{\mu} B^{\mu} 
			& = 0,
			\\
			\partial_{\mu}B_{\nu} - \partial_{\nu}B_{\mu} 
			& = \epsilon_{\mu \nu \rho}A^{\rho},
		\end{aligned}
	\end{equation}
	with initial data $ B_{\mu}(0,x) = c_{\mu} $ and constraint $ \partial_{1}c_{2} - \partial_{2}c_{1} = a_{0}$. Applying $\partial^{\mu}$ to the second equation in \eqref{eB}, we obtain
	\begin{equation}\label{WaveB}
		\square B_{\nu} = \epsilon_{\mu \nu \rho} \partial^{\mu}A^{\rho},
	\end{equation}
	which is a linear wave equation for $B_{\nu}$ with initial data
	\[
	B_{\nu}(0,x) = c_{\nu}, \quad \partial_{t}B_{\nu}(0,x) = \dot{c}_{\nu}
	\]
	satisfying the constraints
	\begin{equation}
		\begin{aligned}
		\dot{c}_0 = \partial_{1}c_{1} + \partial_{2}c_{2}, \quad
		& \partial_{1}c_{2} - \partial_{2}c_{1} = a_{0},  
		\\
		\dot{c}_1 = \partial_{1}c_{0} - a_{2}, \quad 
		& \dot{c}_2 = \partial_{2}c_{0} - a_{1}.
		\label{eB initial}
	\end{aligned}
	\end{equation}
	The solution to \eqref{WaveB}--\eqref{eB initial} is given by
	\begin{equation}
		\begin{aligned}
			B_{\nu} (t, x) = 
			& \frac{1}{2 \pi t} \int_{\mathcal{B}(t, x)} \frac{[c_{\nu} (y) + D c_{\nu} (y) \cdot (y-x) + t \dot{c}_{\nu} (y)]}{\sqrt{t^2 - | y - x |^2}} dy  
			\\
			& + \frac{1}{2 \pi} \iint_{\mathcal{C}(t, x)} \frac{\epsilon_{\mu \nu \rho} \partial^{\mu}A^{\rho} (\tau, y)}{\sqrt{(t - \tau)^2 - | y - x |^2}} dy d\tau,
			\label{B}
		\end{aligned}
	\end{equation}
	where $ \mathcal{B}(t, x) $ denotes the ball in $ \mathbb{R}^{1+2} $ centered at $x$ with radius $t$, and $\mathcal{C}(t, x) = \{(\tau, y) \in \mathbb{R}^{1+2}: 0 \leq \tau \leq t, |y-x| \leq t-\tau \}$ is a cone in $ \mathbb{R}^{1+2} $.
	
	Conversely, we have
	\begin{equation}\label{Arho}
	A^{\rho} = \epsilon^{\mu \nu \rho} \partial_{\mu} B_{\nu}.
	\end{equation}
	Thus, \eqref{B} and \eqref{Arho} establish a one-to-one correspondence between $A_{\nu}$ and $B_{\nu}$.
	
	Next, making use of \eqref{eB}, Huh \cite{Huh1} gave that
	\begin{equation}\label{nf12}
		\begin{aligned}
			A_{\mu}A^{\mu} 
			&= Q_{0}(B_{\mu}, B^{\mu}) + 2Q_{\mu\nu}(B^{\mu}, B^{\nu}),
			\\
			A^{\mu}\partial_{\mu} (n \times \boldsymbol{\phi}) 
			&=  Q_{10}(B_{2}, n \times \boldsymbol{\phi}) + Q_{02}(B_{1}, n \times \boldsymbol{\phi}) + Q_{21}(B_{0}, n \times \boldsymbol{\phi}).
		\end{aligned}
	\end{equation}
	For the term $\epsilon_{\mu\nu\rho}A^{\rho} \partial_{\nu}\boldsymbol{\phi}$, substituting \eqref{Arho} to this term, we obtain
	\[
	\begin{aligned}
		\epsilon_{\mu \nu \rho}A^{\rho}\partial_{\nu}\boldsymbol{\phi} 
		&= \delta_{\mu}^{\alpha} \delta_{\nu}^{\beta}(\partial_{\alpha}B_{\beta})(\partial^{\nu}\boldsymbol{\phi}) - \delta_{\nu}^{\alpha} \delta_{\mu}^{\beta}(\partial_{\alpha}B_{\beta})(\partial^{\nu}\boldsymbol{\phi}) 
		\\
		&= (\partial_{\mu}B_{\nu})(\partial^{\nu}\boldsymbol{\phi}) - (\partial_{\nu}B_{\mu})(\partial^{\nu}\boldsymbol{\phi}).		
	\end{aligned}
	\]
	Under the Lorenz gauge condition and the definition of null forms \eqref{Q}, we obtain
	\begin{equation}\label{nf3}
		\epsilon_{\mu \nu \rho}A^{\rho}\partial_{\nu}\boldsymbol{\phi} =  Q_{\mu\nu}(B^{\nu}, \boldsymbol{\phi}) - Q_{0}(B_{\mu}, \boldsymbol{\phi}).
	\end{equation}
	Substituting \eqref{nf12}--\eqref{nf3} into \eqref{wave1}--\eqref{wave2} transforms the system into wave equations, with nonlinear derivative terms expressed entirely in terms of null forms:
	\begin{align}
		\begin{split}
			\square \boldsymbol{\phi} = 
			& -\boldsymbol{\phi} Q_{0}(\boldsymbol{\phi}, \boldsymbol{\phi}) - 2\boldsymbol{\phi}(Q_{10}(B_{2}, \boldsymbol{\phi}) + Q_{02}(B_{1}, \boldsymbol{\phi}) + Q_{21}(B_{0}, \boldsymbol{\phi})) \cdot (n \times \boldsymbol{\phi}) 
			\\
			& -(Q_{0}(B_{\mu}, B^{\mu}) + 2Q_{\mu\nu}(B^{\mu}, B^{\nu}))(\boldsymbol{\phi}|n \times \boldsymbol{\phi}|^{2} + n \times (n \times \boldsymbol{\phi})) 
			\\
			&-2(Q_{10}(B_{2}, n \times \boldsymbol{\phi}) + Q_{02}(B_{1}, n \times \boldsymbol{\phi}) + Q_{21}(B_{0}, n \times \boldsymbol{\phi})) 
			\\
			&- \kappa^{-2} (\phi_3 \boldsymbol{\phi} - n) (1 - \phi_3)^2 (1 + 2\phi_3),
			\label{wave1}
		\end{split}
		\\
		\begin{split}
			\Box A_{\mu} = 
			& ~ \kappa^{-1}\epsilon_{\mu \nu \rho} Q^{\nu\rho}(\boldsymbol{\phi}, \boldsymbol{\phi} \times n) + 2\phi_1(Q_{\mu\nu}(\phi_1, B^{\nu}) - Q_{0}(\phi_1, B_{\mu})) 
			\\
			& + 2\phi_2(Q_{\mu\nu}(\phi_2, B^{\nu}) - Q_{0}(\phi_2, B_{\mu})). \label{wave2}
		\end{split}
	\end{align}
	with
	\begin{equation}\label{3ini2}
		\boldsymbol{\phi}(0, \cdot)=\boldsymbol{\phi}_{0}, 
		\quad \partial_{t}\boldsymbol{\phi}(0,\cdot)=\boldsymbol{\phi}_1, 
		\quad A_{\mu}(0, \cdot)=a_{\mu}, 
		\quad \partial_{t} A_{\mu}(0, \cdot)=\dot{a}_{\mu}.
	\end{equation}
	
	\subsection{Null forms in the 1D model} \label{nf1d}
	The null forms in $\mathbb{R}^{1+1}$ are given by:
	\[
	Q_0(f, g) = \partial_0 f \partial_0 g - \partial_1 f \partial_1 g,
	\qquad
	Q_1(f, g) = \partial_0 f \partial_1 g - \partial_1 f \partial_0 g,
	\]
	for functions $ f(t, x_1) $ and $ g(t, x_1) $. Huh and Jin \cite{HJ} derived the wave system with null forms by applying the vector fields $B_{\mu}$ as in the two dimensional case and they calculated:
	\begin{align*}
		& A^\mu \partial_\mu (n \times \phi) = Q_0(B_1, n \times \phi) + Q_1(B_0, n \times \phi), 
		\\
		& A_\mu A^\mu = Q_0(B_1, B_1) - Q_0(B_0, B_0) + 2Q_1(B_0, B_1),
	\end{align*}
	Then \eqref{1cso3s1}--\eqref{1cso3s4} can be rewritten as follows:
	\begin{align}
		\begin{split}
			\Box \boldsymbol{\phi} =
			& - \boldsymbol{\phi} ( Q_0(\boldsymbol{\phi}, \boldsymbol{\phi}) + 2 ( Q_0 (B_1, \boldsymbol{\phi}) + Q_1 (B_0, \boldsymbol{\phi}) ) \cdot (n \times \boldsymbol{\phi}) + |n \times \phi|^2 ( Q_0(B_1, B_1) 
			\\
			&- Q_0(B_0, B_0) + 2Q_1(B_0, B_1) ) + \phi_3 U(\phi_3, N) ) + U(\phi_3, N) n
			\\
			&  - 2 (Q_0 (B_1, n \times \boldsymbol{\phi}) + Q_1 (B_0, n \times \boldsymbol{\phi})) 
			\\
			& + (Q_0(B_1, B_1) - Q_0(B_0, B_0) + 2Q_1(B_0, B_1))(n \times (n \times \boldsymbol{\phi})),
			\label{1wave1}
		\end{split}
		\\
		\Box B_0 = 
		& - \kappa^{-1} N(\phi_1^2 + \phi_2^2), 
		\label{1wave2}
		\\
		\Box B_1 =
		& ~ 0, 
		\label{1wave3}
		\\
		\begin{split}
			\Box N =
			& - \kappa^{-1} \left( 2Q_1(\phi_1, \phi_2) + N(\phi_1^2 + \phi_2^2)^2 + 2\phi_2(Q_1(B_1, \phi_2) + Q_0(B_0, \phi_2)) \right.
			\\
			& \left. + 2\phi_1(Q_1(B_1, \phi_1) + Q_0(B_0, \phi_1))\right) ,
		\label{1wave4}
		\end{split}
	\end{align}
	with the initial data
	\begin{align}
		\begin{split}
			& \boldsymbol{\phi}(0, \cdot) = \boldsymbol{\phi}_0, \quad \partial_t \boldsymbol{\phi}(0, \cdot) = \boldsymbol{\phi}_1, 
			\\
			& B_\mu(0, \cdot) = 0, \quad \partial_t B_0(0, \cdot) = - a_1, \quad \partial_t B_1(0, \cdot) = a_0, 
			\\
			& N(0, \cdot) = n_0, \quad \partial_t N(0, \cdot) = n_1,
		\end{split}
		\label{3ini}
	\end{align}
	satisfying the constraint $\partial_1 n_0 + \langle n \times \boldsymbol{\phi}_0, \boldsymbol{\phi}_1 + a_0(n \times \boldsymbol{\phi}_0) \rangle = 0$, and $n_1 = - \langle n \times \boldsymbol{\phi}_0, \partial_1 \boldsymbol{\phi}_0 + a_1(n \times \boldsymbol{\phi}_0) \rangle$.

	\section{Preliminary} \label{pre}
	
	In this section, we collect the fundamental linear and nonlinear estimates that form the analytic core of our local well-posedness arguments. For the two-dimensional case, we work in the wave--Sobolev $\mathcal{H}^{s,b}$ for $s>1, b>\frac12$; for the one-dimensional case, we utilize the specialized $X^{s,b}$ spaces for $s>b>\frac12$. We will first present the linear estimates, followed by the crucial bilinear product estimates.

	\subsection{Estimates for the 2D case}\label{pre2d}
	The following lemma provides the crucial linear energy estimates.
	
	\begin{lemma}[\cite{Selberg}, Theorem~13]\label{energy2}
		Assume $s \in \mathbb{R}, b \in\left(\frac{1}{2}, 1\right), \epsilon \in[0,1-b]$. Consider the Cauchy problem for the linear wave equation
		\begin{equation}\label{linearw}
			\begin{cases}
				\square w=F(t,x), \quad(t, x) \in \mathbb{R}^{1+d}, 
				\\
				\left.w\right|_{t=0}=f,\left.\quad \partial_t w\right|_{t=0}=g,
			\end{cases}
		\end{equation}
		let $f, g$ and $F$ satisfy $f \in H^{s}, g \in H^{s-1}$, and $F \in H^{s-1, b+\epsilon-1}$.
		
		Let $0<T<1$ and define
		\begin{equation}\label{defw}
			w(t)=\theta(t)w_0 + \theta_T(w_1+ w_2),
		\end{equation}
		where
		\begin{equation}\label{defw012}
			\begin{aligned}
				& w_0=\cos (t D) f+D^{-1} \sin (t D) g ,
				\\
				& F_{1}=\eta\left(T^{\frac14} \Lambda_{-}\right) F, \quad F_{2}=\left(1-\eta\left(T^{\frac14} \Lambda_{-}\right)\right) F,
				\\
				& w_1= - \int_{0}^{t} D^{-1} \sin \left(\left(t-t^{\prime}\right) D\right) F_{1}\left(t^{\prime}\right) d t^{\prime} , 
				\\
				& w_2=\square^{-1} F_2,
			\end{aligned}
		\end{equation}
		Then, the function $w$ defined in \eqref{defw}--\eqref{defw012} is the unique solution to \eqref{linearw} on $[0, T] \times \mathbb{R}^d$ such that $w \in C([0, T] ; H^{s}) \cap C^1([0, T] ; H^{s-1})$ and satisfies the following estimate:
		\begin{equation}\label{energy22}
			|w|_{s, b} \leq C_{0}\left( \|f\|_{H^{s}}+\|g\|_{H^{s-1}}+T^{\frac{\epsilon}{4}}\|F\|_{s-1, b+\epsilon-1}\right) ,
		\end{equation}
		where $C_{0}$ only depends on $\theta$ and $b$.
	\end{lemma}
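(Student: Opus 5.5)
We prove the estimate \eqref{energy22} by treating the three pieces of \eqref{defw} separately in the norm $|\cdot|_{s,b}\sim\|\Lambda^{s-1}\Lambda_+\Lambda_-^b\cdot\|_{L^2}$. Before that, we check that $w$ solves \eqref{linearw} on $[0,T]$. Since $\theta=\theta_T=1$ on $[0,T]$, there $w=w_0+w_1+w_2$. By construction $\square w_0=0$ with data $(f,g)$, and $\square w_1=F_1$ with zero data. Moreover $\square w_2=F_2$, where $w_2=\square^{-1}F_2$ is defined on the Fourier side by dividing by $|\xi|^2-\tau^2$. This division is legitimate because $F_2$ is supported where $\langle|\tau|-|\xi|\rangle\gtrsim T^{-1/4}$, away from the cone. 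The data of $w_2$ at $t=0$ need not vanish, so we instead take $w_1$ with data chosen to cancel the traces $(w_2(0),\partial_tw_2(0))$, i.e. we absorb these traces into the homogeneous part. Since the statement fixes the formula, we rather note that it suffices to estimate these traces and add a correcting free wave. Uniqueness in $C([0,T];H^s)\cap C^1([0,T];H^{s-1})$ follows from the energy identity for the linear wave equation applied to the difference of two solutions, using regularization if needed.

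For the homogeneous part, $\theta(t)w_0$ is a sum of $\theta(t)e^{\pm itD}h_\pm$ with $\|h_\pm\|_{H^s}\lesssim\|f\|_{H^s}+\|g\|_{H^{s-1}}$. Its space-time Fourier transform is $\hat\theta(\tau\mp|\xi|)\hat h_\pm(\xi)$. On the support of this transform, $\langle|\tau|-|\xi|\rangle\le\langle\tau\mp|\xi|\rangle$ and $\langle|\tau|+|\xi|\rangle\lesssim\langle\xi\rangle\langle\tau\mp|\xi|\rangle$. Because $\hat\theta$ is Schwartz, the bound $|\theta w_0|_{s,b}\lesssim\|f\|_{H^s}+\|g\|_{H^{s-1}}$ follows.

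For the elliptic part $\theta_Tw_2$, we first bound $w_2$ itself, with no cutoff. Writing $|\xi|^2-\tau^2=(|\xi|-|\tau|)(|\xi|+|\tau|)$ gives
\[
|w_2|_{s,b}\lesssim\|\Lambda^{s-1}\Lambda_-^{b-1}F_2\|_{L^2}\le T^{\epsilon/4}\|F\|_{s-1,b+\epsilon-1},
\]
where the factor $T^{\epsilon/4}$ comes from $\langle|\tau|-|\xi|\rangle^{-\epsilon}\lesssim T^{\epsilon/4}$ on the support of $F_2$. Multiplying by $\theta_T$ is the delicate point. In general, multiplication by $\theta_T$ costs $T^{1/2-b}$ in $H^{s,b}$, which is a loss since $b>\frac12$. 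This cost is only incurred when $\Lambda_-$ is small compared to $T^{-1}$, and $w_2$ lives at $\Lambda_-\gtrsim T^{-1/4}$. We will split $\theta_T=\eta$-smoothed pieces in $\tau$, with frequencies $\lesssim T^{-1/4}$ and larger. The convolution with $\hat\theta_T$, which is concentrated at $|\tau|\lesssim T^{-1}$, shifts $\Lambda_-$ by at most $O(T^{-1})$. Using the rapid decay of $T\hat\theta(T\tau)$ together with the bound $\langle\lambda+\sigma\rangle^b\lesssim\langle\lambda\rangle^b\langle\sigma\rangle^b$, the loss $T^{-b}$ against the gain from $\Lambda_-^{-\epsilon}$ must be balanced. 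I expect this balancing to be the main obstacle. I would try to handle it by putting the factor $\langle\sigma\rangle^b$ on $\theta_T$, which gives $\|\theta_T\|_{H^b_t}\sim T^{1/2-b}$, and then compensating by interpolating the $L^\infty_t$ bound of $w_2$ obtained from its large modulation. If that fails, I would replace $T^{1/4}$ by a better cutoff scale.

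For $\theta_Tw_1$, the forcing $F_1$ has modulation $\lesssim T^{-1/4}$. We expand the Duhamel formula as a Taylor series in $t$, following the standard Kenig–Ponce–Vega argument. Each term has the form $\theta_T(t)\,t^k$ times a free wave, with coefficients bounded by $\|\Lambda^{s-1}\Lambda_-^{b+\epsilon-1}F_1\|$ times $T^{-k/4}$ factors. The cutoff contributes $\|\theta_T t^k\|_{H^b}\lesssim T^{k+1/2-b}$. Summing over $k$, each term carries $T^{3k/4+1/2-b}$, which is at least $T^{1/2-b}$. The modulation restriction $\Lambda_-\lesssim T^{-1/4}$ converts $\Lambda_-^{1-b-\epsilon}$ into the needed power of $T$, and this should give $T^{\epsilon/4}$ once we use $b+\epsilon\le1$. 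Collecting the three bounds yields \eqref{energy22}. Since the statement is Selberg's Theorem~13, one may also simply invoke \cite{Selberg} for the constants.
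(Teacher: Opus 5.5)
You are right that the formula as stated does not reproduce the data, since $(w_2(0),\partial_t w_2(0))\neq 0$ in general. The genuine gap, however, is the step you flag yourself: the bound for $\theta_T w_2$. No balancing argument can close it, because with the split at $\Lambda_-\sim T^{-1/4}$ the bound $|\theta_T w_2|_{s,b}\lesssim T^{\epsilon/4}\|F\|_{s-1,b+\epsilon-1}$ is false.

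To see this, take $\widetilde F(\tau,\xi)=\phi(\xi)\psi(\tau-|\xi|)$, where $\|\phi\|_{L^2}=1$, $\phi$ is supported near $|\xi|=R\gg T^{-1}$, and $\psi$ is a nonnegative bump on $[M,2M]$ with $M=4T^{-1/4}$, so that $F_1=0$. Since $MT\ll 1$, on $\operatorname{supp}\theta_T$ one has $\widehat{w_2}(t,\xi)\approx e^{-\mathrm{i}t|\xi|}\widehat{w_2}(0,\xi)$, with $|\widehat{w_2}(0,\xi)|\sim R^{-1}|\phi(\xi)|$. Hence $|\theta_T w_2|_{s,b}\sim R^{s-1}\|\theta_T\|_{H^b}\sim R^{s-1}T^{\frac12-b}$, whereas $\|F\|_{s-1,b+\epsilon-1}\sim R^{s-1}M^{b+\epsilon-\frac12}$. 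The ratio is $T^{\frac34(\frac12-b)+\frac{\epsilon}{4}}$, which is $\gg T^{\epsilon/4}$ because $b>\frac12$.

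The missing idea is \emph{not} to localize $w_2$ at scale $T$ at all. Use $w_2$ itself, which coincides with $\theta_T w_2$ on $[0,T]$. Your symbol computation already bounds it by $T^{\epsilon/4}\|F\|_{s-1,b+\epsilon-1}$, and this is in effect how the paper's appendix treats this piece. Then let $w_0$ carry the corrected data $(f-w_2(0),\,g-\partial_t w_2(0))$. Its $H^s\times H^{s-1}$ norm is $\lesssim |w_2|_{s,b}$ by the embedding $\mathcal{H}^{s,b}\hookrightarrow C(\mathbb{R};H^s)\cap C^1(\mathbb{R};H^{s-1})$ for $b>\frac12$. Your fallback of moving the cutoff to $\Lambda_-\sim T^{-1}$ would also make $\theta_T w_2$ harmless, but it changes the construction and the bookkeeping for $w_1$.

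Two further steps need repair.

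First, in the homogeneous part the bound $\|h_\pm\|_{H^s}\lesssim \|f\|_{H^s}+\|g\|_{H^{s-1}}$ fails. The reason is that $h_\pm$ contains $\pm\frac{1}{2\mathrm{i}}D^{-1}g$, and $|\xi|^{-1}\widehat g$ need not be square integrable near $\xi=0$. As in Proposition~\ref{w0}, split $g$ at $|\xi|=1$ and write the low-frequency piece as $\theta(t)D^{-1}\sin(tD)g_1=\int_0^1 t\theta(t)e^{\mathrm{i}(2\rho-1)tD}g_1\,d\rho$, so that only norms of $t\theta$ enter.

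Second, your treatment of $\theta_T w_1$ does not close as written.
\begin{enumerate}
\item On $\Lambda_-\lesssim T^{-1/4}$, passing from $\Lambda_-^{b+\epsilon-1}$ to $\Lambda_-^{0}$ is a \emph{loss} of $T^{-(1-b-\epsilon)/4}$, not a gain. The positive power of $T$ must therefore come entirely from $\|t^k\theta_T\|$, and you need to track it.
\item A Taylor expansion in $t(\tau\mp|\xi|)$ is only effective on the same sheet of the cone as the half-wave. The cross-sheet contributions (the terms $R_\pm$ in Lemma~\ref{w1}) are not covered by your argument.
\item The same holds for the frequencies $|\xi|<c$, where $D^{-1}\sin(tD)$ must not be split into half-waves.
\end{enumerate}
Both of these require the separate estimates of Lemma~\ref{w1} and Proposition~\ref{ew1}.
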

	
	\begin{remark}
		For $\epsilon=0$ or $\epsilon \in(0,1-b]$, we refer the reader to Selberg's paper \cite{Selberg} Theorem 12 and Theorem 13 respectively. 
	\end{remark}
	
	\begin{remark}\label{pe}
		We emphasize that while the linear estimate is performed in the global wave--Sobolev spaces $\mathcal{H}^{s,b}$ over $\mathbb{R}^{1+d}$, the actual local-in-time solution to the Cauchy problem \eqref{linearw} is well-posed in the time-restricted spaces $\mathcal{H}_T^{s,b}$. By Selberg's embedding lemma, the solutions obtained in these restricted spaces naturally belong to $C([0,T]; H^s) \cap C^1([0,T]; H^{s-1})$ for $b > \frac{1}{2}$. Notably, the time localization extracts a small parameter $T^{\epsilon}$, which acts as the crucial contraction factor in our subsequent Picard iteration scheme.
	\end{remark}

	To bound the nonlinear terms appearing in the right-hand side of the linear estimates, we require bilinear embeddings of the form $H^{s_1,b_1} \cdot H^{s_2,b_2} \hookrightarrow H^{-s_0,-b_0}$, which imply
	\[
	\|f g\|_{-s_{0},-b_{0}} \leq C \|f\|_{s_{1}, b_{1}}\|g\|_{s_{2}, b_{2}} \text { for all } f, g \in \mathcal{S}(\mathbb{R}^{1+d}),
	\]
	where $C$ depends on the indices $s_{\alpha}, b_{\alpha}$ and the dimension $d$. We say that the exponent matrix
	\[
	\left(\begin{array}{lll}
	s_{0} & s_{1} & s_{2} 
	\\
	b_{0} & b_{1} & b_{2}
	\end{array}\right)
	\]
	is a product if inequality holds. The following fundamental product estimates in $H^{s,b}(\mathbb{R}^{1+d})$, for $d=1,2$, were established in \cite{AFS1}.
	
	\begin{lemma}[\cite{AFS1}, Theorem~5.1, Theorem~7.1]\label{product}
		Assume
		\begin{align*}
			& b_{0} + b_{1}+b_{2} > \frac{1}{2},
			\\
			& b_{0} + b_{1} \geq 0, 
			\\
			& b_{1} + b_{2} \geq 0,
			\\
			& b_{0} + b_{2} \geq 0,
			\\
			& s_{0} + s_{1} + s_{2} > \frac{d+1}{2} - (b_{0}+b_{1}+b_{2}),
			\\
			& s_{0} + s_{1} + s_{2} > \frac{d}{2} - \min (b_{0}+b_{1}, b_{0}+b_{2}, b_{1}+b_{2}),
			\\
			& s_{0} + s_{1} + s_{2} > \frac{d-1}{2} - \min (b_{0}, b_{1}, b_{2}), 
			\\
			& s_{0}+s_{1} + s_{2} > \frac{d+1}{4}, 
			\\
			& (s_{0}+b_{0}) + 2 s_{1} + 2 s_{2} > \frac{d}{2},
			\\
			& 2 s_{0} + (s_{1} + b_{1}) + 2 s_{2} > \frac{d}{2},
			\\
			& 2 s_{0} + 2 s_{1} + (s_{2}+b_{2}) > \frac{d}{2},
			\\
			& s_{0} + s_{1} \geq \max (0,-b_{2}), 
			\\
			& s_{1} + s_{2} \geq \max (0,-b_{0}), 
			\\
			& s_{0} + s_{2} \geq \max (0,-b_{1}).
		\end{align*}
		Then
		\[
		\left(\begin{array}{lll}
		s_{0} & s_{1} & s_{2} 
		\\
		b_{0} & b_{1} & b_{2}
		\end{array}\right)
		\]
		is a product.
	\end{lemma}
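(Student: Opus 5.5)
The statement to prove is Lemma~\ref{product}, the product estimate of D'Ancona--Foschi--Selberg in $H^{s,b}(\mathbb{R}^{1+d})$ for $d=1,2$.

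\emph{Step 1: reduction to a trilinear form.} By duality and Plancherel, the inequality $\|fg\|_{-s_0,-b_0}\lesssim\|f\|_{s_1,b_1}\|g\|_{s_2,b_2}$ is equivalent to a bound for a trilinear integral over the convolution hyperplane $X_0+X_1+X_2=0$, where $X_j=(\tau_j,\xi_j)$. Writing $u_j=\langle\xi_j\rangle^{s_j}\langle|\tau_j|-|\xi_j|\rangle^{b_j}|\widetilde{F_j}|$ with $u_j\ge0$ in $L^2$, the weight is
\[
\prod_{j=0}^{2}\langle\xi_j\rangle^{-s_j}\langle|\tau_j|-|\xi_j|\rangle^{-b_j}.
\]
The target is then
\[
\int u_0u_1u_2\,\mathrm{d}\sigma\lesssim\prod_j\|u_j\|_{L^2}.
\]

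\emph{Step 2: dyadic decomposition.} Decompose dyadically in the frequencies $N_j\sim\langle\xi_j\rangle$ and the modulations $L_j\sim\langle|\tau_j|-|\xi_j|\rangle$. Each localized piece is bounded by
\[
N_0^{-s_0}N_1^{-s_1}N_2^{-s_2}L_0^{-b_0}L_1^{-b_1}L_2^{-b_2}\;\mathcal{I}(N,L),
\]
where $\mathcal{I}(N,L)$ is the best constant in the trilinear estimate for $L^2$ functions supported in the dyadic thickened cones $K_{N_j,L_j}$. The ingredient I would take as the core input is the sharp $L^2$ bilinear bound for such pieces:
\[
\mathcal{I}\lesssim\bigl(N_{\min}^{0,12}\,L_{\min}^{0,12}\bigr)^{1/2}\cdot(\text{an angular factor}),
\]
with refined forms in the $(+,\pm)$ sign interactions. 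These bounds come from Cauchy--Schwarz and a measure estimate for the intersection of a thickened cone with a translate of another thickened cone, using $|\tau_0\pm|\xi_0||\lesssim \max L_j+\min(N_j)\,\angle(\pm\xi_1,\pm\xi_2)$. The underlying dichotomy is the frequency relation: either $N_0\ll N_1\sim N_2$ (high--high to low), or $N_{\max}\sim N_{\mathrm{med}}$ in general.

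\emph{Step 3: summation, case by case.} Sum over the dyadic scales in each configuration.
\begin{itemize}
\item The conditions $b_0+b_1+b_2>\tfrac12$ and the pairwise sums $b_i+b_j\ge0$ make the modulation sums converge. The hypothesis $\sum s_j>\tfrac{d+1}{2}-\sum b_j$ arises when all modulations are comparable.
\item The condition $\sum s_j>\tfrac{d}{2}-\min(b_i+b_j)$ and the one with $\tfrac{d-1}{2}-\min b_j$ handle a single large modulation, which is controlled by the cone geometry (Step 2 with the transversality gain).
\item The condition $\sum s_j>\tfrac{d+1}{4}$ is the Klainerman--Machedon/Foschi--Klainerman type endpoint for the high--high interaction with all modulations small. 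This is where the sharp null-free bilinear estimate $\mathcal{I}\lesssim N_{\min}^{(d-1)/4}L^{1/2}\cdots$ enters.
\item The mixed conditions $(s_j+b_j)+2s_k+2s_l>\tfrac d2$ treat the case where one modulation dominates and equals its frequency, $L_j\sim N_j$, far from the cone.
\item The conditions $s_i+s_j\ge\max(0,-b_k)$ handle the low--high interactions, where the sum reduces to a Sobolev product; equality is allowed because only a single dyadic sum appears there.
\end{itemize}

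\emph{Main obstacle.} The hardest part is Step 2: establishing the sharp thickened-cone bilinear $L^2$ bounds uniformly in all sign combinations and in the relative sizes of $N_j$ and $L_j$, particularly in the $d=2$ high--high $(+,-)$ interaction with small modulations. I would attack it first via the angular decomposition and a Whitney-type splitting of angles. Since the statement is precisely \cite{AFS1}, Theorems~5.1 and~7.1, in practice I would invoke that result directly rather than reprove it.
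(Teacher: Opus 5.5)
Your proposal ends up where the paper does: the paper gives no proof of this lemma and simply quotes it from D'Ancona--Foschi--Selberg \cite{AFS1} (Theorems 5.1 and 7.1), so invoking that result, as you do at the end, is the entire argument. Your dyadic outline is only a heuristic account of where the hypotheses come from and should be presented as motivation rather than proof; it is also loose in places, e.g.\ the $\min(b_i+b_j)$ condition reflects two large modulations with the third wave on the cone, not a single large modulation.
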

		
	\subsection{Estimates for the 1D case}
	
	We now turn to the one-dimensional case. The proof of the one-dimensional well-posedness result of Huh and Jin~\cite{HJ} relies crucially on the product estimate stated in Lemma~\ref{product} for $d=1$. However, this product estimate is insufficient to establish the bilinear estimates required at the lower regularity threshold $s>\frac12$. To overcome this difficulty, we adopt the null-coordinate framework of Keel and Tao~\cite{KT} in the one-dimensional case.
	
	Following~\cite{KT}, we work in the space $X^{s,b}$. This choice is dictated by the nonlinear analysis. Indeed, the product estimates available in the null-coordinate framework apply directly to the null-form nonlinearities arising in our system, thereby yielding the bilinear estimates needed at the regularity threshold $s>\frac12$. 
	
	Our first ingredient is a new linear energy estimate adapted to $X^{s,b}$. Unlike the approach of Keel and Tao~\cite{KT}, which first reduces the problem to the small-data regime via finite propagation speed, we derive this estimate directly in $X^{s,b}$.
	
	\begin{proposition} \label{energy1}
		Assume $s > \frac12, b \in\left(\frac{1}{2}, 1\right), \epsilon \in[0,1-b]$, let $f, g$ and $F$ satisfy $f \in H^{s}, g \in H^{s-1}$, and $F \in X^{s-1, b+\epsilon-1}$, and \eqref{defw}--\eqref{defw012} hold, then $w$ is the unique to \eqref{linearw} on $[0, T] \times \mathbb{R}^d$ such that $w \in C([0, T] ; H^{s}) \cap C^1([0, T] ; H^{s-1})$ and satisfies the following estimate:
		\begin{equation}\label{energy12}
			\|w\|_{X^{s, b}} \leq C_{0}(\|f\|_{H^{s}}+\|g\|_{H^{s-1}}+T^{\frac{\epsilon}{4}}\|F\|_{X^{s-1, b+\epsilon-1}}),
		\end{equation}
		where $C_{0}$ only depends on the cut-off function $\theta$ and $b$.
	\end{proposition}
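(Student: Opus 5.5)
Since the solution formula \eqref{defw}--\eqref{defw012} is the same as in Lemma~\ref{energy2}, uniqueness and the continuity $w\in C([0,T];H^s)\cap C^1([0,T];H^{s-1})$ follow verbatim from Selberg's argument. Only \eqref{energy12} is new. The plan is to estimate the three pieces $\theta w_0$, $\theta_T w_1$, $\theta_T w_2$ separately in $X^{s,b}$. Throughout we use that in $d=1$ the weight $\Lambda_+^s$ differs from $\Lambda^s$ only by the factor $\langle|\tau|+|\xi|\rangle^s/\langle\xi\rangle^s\sim \langle \xi\rangle^{-s}\langle |\tau|+|\xi|\rangle^{s}$. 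On the region $\langle|\tau|-|\xi|\rangle\lesssim\langle\xi\rangle$ we have $\Lambda_+\sim\Lambda$, so $X^{s,b}$ and $H^{s,b}$ are comparable there. Off this region $\Lambda_+\sim\Lambda_-$, so the $X^{s,b}$ norm behaves like $\|\Lambda_-^{s+b}\cdot\|_{L^2}$. The strategy is therefore to split each piece into a near-cone part ($\Lambda_-\lesssim\Lambda$), where we invoke Lemma~\ref{energy2} (more precisely the component bounds in Selberg's proof), and a far-cone part, handled by direct computation.

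For the free part, write $\widetilde{\theta w_0}(\tau,\xi)=\sum_\pm \hat\theta(\tau\mp|\xi|)\,\hat h_\pm(\xi)$ with $\|h_\pm\|_{H^s}\lesssim\|f\|_{H^s}+\|g\|_{H^{s-1}}$. Then
\[
\|\theta w_0\|_{X^{s,b}}^2\lesssim\sum_\pm\int \langle|\tau|+|\xi|\rangle^{2s}\langle|\tau|-|\xi|\rangle^{2b}|\hat\theta(\tau\mp|\xi|)|^2|\hat h_\pm|^2\,d\tau d\xi .
\]
Putting $\sigma=\tau\mp|\xi|$, we have $\langle|\tau|+|\xi|\rangle\lesssim\langle\xi\rangle\langle\sigma\rangle$ and $\langle|\tau|-|\xi|\rangle\le\langle\sigma\rangle$. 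The Schwartz decay of $\hat\theta$ absorbs $\langle\sigma\rangle^{2s+2b}$, giving the bound $\|h\|_{H^s}^2$.

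For the Duhamel parts we argue as follows.
\begin{itemize}
\item For $\theta_T w_2=\theta_T\square^{-1}F_2$: on the support of $F_2$ we have $\Lambda_-\gtrsim T^{-1/4}$, and $\square^{-1}$ has symbol $(\tau^2-\xi^2)^{-1}$, whose size is $\langle|\tau|+|\xi|\rangle^{-1}\langle|\tau|-|\xi|\rangle^{-1}$. Hence $\|\square^{-1}F_2\|_{X^{s,b}}\le T^{\epsilon/4}\|F\|_{X^{s-1,b+\epsilon-1}}$, using the gain $\Lambda_-^{-\epsilon}\le T^{\epsilon/4}$. Multiplication by $\theta_T$ must then be shown bounded on $X^{s,b}$ uniformly in $T<1$.
\item For $\theta_T w_1$, expand $\sin((t-t')D)$ as in Selberg's proof: $w_1$ is a sum of free waves with data built from $\int \widetilde F_1(\lambda\pm|\xi|,\xi)\frac{e^{it\lambda}-1}{\lambda}\,d\lambda$. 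After Taylor expansion of $e^{it\lambda}$ (legitimate since $|\lambda|\lesssim T^{-1/4}$ on $\operatorname{supp}F_1$), the piece is controlled by the free-solution argument above, with $T$-localized time cutoffs $t^k\theta_T(t)$.
\end{itemize}
The key point is that $\Lambda_+^s$ acting on $\theta_T\cdot(\text{free wave})$ costs $\langle\sigma\rangle^{s}$ in the modulation variable. Since $\widehat{\theta_T}$ lives at scale $T^{-1}$, this produces a factor $T^{-s}$ in the far region.

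The main obstacle is the uniform-in-$T$ boundedness of multiplication by $\theta_T$ on $X^{s,b}$. The $\Lambda_+^s$ weight, unlike $\Lambda^s$, sees the time frequency, so time localization at scale $T$ can lose powers of $T^{-1}$. For $H^{s,b}$ this is harmless, which is why Lemma~\ref{energy2} holds. In $X^{s,b}$ we would try to show that the far region $\Lambda_-\gg\Lambda$ contributes only through $\|\Lambda_-^{s+b}\theta_T u\|_{L^2}$. We then control this by $\|\Lambda_-^{s+b}u\|$ via the one-dimensional fractional Leibniz/commutator estimate in $t$, and trade the lost powers of $T$ against the spare $T^{\epsilon/4}$ and the restriction $\epsilon\le1-b$. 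If this fails for general $s$, the fallback is to prove \eqref{energy12} first for $s$ near $b$ and use the commutation of $\Lambda^{s-b}$ (spatial) with $\theta_T$ to lift to higher $s$, since $\Lambda_+^{s}\lesssim\Lambda^{s-b}\Lambda_+^{b}$ fails only in the far region, where the previous bound applies.
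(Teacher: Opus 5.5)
\paragraph{Where the gap is.} You use the same decomposition as the paper's appendix: $\theta w_0$ through the substitution $\sigma=\tau\mp|\xi|$, $w_1$ through Selberg's expansion into free waves with cutoffs $t^j\theta_T$, and $w_2$ through the symbol of $\square^{-1}$. However, your argument stops at the one step that carries the content, and the remedies you sketch for it cannot work. Multiplication by $\theta_T$ is not uniformly bounded on $X^{s,b}$, and the loss cannot be paid for by $T^{\epsilon/4}$.

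Take $f=g=0$ and let $\widetilde F$ be a unit bump at $(\tau,\xi)\approx(10T^{-1/4},1)$. Then $F_1=0$, so $w=\theta_T\square^{-1}F$, and $|\widetilde{w_2}|\sim T^{1/2}$ on that bump. Convolution with $\widehat{\theta_T}(\sigma)=T\widehat\theta(T\sigma)$ spreads this mass to $|\tau|\sim T^{-1}$, where the $X^{s,b}$ weight is $\sim|\tau|^{s+b}$. Hence $\|w\|_{X^{s,b}}\sim T^{1-(s+b)}$, whereas $T^{\epsilon/4}\|F\|_{X^{s-1,b+\epsilon-1}}\sim T^{(2-s-b)/4}$. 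The ratio is $T^{\frac12-\frac34(s+b)}\to\infty$ because $s+b>1$, whatever $\epsilon$ is.

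This loss sits at spatial frequency $O(1)$, so your fallback of lifting in $s$ with $\Lambda^{s-b}$ does not touch it. The paper does not supply the missing idea either. It asserts $\theta_Tw_2\lesssim T^{\epsilon/4}\Lambda_+^{-1}\Lambda_-^{\epsilon-1}F$, which is true for $w_2$ but silently drops the convolution with $\widehat{\theta_T}$.

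The natural repair for this piece is a scale-one cutoff: $\theta w_2$ agrees with $\theta_T w_2$ on $[0,T]$ and costs only a constant in $X^{s,b}$. For $w_1$, the loss from $\theta_T$ is paid by the vanishing factors $t^j$ with $j\ge1$, since $\|t^j\theta_T\|_{H^{s+b}}\lesssim T^{j+\frac12-s-b}$. That is the mechanism behind Proposition~\ref{ew1}. Your sketch does not identify it, and it only works for $s+b$ below some threshold.

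\paragraph{The opening claim also fails.} You cannot import uniqueness and continuity ``verbatim'' from Selberg. For $s<1$ one has $\Lambda_+^{s-1}\le\Lambda^{s-1}$, so $X^{s-1,b+\epsilon-1}\not\subset H^{s-1,b+\epsilon-1}$, and Lemma~\ref{energy2} does not apply to $F$.

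This is not cosmetic. At spatial frequency $O(1)$ the $X^{s-1,b+\epsilon-1}$ norm is an $H^{s+b+\epsilon-2}_t$ norm, so for $s+b+\epsilon<\frac32$ the space contains $\delta(t)\chi(x)$. Concretely, take $\psi\ge0$ with $\operatorname{supp}\psi\subset[1,2]$ and $\int\psi=1$, and take $\widehat\chi$ supported in $|\xi|\le1$.
\begin{itemize}
\item The forcings $n\psi(nt)\chi-n\psi(-nt)\chi$ tend to $0$ in $X^{s-1,b+\epsilon-1}$ by dominated convergence.
\item The corresponding zero-data solutions on $[0,T]$ converge to $D^{-1}\sin(tD)\chi$, whose $H^s$ norm at $t=T$ is $\gtrsim T\|\chi\|_{L^2}$.
\item Since $X^{s,b}\hookrightarrow C_tH^s$ for $b>\frac12$, no $X^{s,b}$ extension of these solutions can satisfy \eqref{energy12}.
\end{itemize}
So for $s+b+\epsilon<\frac32$ the statement is false however the cutoffs are arranged. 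This range includes $s,b$ near $\frac12$ with small $\epsilon$. A correct version needs stronger hypotheses on $\epsilon$ or a smaller forcing space.

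A smaller point: in your free part, $D^{-1}g\notin H^s$ at $|\xi|<1$, so $h_\pm$ is not bounded as you claim. The paper handles low frequencies separately via $D^{-1}\sin(tD)g_1=\int_0^1 t\,e^{i(2\rho-1)tD}g_1\,d\rho$.
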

	
	\begin{remark}
		An important feature of Proposition~\ref{energy1} is that the estimate is established without relying on finite propagation speed. This estimate provides the linear ingredient required for carrying out the Picard iteration entirely within the $X^{s,b}$ framework. We defer the proof to Appendix~\ref{proof 3.1}, where it is obtained by a minor modification of Theorem~12 in Selberg~\cite{Selberg}.
	\end{remark}
	
	\begin{remark}
		We emphasize that while the estimate is performed in the global $X^{s,b}$ over $\mathbb{R}^{1+d}$, the actual local-in-time solution to the Cauchy problem \eqref{linearw} is well-posed in the time-restricted spaces $X^{s,b}_T$.
	\end{remark}

	To estimate the nonlinear terms, we introduce the standard null coordinates:
	$$ 
	u = x+t, \quad v = x-t. 
	$$
	Under this change of variables, the derivatives transform as $\partial_x = \partial_u + \partial_v$ and $\partial_t = \partial_u - \partial_v$. The wave operator factors as $\square = -4\partial_u \partial_v$, and the standard null forms decouple into terms:
	\begin{align*}
		Q_0(f,g) 
		&= \partial_t f \partial_t g - \partial_x f \partial_x g = -2(\partial_u f \partial_v g + \partial_v f \partial_u g), 
		\\
		Q_1(f,g) 
		&= \partial_t f \partial_x g - \partial_x f \partial_t g = 2(\partial_u f \partial_v g - \partial_v f \partial_u g).
	\end{align*}
	Taking advantage of the null coordinates, we can rewrite the $X^{s,b}$ norms in terms of product Sobolev spaces $ H_u^{s_1}H_v^{s_2} = H_v^{s_2}H_u^{s_1} $ defined by
	\[
	\| \boldsymbol{\phi} \|_{H_u^{s_1}H_v^{s_2}} = \| \Lambda_u^{s_1} \Lambda_v^{s_2} \boldsymbol{\phi} \|_{L^2(\mathbb{R}^{1+1})},
	\]
	where \( \Lambda_u \) and \( \Lambda_v \) are the Fourier multipliers corresponding to \( \langle \mu \rangle \), \( \langle \nu \rangle \), respectively, and \( \mu, \nu \) are the frequency variables dual to \( u, v \). We define the one-dimensional Sobolev spaces \( H_u^s, H_v^s \) in the usual manner. By Plancherel’s theorem, one can easily verify that
	\begin{equation}\label{X}
		X^{s,b} = H_u^s H_v^b \cap H_v^s H_u^b
	\end{equation}
	when \( b \leq s \). Thus, bounding the null forms in the $X^{s,b}$ framework reduces to establishing product estimates in these $H^{s_1}_u H^{s_2}_v$ spaces. For this purpose, we rely on the following fundamental estimates established by Keel and Tao \cite{KT}.
	\begin{lemma}[\cite{KT}, Lemma~3.3]\label{uvproduct}
		If \( s_1, s_2 > 1/2 \), and \( s_1 \geq s_1' \geq -s_1 \), \( s_2 \geq s_2' \geq -s_2 \), then 
		\begin{align*}
			\|f g\|_{H_u^{s_1^{\prime}} H_v^{s_2^{\prime}}} 
			& \lesssim \|f\|_{H_u^{s_1} H_v^{s_2}} \|g\|_{H_u^{s_1^{\prime}} H_v^{s_2^{\prime}}},
			\\
			\|f g\|_{H_u^{s_1^{\prime}} H_v^{s_2^{\prime}}} 
			& \lesssim \|f\|_{H_u^{s_1^{\prime}} H_v^{s_2}} \|g\|_{H_u^{s_1} H_v^{s_2^{\prime}}}
		\end{align*}
		for all test functions \( f, g \).
	\end{lemma}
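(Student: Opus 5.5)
The statement immediately above is Lemma~\ref{uvproduct}, the product estimate of Keel and Tao in the spaces $H_u^{s_1}H_v^{s_2}$. The plan is to reduce it, via the tensor-product structure of these spaces, to two one-dimensional product estimates, one in each null variable, and then iterate. In the variables $(u,v)$ the multiplier $\Lambda_u^{s_1}\Lambda_v^{s_2}$ factors, and multiplication in physical space becomes convolution in $(\mu,\nu)$. So it suffices to prove the one-dimensional bounds
\[
\|fg\|_{H^{\sigma'}(\mathbb{R})}\lesssim \|f\|_{H^{\sigma}(\mathbb{R})}\|g\|_{H^{\sigma'}(\mathbb{R})},\qquad \sigma>\tfrac12,\ -\sigma\le\sigma'\le\sigma,
\]
in a form that survives tensorization. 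It is convenient to state them as weighted convolution estimates on the Fourier side with nonnegative functions, using the notation $u\precsim v$: bound $\widehat{fg}$ by $|\widehat f|*|\widehat g|$ and replace $\widehat f,\widehat g$ by their absolute values.

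\textbf{One-dimensional estimate.} For $0\le\sigma'\le\sigma$ I would use $\langle\xi\rangle^{\sigma'}\lesssim\langle\xi-\eta\rangle^{\sigma'}+\langle\eta\rangle^{\sigma'}$. This splits the product into two pieces. In the first, $\langle\cdot\rangle^{\sigma'}$ falls on $g$, and the piece is controlled by $\|\widehat f\|_{L^1}\|g\|_{H^{\sigma'}}$ (Young), with $\|\widehat f\|_{L^1}\lesssim\|f\|_{H^\sigma}$ since $\sigma>\frac12$. In the second, $\langle\cdot\rangle^{\sigma'}$ falls on $f$; here I would use Hölder and Sobolev embedding, distributing the remaining $\sigma-\sigma'$ derivatives on $f$ and the $\sigma'$ on $g$ so that the total exceeds $\frac12$. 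For $-\sigma\le\sigma'<0$ I would argue by duality: pairing with $h\in H^{-\sigma'}=H^{|\sigma'|}$ reduces the claim to $\|fh\|_{H^{|\sigma'|}}\lesssim\|f\|_{H^\sigma}\|h\|_{H^{|\sigma'|}}$, which is the case already done. The second (mixed) estimate, $\|fg\|_{H^{\sigma'}}\lesssim\|f\|_{H^{\sigma'}}\|g\|_{H^{\sigma}}$, is the same statement with the roles of $f$ and $g$ swapped.

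\textbf{Tensorization.} Next I would iterate in $u$ and then in $v$. The first inequality should follow by applying the one-dimensional bound in $\mu$ for each fixed $\nu$-configuration and then in $\nu$. In the convolution formulation this amounts to integrating the weight inequality in each variable separately, so the constants multiply. The second inequality is where the mixed structure appears: $f$ carries the high regularity $s_2$ in $v$ but only $s_1'$ in $u$, while $g$ carries $s_1$ in $u$. Accordingly, in the $u$-variable I would apply the mixed 1D estimate with $g$ as the ``$H^\sigma$'' factor, and in the $v$-variable the standard one with $f$ as the ``$H^\sigma$'' factor.

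\textbf{Main obstacle.} The delicate point is justifying the iterated application. The one-dimensional estimates must be stated as bounds on a bilinear convolution form with arbitrary nonnegative $L^2$ functions. Then, at fixed $\nu,\nu_1$, the $\mu$-integral is bounded by a product of $L^2_\mu$ norms, and the resulting functions of $\nu$ feed into the $\nu$-estimate via Minkowski's inequality. Concretely, I would write everything as a trilinear form
\[
\int \frac{\langle\mu\rangle^{s_1'}\langle\nu\rangle^{s_2'}\,F(\mu_1,\nu_1)\,G(\mu-\mu_1,\nu-\nu_1)\,H(\mu,\nu)}{\langle\mu_1\rangle^{s_1}\langle\nu_1\rangle^{s_2}\langle\mu-\mu_1\rangle^{s_1'}\langle\nu-\nu_1\rangle^{s_2'}}.
\]
Since the weight factors as a $\mu$-part times a $\nu$-part, one can bound the $\mu$-integral by the 1D trilinear estimate with partial $L^2_\mu$ norms, and then the $\nu$-integral likewise. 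With these trilinear forms in hand the result is immediate.
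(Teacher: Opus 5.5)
Your proposal is correct. The paper gives no proof of this lemma; it quotes it from Keel--Tao. Your argument is the standard one behind that citation:
\begin{itemize}
\item Write the one-dimensional bounds as trilinear forms with nonnegative kernels on the Fourier side.
\item For $0\le\sigma'\le\sigma$, split $\langle\xi\rangle^{\sigma'}\lesssim\langle\xi-\eta\rangle^{\sigma'}+\langle\eta\rangle^{\sigma'}$. One piece is handled by Young's inequality, using $\langle\cdot\rangle^{-\sigma}\in L^2$. The other is handled by H\"older and Sobolev.
\item Treat $-\sigma\le\sigma'<0$ by duality. In the trilinear form this is just a relabeling of the three frequency variables.
\item Tensorize one null variable at a time. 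This is legitimate exactly because the kernel is nonnegative and factors as a $\mu$-part times a $\nu$-part. Your assignment of the ``$H^\sigma$'' role for the second estimate ($g$ in the $u$-variable, $f$ in the $v$-variable) is also right.
\end{itemize}

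The one step worth writing out is the H\"older--Sobolev estimate. When $0<\sigma'<\sigma$, choose exponents $a\le\sigma-\sigma'$ and $b\le\sigma'$ with $a+b=\frac12$ and $0<a,b<\frac12$. This is possible because $\sigma>\frac12$ strictly, and it avoids the failure of $H^{1/2}\subset L^\infty$. The endpoint cases $\sigma'=0$ and $\sigma'=\sigma$ instead use $H^{\sigma}\subset L^\infty$ on the factor carrying all $\sigma$ derivatives.
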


	\section{Proof of the main results} \label{proof}
	
	In this section, we prove the local well-posedness results by applying the contraction mapping principle. The overall strategy is identical for both spatial dimensions: we first construct the solution map associated with the linear wave equations and then establish null-form estimates. We address the 2D case in Section~\ref{proof2d}, followed by the 1D case in Section~\ref{proof1d}.

	\subsection{Local well-posedness for 2D} \label{proof2d}
	
	We now turn to the proof of Theorem~\ref{main thm}. The argument is structured as follows: we first formulate the Picard iteration map $\mathbf{M}$ in an appropriate resolution space, then systematically bound the null forms to establish the core nonlinear estimates for the nonlinearities $\mathbf{G}$ and $H_{\mu}$, and finally demonstrate that $\mathbf{M}$ is a strict contraction for a sufficiently small lifespan $T>0$. 
	
	To begin with, we define the solution space
	\begin{equation}\label{Ls}
		\begin{aligned}
			L_{s, b}=\{(\boldsymbol{\phi}, \mathbf{A}) \in \mathcal{H}^{s,b}(\mathbb{R}^{1+2}) & \times \mathcal{H}^{s-\frac34,b}(\mathbb{R}^{1+2}): 
			| \boldsymbol{\phi}|_{s, b}+|\mathbf{A}|_{s-\frac{3}{4}, b} \leq 2\left(1+C_{0}\right) M_{0}
			\\
			& (\boldsymbol{\phi}, \partial_{t} \boldsymbol{\phi})|_{t=0}=(\boldsymbol{\phi}_{0}, \boldsymbol{\phi}_{1}),(\mathbf{A}, \partial_{t} \mathbf{A})|_{t=0}=(\mathbf{a}, \dot{\mathbf{a}})\},
		\end{aligned}
	\end{equation}
	where
	\[
	M_{0}=\left\|\boldsymbol{\phi}_{0}\right\|_{H^{s}}+\left\|\boldsymbol{\phi}_{1}\right\|_{H^{s-1}}+\|\mathbf{a}\|_{H^{s-\frac{3}{4}}}+\left\|\dot{\mathbf{a}}\right\|_{H^{s-\frac{7}{4}}}
	\]
	and $C_{0}$ is the same positive constant as in \eqref{energy22}. For $(\boldsymbol{\phi}, \mathbf{A}) \in L_{s, b}$, following the time-localization procedure introduced in \eqref{defw012}, we denote the map $\mathbf{M}$ by
	\begin{equation}\label{dM}
		\begin{aligned}
			\mathbf{M} \boldsymbol{\phi} = ~
			& \theta(t) \Phi_0 + \theta_T(t)( \Phi_1 + \Phi_2),
			\\
			\mathbf{M} A_{\mu} = ~
			& \theta(t) \mathcal{A}_{0\mu} + \theta_T(t)(\mathcal{A}_{1\mu} + \mathcal{A}_{2\mu}),
		\end{aligned}
	\end{equation}
	where $\Phi_0, \mathcal{A}_{0\mu}$ are the linear evolutions of the initial data, $\Phi_1,\Phi_2$ and $\mathcal{A}_{1\mu},\mathcal{A}_{2\mu}$ correspond to the nonlinear terms of \eqref{wave1} and \eqref{wave2},  denote by $\mathbf{G}$ and $H_{\mu}$, respectively. The auxiliary fields $B_{\mu}$ obey the regularity bound $|B_{\mu}|_{s+\frac14,b} \lesssim |A_{\mu}|_{s-\frac34,b}$. By construction, the mapped variables satisfy the localized wave equations:
	\begin{equation}\label{eM1}
		\begin{cases}
			\square \mathbf{M} \boldsymbol{\phi}=\mathbf{G}, \quad(t, x) \in [0, T] \times \mathbb{R}^2,
			\\
			\mathbf{M} \boldsymbol{\phi}|_{t=0}=\boldsymbol{\phi}_0, \quad \partial_t \mathbf{M} \boldsymbol{\phi}|_{t=0}=\boldsymbol{\phi}_1
		\end{cases}
	\end{equation}
	and
	\begin{equation}\label{eM2}
		\begin{cases}
			\square \mathbf{M} A_{\mu}=H_{\mu}, 
			& (t, x) \in[0, T] \times \mathbb{R}^2 ,
			\\
			\left.\mathbf{M} A_{\mu}\right|_{t=0}=a_{\mu}, 
			& \left.\partial_t \mathbf{M} A_{\mu}\right|_{t=0}=\dot{a}_{\mu}.
		\end{cases}
	\end{equation}
	So any fixed point of \( \mathbf{M} \) is a solution of the problem \eqref{cso3s1}--\eqref{2ini2} on \( [0,T] \times \mathbb{R}^2 \). Set $\mathbf{H}=\left(H_0, H_1, H_2\right)$. For $s>1$, $b \in\left(\frac{1}{2}, 1\right), \epsilon \in(0,1-b]$, Lemma~\ref{energy2} yields
	\begin{equation}\label{ephiA}
		\begin{aligned}
			& |\mathbf{M} \boldsymbol{\phi}|_{s, b} \leq C_0\left(\left\|\boldsymbol{\phi}_{0}\right\|_{H^{s}}+\left\|\boldsymbol{\phi}_1\right\|_{H^{s-1}}+T^{\frac{\epsilon}{4}}\|\mathbf{G}\|_{s-1, b+\epsilon-1}\right) ,
			\\
			& |\mathbf{M A}|_{s, b} \leq C_0\left(\|\mathbf{a}\|_{H^{s-\frac{3}{4}}}+\left\|\dot{\mathbf{a}}\right\|_{H^{s-\frac{7}{4}}}+T^{\frac{\epsilon}{4}}\|\mathbf{H}\|_{s-\frac{7}{4}, b+\epsilon-1}\right), \quad \mu=0,1,2.
		\end{aligned}
	\end{equation}
	Hence the proof reduces to estimating the nonlinear terms $\|\mathbf{G}(\boldsymbol{\phi}, \mathbf{B})\|_{s-1, b+\epsilon-1}$ and $\|H_{\mu}(\boldsymbol{\phi}, \mathbf{B})\|_{s-\frac{7}{4}, b+\epsilon-1}$ for $\boldsymbol{\phi} \in \mathcal{H}^{s,b}$ and $\mathbf{B} \in \mathcal{H}^{s+\frac14,b}$. We will repeatedly use Lemma~7.6 and Lemma~8.1 of \cite{KS}, which reduce null-form estimates to bilinear product estimates in $H^{s,b}$.
	
	{\bfseries Step 1: estimate for $Q_0(\boldsymbol{\phi}, \boldsymbol{\phi})$.}
	
	We first estimate the null form  $Q_0(\boldsymbol{\phi}, \boldsymbol{\phi})$. Since, by Lemma~\ref{product}, we have
	\[
	\|\boldsymbol{\phi} Q_{0}(\boldsymbol{\phi}, \boldsymbol{\phi})\|_{s-1, b+\epsilon-1} \lesssim \|\boldsymbol{\phi}\|_{s, b} \|Q_{0}(\boldsymbol{\phi}, \boldsymbol{\phi})\|_{s-1, b+\epsilon-1}.
	\]
	Thus, it suffices to show that $Q_0(\mathcal{H}^{s,b}, \mathcal{H}^{s,b}) \subseteq H^{s-1,b+\epsilon-1}$. By Lemma 7.6 in \cite{KS} and the fractional Leibniz rule, we have
	\[
	\begin{aligned}
		Q_0(\boldsymbol{\phi},\boldsymbol{\phi}) 
		& \precsim D_+^{1-\epsilon} D_-^{1-\epsilon} (D_+^\epsilon \boldsymbol{\phi} \cdot D_+^\epsilon \boldsymbol{\phi}) + (D_+ D_-^{1-\epsilon} \boldsymbol{\phi}) \cdot D_+^\epsilon \boldsymbol{\phi} + D_+^\epsilon \boldsymbol{\phi} \cdot (D_+ D_-^{1-\epsilon} \boldsymbol{\phi})
		\\
		& \precsim D_-^{1-\epsilon} (D_+\boldsymbol{\phi} \cdot D_+^\epsilon \boldsymbol{\phi} ) + (D_+ D_-^{1-\epsilon} \boldsymbol{\phi}) \cdot D_+^\epsilon \boldsymbol{\phi}.
	\end{aligned}
	\]
	Since for $\boldsymbol{\phi} \in \mathcal{H}^{s,b}$, we have
	\begin{align*}
		\| D_+^\epsilon \boldsymbol{\phi} \|_{s-\epsilon,b} 
		& \lesssim \| \langle \xi \rangle ^{s-\epsilon} \langle |\tau|-|\xi| \rangle^b (|\tau|+|\xi|)^\epsilon \widetilde{\boldsymbol{\phi}} \|_{L^2}
		\\
		& \lesssim \| \frac{\langle \xi \rangle ^{s-\epsilon} (|\tau|+|\xi|)^\epsilon}{\langle \xi \rangle ^{s-1} \langle |\tau|+|\xi| \rangle} \langle \xi \rangle ^{s-1} \langle|\tau|+|\xi|\rangle \langle |\tau|-|\xi| \rangle^b  \widetilde{\boldsymbol{\phi}} \|_{L^2}
		\\
		& \lesssim \| \langle \xi \rangle ^{s-1} \langle|\tau|+|\xi|\rangle \langle |\tau|-|\xi| \rangle^b  \widetilde{\boldsymbol{\phi}} \|_{L^2}
		\\
		& \lesssim \| \Lambda^{s-1} \Lambda_{+} \Lambda_{-}^b \boldsymbol{\phi}\|_{L^2} \lesssim |\boldsymbol{\phi}|_{s,b}.
	\end{align*}
	Hence $D_+^\epsilon \boldsymbol{\phi} \in H^{s-\epsilon,b}$. Consequently, it suffices to verify the products
	\begin{align*}
		D_-^{1-\epsilon} (H^{s-1,b} \cdot H^{s-\epsilon,b})
		& \hookrightarrow H^{s-1,b+\epsilon-1},
		\\
		H^{s-1,b+\epsilon-1} \cdot H^{s-\epsilon,b}
		& \hookrightarrow H^{s-1,b+\epsilon-1}.
	\end{align*}
	For the first, it is enough to check
	\[
	H^{s-1,b} \cdot H^{s-\epsilon,b} \hookrightarrow H^{s-1,b},
	\]
	due to the fact that
	\begin{align*}
		\| D_-^{1-\epsilon} \boldsymbol{\phi} \|_{s-1,b+\epsilon-1} 
		& \lesssim \| \langle \xi \rangle ^{s-1} \langle |\tau|-|\xi| \rangle^{b+\epsilon-1} (||\tau| - |\xi||)^{1-\epsilon} \widetilde{\boldsymbol{\phi}} \|_{L^2}
		\\
		& \lesssim \| \Lambda^{s-1} \Lambda_{-}^b \boldsymbol{\phi}\|_{L^2} 
		\lesssim \|\boldsymbol{\phi}\|_{s-1,b},
	\end{align*}
	i.e. $D_-^{1-\epsilon} H^{s-1,b} \hookrightarrow H^{s-1,b+\epsilon-1}$. By Lemma~\ref{product}, we can verify these imbeddings are valid.

	{\bfseries Step 2: estimate for $Q_{\alpha \beta}(B_{\mu}, \boldsymbol{\phi})$.}
	
	We next estimate the null forms involving the gauge field. By Lemma~\ref{product}, we obtain
	\[
	\|\boldsymbol{\phi}(Q_{\alpha \beta}(B_{\mu}, \boldsymbol{\phi}) \cdot (n \times \boldsymbol{\phi})) \|_{s-1, b+\epsilon-1} \lesssim \|\boldsymbol{\phi}\|_{s,b}^2 \| Q(B_{\mu}, \boldsymbol{\phi}) \|_{s-1, b+\epsilon-1}.
	\]
	Hence it suffices to show  $Q_{\alpha \beta}(\mathcal{H}^{s+\frac14,b}, \mathcal{H}^{s,b}) \subseteq H^{s-1,b+\epsilon-1}$. By Lemma 8.1 of \cite{KS} and the fact that $B_{\mu} \in \mathcal{H}^{s+\frac{1}{4},b}$, we have
	\begin{align*}
		Q_{\alpha \beta}(B_{\mu},\boldsymbol{\phi}) 
		&\precsim D_-^{\frac{1}{2}} (D_+^{\frac{1}{2}} B_{\mu} \cdot  D_+\boldsymbol{\phi}) + D_-^{\frac{1}{2}} (D_+ B_{\mu} \cdot D_+^{\frac{1}{2}} \boldsymbol{\phi})
		\\
		&+ D_+^{\frac{1}{2}}B_{\mu} \cdot D_-^{\frac{1}{2}} D_+\boldsymbol{\phi} + D_+ B_{\mu} \cdot D_+^{\frac{1}{2}} D_-^{\frac{1}{2}} \boldsymbol{\phi} + D_+ D_-^{\frac{1}{2}} B_{\mu} \cdot D_+^{\frac{1}{2}} \boldsymbol{\phi} + D_+^{\frac{1}{2}} D_-^{\frac{1}{2}} B_{\mu} \cdot D_+\boldsymbol{\phi}.
	\end{align*}
	Thus the desired estimate follows from the product embeddings
	\begin{align*}
		H^{s-\frac{3}{4},b} \cdot H^{s-\frac{1}{2},b}
		& \hookrightarrow H^{s-1,b+\epsilon-\frac{1}{2}},
		\\
		H^{s-\frac{1}{4},b} \cdot H^{s-1,b}
		& \hookrightarrow H^{s-1,b+\epsilon-\frac{1}{2}}
	\end{align*}
	and
	\begin{align*}
		H^{s-\frac{1}{4},b} \cdot H^{s-1,b-\frac{1}{2}}
		& \hookrightarrow H^{s-1,b+\epsilon-1},
		\\
		H^{s-\frac{3}{4},b} \cdot H^{s-\frac{1}{2},b-\frac{1}{2}}
		& \hookrightarrow H^{s-1,b+\epsilon-1},
		\\
		H^{s-\frac{3}{4},b-\frac{1}{2}} \cdot H^{s-\frac{1}{2},b}
		& \hookrightarrow H^{s-1,b+\epsilon-1},
		\\
		H^{s-\frac{1}{4},b-\frac{1}{2}} \cdot H^{s-1,b}
		& \hookrightarrow H^{s-1,b+\epsilon-1}.
	\end{align*}
	All these estimates follow from Lemma~\ref{product}.

	{\bfseries Step 3: estimate for $Q_0(B_{\mu}, B^{\mu})$ and $Q_{\mu \nu}(B^{\mu}, B^{\nu})$.}
	
	By Lemma~\ref{product}, we obtain
	\[
	\| Q_{0}(B_{\mu}, B^{\mu})(\boldsymbol{\phi} |n \times \boldsymbol{\phi}|^2 + n \times (n \times \boldsymbol{\phi}))\|_{s-1, b+\epsilon-1} \lesssim (\|\boldsymbol{\phi}\|_{s,b}^3 + \|\boldsymbol{\phi}\|_{s,b}) \|Q_{0}(B_{\mu}, B^{\mu})\|_{s-1, b+\epsilon-1}.
	\]
	We can prove
	\[
	\|Q_{0}(B_{\mu}, B^{\mu})\|_{s-1, b+\epsilon-1} \lesssim  |\mathbf{B}|_{s+\frac{1}{4},b}^2,
	\]
	which follows from the product embeddings
	\begin{align*}
		H^{s-\frac{3}{4},b} \cdot H^{s+\frac{1}{4}-\epsilon,b}
		& \hookrightarrow H^{s-1,b},
		\\
		H^{s-\frac{3}{4},b-\epsilon+1} \cdot H^{s+\frac{1}{4}-\epsilon,b}
		& \hookrightarrow H^{s-1,b+\epsilon-1}.
	\end{align*}
	The estimate for $\|Q_{\mu \nu}(B^{\mu}, B^{\nu})\|_{s-1, b+\epsilon-1}$ reduces to
	\begin{align*}
		H^{s-\frac{3}{4},b} \cdot H^{s-\frac{1}{4},b}
		& \hookrightarrow H^{s-1,b+\epsilon-\frac{1}{2}},
		\\
		H^{s-\frac{3}{4},b-\frac{1}{2}} \cdot H^{s-\frac{1}{4},b}
		& \hookrightarrow H^{s-1,b+\epsilon-1},
		\\
		H^{s-\frac{1}{4},b-\frac{1}{2}} \cdot H^{s-\frac{3}{4},b}
		& \hookrightarrow H^{s-1,b+\epsilon-1}.
	\end{align*}
	These embeddings follow from Lemma~\ref{product}.

	{\bfseries Step 4: estimate for the remaining polynomial term.}
	
	The remaining polynomial nonlinearities contain no derivatives. Hence they are estimated directly by Lemma~\ref{product}, yielding
	\[
	\|\frac{1}{\kappa^{2}}\left(\boldsymbol{\phi}\left(n \cdot \boldsymbol{\phi}\right)-n(\boldsymbol{\phi} \cdot \boldsymbol{\phi})\right)\left(1-n \cdot \boldsymbol{\phi}\right)^{2}\left(1+2 n \cdot \boldsymbol{\phi}\right)\|_{s-1, b+\epsilon-1} \lesssim \left(|\boldsymbol{\phi}|_{s, b}^{2}+|\boldsymbol{\phi}|_{s, b}^{5}\right).
	\]
	Thus we complete the proof of $\|\mathbf{G}(\boldsymbol{\phi}, \mathbf{B})\|_{s-1, b+\epsilon-1} $.
	
	{\bfseries Step 5: estimate for $H_{\mu}$.}
	
	Finally, the estimates for \(H_\mu\) are completely analogous. Indeed, every derivative term in \(H_\mu\) is again expressed through \(Q_0\) or \(Q_{\alpha\beta}\), the only difference lies in the Sobolev index \( s - \frac{7}{4} \), which is handled by the same product estimates. Therefore the same argument as above gives
	\begin{align*}
		& \| Q_{\alpha \beta}(\boldsymbol{\phi}, \boldsymbol{\phi} \times n)\|_{s-\frac{7}{4}, b+\epsilon-1}
		\lesssim |\boldsymbol{\phi}|^2_{s,b},
		\\
		& \|\boldsymbol{\phi} Q_{\alpha \beta}(\boldsymbol{\phi}, 
		B^{\nu})\|_{s-\frac{7}{4}, b+\epsilon-1} \lesssim \|\boldsymbol{\phi}\|_{s,b} \|Q(\boldsymbol{\phi}, B^{\nu})\|_{s-\frac{7}{4}, b+\epsilon-1}
		\lesssim |\boldsymbol{\phi}|_{s,b}^2 |\mathbf{B}|_{s+\frac{1}{4},b},
		\\
		& \|\boldsymbol{\phi} Q_0(\boldsymbol{\phi}, B_{\mu})\|_{s-\frac{7}{4}, b+\epsilon-1} \lesssim \|\boldsymbol{\phi}\|_{s,b} \|Q_0(\boldsymbol{\phi}, B_{\mu}\|_{s-\frac{7}{4}, b+\epsilon-1}
		\lesssim |\boldsymbol{\phi}|_{s,b}^2 |\mathbf{B}|_{s+\frac{1}{4},b}.
	\end{align*}
	Hence $\|H_{\mu}(\boldsymbol{\phi}, \mathbf{B})\|_{s-\frac{7}{4}, b+\epsilon-1}$ follows.
	
	Collecting the estimates obtained in Steps~1--5, we conclude that
	\begin{equation}\label{esum}
		\begin{aligned}
			& \|\mathbf{G}\|_{s-1, b+\epsilon-1} + \|\mathbf{H}\|_{s-\frac{7}{4}, b+\epsilon-1}
			\\
			& \leq  C \left(|\boldsymbol{\phi}|_{s, b}^{2} + | \boldsymbol{\phi}|_{s, b}^{5} + |\boldsymbol{\phi}|_{s,b}^3 | \mathbf{B}|_{s+\frac{1}{4},b}^2 + | \boldsymbol{\phi}|_{s,b}^3 |\mathbf{B}|_{s+\frac{1}{4},b}  \right.
			\\
			& \left. + |\boldsymbol{\phi}|_{s,b}^2 | \mathbf{B}|_{s+\frac{1}{4},b} + |\boldsymbol{\phi}|_{s,b} | \mathbf{B}|_{s+\frac{1}{4},b}^2 + |\boldsymbol{\phi}|_{s,b} |\mathbf{B}|_{s+\frac{1}{4},b} \right).
		\end{aligned}
	\end{equation}
	
	Substituting these nonlinear estimates into the linear estimates \eqref{ephiA}, and choosing
	\begin{equation}\label{T}
		T=\left(\frac{M_0}{C[2(1+C_0)M_0]^5}\right)^{\frac{4}{\epsilon}},
	\end{equation}
	we finally obtain
	\begin{equation}\label{Me}
		|\mathbf{M} \boldsymbol{\phi}|_{s, b}+|\mathbf{M A}|_{s-\frac{3}{4}, b} \leq 2 C_{0} M_{0},
	\end{equation}
	hence $\mathbf{M}$ maps $L_{s, b}$ to itself. Next we prove that it is a contraction map on $L_{s, b}$.
	
	Since \eqref{eM1}--\eqref{eM2} is a semilinear system, for any $(\boldsymbol{\phi}, \mathbf{A}),(\boldsymbol{\psi}, \mathbf{D}) \in L_{s, b}$, we have
	\[
	\begin{aligned}
		& \|\mathbf{G}(\boldsymbol{\phi},\mathbf{B}) - \mathbf{G}(\boldsymbol{\psi},\mathbf{D})\|_{s-1, b+\epsilon-1}
		+ \|\mathbf{H}(\boldsymbol{\phi},\mathbf{B}) - \mathbf{H}(\boldsymbol{\psi},\mathbf{D})\|_{s-\frac{7}{4}, b+\epsilon-1}
		\\
		& \lesssim P\left( |\boldsymbol{\phi}|_{s,b}, |\boldsymbol{\psi}|_{s,b}, |\mathbf{A}|_{s-\frac{3}{4},b}, |\mathbf{D}|_{s-\frac{3}{4},b} \right)  \left( |\boldsymbol{\phi} -  \boldsymbol{\psi}|_{s, b} + |\mathbf{A} -  \mathbf{D}|_{s-\frac{3}{4}, b}\right),
	\end{aligned}
	\]
	where \(P\) is a polynomial satisfying \(P(0)=0\). This estimate follows directly from \eqref{esum} together with the identities $Q_{\alpha \beta}(\phi,\psi)-Q_{\alpha \beta}(\phi^{\star},\psi^{\star}) = Q_{\alpha \beta}(\phi-\phi^{\star},\psi)+Q_{\alpha \beta}(\phi^{\star},\psi-\psi^{\star})$, and the analogous identity for $Q_0$. Consequently,
	\[
	\begin{aligned}
		& |\mathbf{M} \boldsymbol{\phi}-\mathbf{M} \boldsymbol{\psi}|_{s, b}
		+|\mathbf{M} \mathbf{A}-\mathbf{M} \mathbf{D}|_{s-\frac{3}{4}, b} 
		\\
		& \leq  C_0 T^{\frac{\epsilon}{4}} P\left( |\boldsymbol{\phi}|_{s,b}, |\boldsymbol{\psi}|_{s,b}, |\mathbf{A}|_{s-\frac{3}{4},b}, |\mathbf{D}|_{s-\frac{3}{4},b} \right) \left( |\boldsymbol{\phi} -  \boldsymbol{\psi}|_{s, b} + |\mathbf{A} -  \mathbf{D}|_{s-\frac{3}{4}, b} \right) .
	\end{aligned}
	\]
	For sufficiently small $T$, this implies
	\begin{equation}\label{contraction}
		|\mathbf{M} \boldsymbol{\phi} - \mathbf{M} \boldsymbol{\psi}|_{s,b} + |\mathbf{M} \mathbf{A} - \mathbf{M D}|_{s-\frac{3}{4}, b} \leq \frac{1}{2}\left(|\boldsymbol{\phi} - \boldsymbol{\psi}|_{s,b} + |\mathbf{A} - \mathbf{D}|_{s-\frac34,b}\right).
	\end{equation}
	Therefore, $\mathbf{M}$ is a contraction map on $L_{s,b}$. By the Banach fixed-point theorem, there exists a unique solution to \eqref{eM1}--\eqref{eM2} in $L_{s,b}$. Furthermore, the continuous dependence of the solution on the initial data follows as a standard consequence of the contraction mapping principle. This completes the proof of Theorem~\ref{main thm}.

	\subsection{Local well-posedness for 1D} \label{proof1d}
	
	In one-dimensional case, we work in the null-coordinate framework introduced in Section~\ref{nf1d}. Thanks to the product-space characterization of the $X^{s,b}$ due to \eqref{X} under the null coordinate transformation, all derivative nonlinearities can be estimated through the product estimates of Keel and Tao \cite{KT}, i.e. Lemma~\ref{uvproduct}. Consequently, the proof becomes significantly more direct than in two dimensions. 
	
	Let $s>b>\frac12$, we first define the solution space
	\begin{equation}\label{Ls1}
		\begin{aligned}
			\widetilde{L}_{s, b}=
			& \{(\boldsymbol{\phi}, \mathbf{A}, N)
			\in X^{s,b}(\mathbb{R}^{1+1}) \times X^{s-1,b}(\mathbb{R}^{1+1}) \times X^{s,b}(\mathbb{R}^{1+1}) : 
			\\
			& \| \boldsymbol{\phi}\|_{X^{s,b}}+\|\mathbf{A}\|_{X^{s-1, b}}+\|N\|_{X^{s, b}} \leq 2\left(1+C_{0}\right) M_{0},
			\\
			& (\boldsymbol{\phi}, \partial_{t} \boldsymbol{\phi})|_{t=0}=(\boldsymbol{\phi}_{0}, \boldsymbol{\phi}_{1}),(\mathbf{A}, \partial_{t} \mathbf{A})|_{t=0}=(\mathbf{a}, \dot{\mathbf{a}}),(N, \partial_{t} N)|_{t=0}=(n_0, n_1)
			\},
		\end{aligned}
	\end{equation}
	where
	\[
	M_0=\left\|\boldsymbol{\phi}_0\right\|_{H^s}+\left\|\boldsymbol{\phi}_1\right\|_{H^{s-1}}+\|\mathbf{a}\|_{H^{s-1}}+\left\|\dot{\mathbf{a}}\right\|_{H^{s-2}}+\left\|n_0\right\|_{H^s}+\left\|n_1\right\|_{H^{s-1}}.
	\]
	Analogous to the construction in the 2D case, we define the Picard iteration map $\mathbf{M}$ via the identical time-localization procedure \eqref{dM}. By construction, for any $(\boldsymbol{\phi}, \mathbf{A}, N) \in \widetilde{L}_{s,b}$, the mapped variables strictly satisfy the following localized wave equations:
	\begin{equation}\label{eM11}
		\begin{cases}
			\square \mathbf{M} \boldsymbol{\phi}=\mathbf{I}, \quad(t, x) \in [0, T] \times \mathbb{R},
			\\
			\mathbf{M} \boldsymbol{\phi}|_{t=0}=\boldsymbol{\phi}_0, \quad \partial_t \mathbf{M} \boldsymbol{\phi}|_{t=0}=\boldsymbol{\phi}_1
		\end{cases}
	\end{equation}
	and
	\begin{equation}\label{eM12}
		\left\{
		\begin{array}{l}
		\square \mathbf{M} B_0=J_0, \quad (t, x) \in[0, T] \times \mathbb{R},
		\\
		\mathbf{M} B_0|_{t=0}=0, \;
		\partial_t\mathbf{M} B_0|_{t=0}=-a_1,
		\end{array}
		\right.
		\qquad
		\left\{
		\begin{array}{l}
		\square \mathbf{M}B_1 = J_1, \quad (t, x) \in[0, T] \times \mathbb{R},
		\\
		\mathbf{M}B_1|_{t=0}=0,\;
		\partial_t\mathbf{M}B_1|_{t=0}=a_0,
		\end{array}
		\right.
	\end{equation}
	and
	\begin{equation}\label{eM13}
		\begin{cases}
			\square \mathbf{M} N=J, 
			& (t, x) \in[0, T] \times \mathbb{R},
			\\
			\left.\mathbf{M} N\right|_{t=0}=n_0, 
			& \left.\partial_t \mathbf{M} N\right|_{t=0}=n_1,
		\end{cases}
	\end{equation}
	where $\mathbf{I}=(I_1,I_2,I_3), J_{\mu}, J$ represent the right side of \eqref{1wave1}--\eqref{1wave4}. 
	
	We first apply the linear estimates established in Lemma~\ref{energy1}, it follows that
	\begin{align}
		& \|\mathbf{M} \boldsymbol{\phi}\|_{X^{s,b}} \leq C\left(\left\|\boldsymbol{\phi}_{0}\right\|_{H^{s}}+\left\|\boldsymbol{\phi}_{1}\right\|_{H^{s-1}}+T^{\frac{\epsilon}{4}} \|\mathbf{I}\|_{X^{s-1, b+\epsilon-1}}\right) ,
		\label{Mphi}
		\\
		& \|\mathbf{M}B_{\mu}\|_{X^{s,b}} \leq C\left(\|\mathbf{a}\|_{H^s}+T^{\frac{\epsilon}{4}}\|J_{\mu}\|_{X^{s-1, b+\epsilon-1}}\right), \quad \mu=0,1,
		\label{MB}
		\\
		& \|\mathbf{M}N\|_{X^{s,b}} \leq C\left(\|n_0\|_{H^{s}}+\|n_1\|_{H^{s-1}}+T^{\frac{\epsilon}{4}}\|J\|_{X^{s-1, b+\epsilon-1}}\right).
		\label{MN}
	\end{align}
	Therefore, it remains to estimate the nonlinear terms $\|\mathbf{I}\|_{X^{s-1, b+\epsilon-1}}, \|J_{\mu}\|_{X^{s-1, b+\epsilon-1}}$ and $\|J\|_{X^{s-1, b+\epsilon-1}}$. Each nonlinear term can be written as a linear combination of the null forms $Q_0(X^{s,b}, X^{s,b})$ and $Q_1(X^{s,b}, X^{s,b})$. The null-coordinate formulation allows them to be estimated through the product estimates of Lemma~\ref{uvproduct}.
	
	Applying the product estimates established in Lemma~\ref{uvproduct} immediately yields
	\begin{equation}\label{Q0}
		\begin{aligned}
			\|Q_0(f,g)\|_{H_u^{s-1} H_v^{b-1}} 
			& \lesssim \|\partial_u f \partial_v g\|_{H_u^{s-1} H_v^{b-1}} + \|\partial_v f \partial_u g\|_{H_u^{s-1} H_v^{b-1}}
			\\
			& \lesssim \|\partial_u f\|_{H_u^{s-1} H_v^b} \|\partial_v g\|_{H_u^b H_v^{b-1}} + \|\partial_v f\|_{H_u^b H_v^{b-1}} \|\partial_u g\|_{H_u^{s-1} H_v^b}
			\\
			& \lesssim \|f\|_{H_u^s H_v^b} \|g\|_{H_u^b H_v^b} + \|f\|_{H_u^b H_v^b} \|g\|_{H_u^s H_v^b}.
		\end{aligned}
	\end{equation}
	Similarly
	\begin{equation}\label{Q1}
		\|Q_1(f,g)\|_{H_u^{s-1} H_v^{b-1}} \lesssim \|f\|_{H_u^s H_v^b} \|g\|_{H_u^b H_v^b} + \|f\|_{H_u^b H_v^b} \|g\|_{H_u^s H_v^b}.
	\end{equation}
	Combining \eqref{X}, \eqref{Q0}, \eqref{Q1} and the explicit expressions of the nonlinearities immediately gives
	\begin{align}
		\|\mathbf{I}\|_{X^{s-1, b+\epsilon-1}} 
		& \lesssim P \left( \|\boldsymbol{\phi}\|_{X^{s, b}}, \|B_{\mu}\|_{X^{s, b}}, \|N\|_{X^{s, b}} \right) \left( \|\boldsymbol{\phi}\|_{X^{s, b}}+\|B_{\mu}\|_{X^{s, b}}+\|N\|_{X^{s,b}} \right),
		\label{eI}
		\\
		\|J_{\mu}\|_{X^{s-1, b+\epsilon-1}}
		& \lesssim P \left( \|\boldsymbol{\phi}\|_{X^{s, b}}, \|N\|_{X^{s, b}} \right) \left( \|\boldsymbol{\phi}\|_{X^{s, b}}+\|N\|_{X^{s,b}} \right),
		\label{eJ01}
		\\
		\|J\|_{X^{s-1, b+\epsilon-1}}
		& \lesssim P \left( \|\boldsymbol{\phi}\|_{X^{s, b}}, \|B_{\mu}\|_{X^{s, b}}, \right) \left( \|\boldsymbol{\phi}\|_{X^{s, b}}+\|B_{\mu}\|_{X^{s, b}}\right),
		\label{eJ2}
	\end{align}
	where \(P\) is a polynomial satisfying \(P(0)=0\). Let  $(\boldsymbol{\phi},\mathbf{A},N), (\boldsymbol{\psi},\mathbf{D},K) \in \widetilde{L}_{s,b}$. By \eqref{Mphi}--\eqref{MN}, we obtain
	\[
	\begin{aligned}
		& \|\mathbf{M} \boldsymbol{\phi}-\mathbf{M} \boldsymbol{\psi}\|_{X^{s,b}} + \|\mathbf{M} \mathbf{A}-\mathbf{M} \mathbf{D}\|_{X^{s-1,b}} + \|\mathbf{M} N-\mathbf{M} K\|_{X^{s,b}}
		\\
		& \leq C T^{\frac{\epsilon}{4}} P \left( \|\boldsymbol{\phi}\|_{X^{s,b}},\|\boldsymbol{\psi}|_{X^{s,b}},\|\mathbf{A}\|_{X^{s-1,b}},\|\mathbf{D}\|_{X^{s-1,b}},\|N\|_{X^{s,b}},\|K\|_{X^{s,b}} \right) \cdot
		\\
		& \quad \left( \|\boldsymbol{\phi} - \boldsymbol{\psi}\|_{X^{s,b}} + \|\mathbf{A} - \mathbf{D}\|_{X^{s-1,b}} + \|N-K\|_{X^{s,b}} \right).
	\end{aligned}
	\]
	Then for sufficiently small $T$, this implies
	\begin{equation}\label{contraction1}
		\begin{aligned}
			& \|\mathbf{M} \boldsymbol{\phi} - \mathbf{M} \boldsymbol{\psi}\|_{X^{s,b}} + \|\mathbf{M} \mathbf{A} - \mathbf{M D}\|_{X^{s-1,b}} + \|\mathbf{M} N - \mathbf{M} K\|_{X^{s,b}} 
			\\
			& \leq \frac{1}{2} \left(\|\boldsymbol{\phi} - \boldsymbol{\psi}\|_{X^{s,b}} + \|\mathbf{A} - \mathbf{D}\|_{X^{s-1,b}} + \|N - K\|_{X^{s,b}} \right).
		\end{aligned}
	\end{equation}
	Therefore, $\mathbf{M}$ maps $\widetilde{L}_{s,b}$ into itself and is a contraction. By the Banach fixed-point theorem, there exists a unique solution to \eqref{eM11}--\eqref{eM13} in $\widetilde{L}_{s,b}$. Furthermore, the continuous dependence of the solution on the initial data follows as a standard consequence of the contraction mapping principle. This completes the proof of Theorem~\ref{main thm 1dim}. 
	

	\section{Appendix}\label{appendix}
	In this appendix, we first provide a detailed derivation of coupled wave system of the ($1+d$)-dimensional CS-$O(3)$ sigma model from the Euler–Lagrange equations, under the Lorenz gauge $\partial_{\mu} A^{\mu} = 0$. Next, we present a proof of Proposition~\ref{energy1}.
	
	\subsection{Wave system in 2D}
	The Euler–Lagrange equations under the Lorenz gauge take the form:
	\begin{align}
		D_{\mu} D^{\mu} \boldsymbol{\phi}+\boldsymbol{\phi}\left(D_{\mu} \boldsymbol{\phi} \cdot D_{\mu} \boldsymbol{\phi}\right) 
		& =-\frac{1}{\kappa^{2}}\left(\boldsymbol{\phi}\left(n \cdot \boldsymbol{\phi}\right)-n(\boldsymbol{\phi} \cdot \boldsymbol{\phi})\right)\left(1-n \cdot \boldsymbol{\phi}\right)^{2}\left(1+2 n \cdot \boldsymbol{\phi}\right),  
		\label{2EL11}
		\\
		\kappa F_{01} 
		& = \left\langle n \times \boldsymbol{\phi}, D_{2} \boldsymbol{\phi}\right\rangle, 
		\label{2EL22}
		\\
		\kappa F_{02} 
		& =  \left\langle n \times \boldsymbol{\phi}, D_{1} \boldsymbol{\phi}\right\rangle,  
		\label{2EL33}
		\\
		\partial_{\mu} A^{\mu} 
		& = 0,
		\label{2Lorenz}
	\end{align}
	supplemented by the constraint equation
	\begin{equation}
		\kappa F_{12} = - \left\langle n \times \phi, D_{0} \boldsymbol{\phi}\right\rangle
		\label{ELconstraint}
	\end{equation}
	and the initial data
	\begin{equation}
		A_{\mu}(0, \cdot)=a_{\mu}, \quad \partial_{t} A_{\mu}(0, \cdot)=\dot{a}_{\mu}, \quad \boldsymbol{\phi}(0, \cdot)=\boldsymbol{\phi}_{0}, \quad \partial_{t} \boldsymbol{\phi}(0, \cdot)=\boldsymbol{\phi}_{1}
		\label{initial}
	\end{equation}
	with $\langle \boldsymbol{\phi}_0, \boldsymbol{\phi}_1 \rangle = 0 $. 
	
	Next, we write \eqref{2EL11}-\eqref{ELconstraint} as wave equations. Notice that
	$$
	\begin{aligned}
		D_\mu D^\mu \phi 
		& =\partial_\mu D^\mu \phi+A_\mu\left(n \times D^\mu \phi\right) 
		\\
		& =\partial_\mu \partial^\mu \phi+\partial_\mu A^\mu(n \times \phi)+2 A^\mu n \times \partial_\mu \phi+A_\mu A^\mu n \times(n \times \phi)
	\end{aligned}
	$$
	and
	\[
	D_\mu \phi \cdot D^\mu \phi = \partial_\mu \phi \cdot \partial^\mu \phi+2 A_\mu \partial^\mu \phi \cdot n \times \phi+A_\mu A^\mu|n \times \phi|^2.
	\]
	Then we obtain
	\[
	\begin{aligned}
		\begin{split}
			\square \boldsymbol{\phi} = 
			& -\boldsymbol{\phi}\left(\partial_\mu \phi \cdot \partial^\mu \phi+2 A_{\mu} \partial^{\mu} \boldsymbol{\phi} \cdot\left(n \times \boldsymbol{\phi}\right)+A_{\mu} A^{\mu}\left|n \times \boldsymbol{\phi}\right|^{2}\right)-2 A^{\mu}\left(n \times \partial_{\mu} \boldsymbol{\phi}\right) 
			\\ 
			& -A_{\mu} A^{\mu} n \times\left(n \times \boldsymbol{\phi}\right)-\frac{1}{\kappa^{2}}\left(\boldsymbol{\phi}\left(n \cdot \boldsymbol{\phi}\right)-n(\boldsymbol{\phi} \cdot \boldsymbol{\phi})\right)\left(1-n \cdot \boldsymbol{\phi}\right)^{2}\left(1+2 n \cdot \boldsymbol{\phi}\right).
		\end{split}
	\end{aligned}
	\]
	
	For $F_{\mu \nu}$, we differentiate \eqref{ELconstraint} with respect to  $t$ and \eqref{2EL33} with respect to $x_{2}$ to obtain
	$$
	\begin{aligned}
		& \kappa\left(\partial_0 \partial_0 A_1-\partial_0 \partial_1 A_0\right)=\partial_0\left(D_2 \boldsymbol{\phi} \cdot n \times \boldsymbol{\phi}\right), 
		\\
		& \kappa\left(\partial_2 \partial_1 A_2-\partial_2 \partial_2 A_1\right)=-\partial_2\left(D_0 \boldsymbol{\phi} \cdot n \times \boldsymbol{\phi}\right) .
	\end{aligned}
	$$
	Summing these two equations and using the Lorenz gauge condition yields
	$$
	\begin{aligned}
		\kappa \square A_{1}
		&=\partial_{0}\left(D_{2} \boldsymbol{\phi} \cdot n \times \boldsymbol{\phi}\right)-\partial_{2}\left(D_{0} \boldsymbol{\phi} \cdot n \times \boldsymbol{\phi}\right)
		\\
		& = D_{0} D_{2} \boldsymbol{\phi} \cdot n \times \boldsymbol{\phi}+D_{2} \boldsymbol{\phi} \cdot n \times D_{0} \boldsymbol{\phi}-D_{2} D_{0} \boldsymbol{\phi} \cdot n \times \boldsymbol{\phi}-D_{0} \boldsymbol{\phi} \cdot n \times D_{2} \boldsymbol{\phi} 
		\\
		& = F_{02}|n \times \boldsymbol{\phi}|^{2}+2 n \cdot \partial_{0} \boldsymbol{\phi} \times \partial_{2} \boldsymbol{\phi}+A_{2} \partial_{0}|n \times \boldsymbol{\phi}|^{2}-A_{0} \partial_{2}|n \times \boldsymbol{\phi}|^{2}
		\\
		& =2 n \cdot \partial_{0} \boldsymbol{\phi} \times \partial_{2} \boldsymbol{\phi}+\partial_{0}\left(A_{2}|n \times \boldsymbol{\phi}|^{2}\right)-\partial_{2}\left(A_{0}|n \times \boldsymbol{\phi}|^{2}\right).
	\end{aligned}
	$$
	Similarly, for $\kappa \Box A_0, \kappa \Box A_2$ we have:
	\[
	\begin{aligned}
		\begin{split}
			\square A_{\mu} 
			& =\frac{1}{\kappa} \epsilon_{\mu \nu \rho}\left(\frac{1}{2} F^{\nu \rho}|n \times \boldsymbol{\phi}|^{2}+\partial^{\nu} \boldsymbol{\phi} \times \partial^{\rho} \boldsymbol{\phi} \cdot n+A^{\rho} \partial^{\nu}(\boldsymbol{\phi} \times(n \times \boldsymbol{\phi}) \cdot n)\right) 
			\\
			& =\frac{1}{\kappa} \epsilon_{\mu \nu \rho}\left(n \cdot\left(\partial^{\nu} \boldsymbol{\phi} \times \partial^{\rho} \boldsymbol{\phi}\right)+A^{\rho} \partial^{\nu}\left(\left(\boldsymbol{\phi} \times\left(n \times \boldsymbol{\phi}\right)\right) \cdot n\right)\right).
		\end{split}
	\end{aligned}
	\]

	\subsection{Wave system in 1D}
	The Chern--Simons gauged $O(3)$ sigma system in $\mathbb{R}^{1+1}$ can be regarded as a dimensional reduction of the model in $\mathbb{R}^{1+2}$, which is given by
	\begin{align}
		D_\mu D^\mu \boldsymbol{\phi} 
		& + \boldsymbol{\phi}(\langle D^\mu \boldsymbol{\phi}, D_\mu \boldsymbol{\phi} \rangle + \phi_3 U(\phi_3, N))  = U(\phi_3, N)n , 
		\label{11EL1}
		\\
		\kappa F_{01} 
		& = N|n \times \boldsymbol{\phi}|^2, 
		\label{11EL2}
		\\
		\kappa \partial_0 N 
		& = -\langle n \times \boldsymbol{\phi}, D_1 \boldsymbol{\phi} \rangle, 
		\label{11EL3}
		\\
		\kappa \partial_1 N 
		& = -\langle n \times \boldsymbol{\phi}, D_0 \boldsymbol{\phi} \rangle,
		\label{11EL4}
		\\
		\partial_{\mu} A^{\mu} 
		& = 0.
		\label{1Lorenz}
	\end{align}
	This syetem is invariant under gauge transformation \eqref{gaugetrans} together with $N \rightarrow N$, thus a solution to it is formed by a class of gauge-equivalent pairs $(\boldsymbol{\phi}, A_{\mu}, N)$.
	
	The reformulation of \eqref{11EL1}--\eqref{1Lorenz} is similar to 2D. For \eqref{11EL1}, we have
	\[
	\begin{split}
		\Box \boldsymbol{\phi} 
		& = - \boldsymbol{\phi} \left( \partial_{\mu} \boldsymbol{\phi} \cdot \partial^{\mu} \boldsymbol{\phi} + 2 A_{\mu} \partial^{\mu} \boldsymbol{\phi} \cdot (n \times \boldsymbol{\phi}) + A_{\mu} A^{\mu} |n \times \boldsymbol{\phi}|^2 + \phi_3 U(\phi_3, N)  \right)
		\\
		& - 2 A_{\mu} \partial^{\mu}(n \times \boldsymbol{\phi}) - A_{\mu} A^{\mu} n \times (n \times \boldsymbol{\phi}).
	\end{split}
	\]
	Then applying $\partial_0$ and $\partial_1$ to \eqref{11EL2} and Lorenz gauge condition, we obtain
	\[
	\begin{aligned}
		\kappa \Box A_0
		& = \partial_1(N|n \times \boldsymbol{\phi}|^2),
		\\
		\kappa \Box A_1
		& = \partial_0(N|n \times \boldsymbol{\phi}|^2),
	\end{aligned}
	\]
	Applying $\partial_0$ to \eqref{11EL3}, $\partial_0$ to \eqref{11EL4} and subtracting the resulting identities, we obtain
	\[
	\kappa \Box N = D_1(n \times \boldsymbol{\phi}) \cdot D_0 \boldsymbol{\phi} - D_0(n \times \boldsymbol{\phi}) \cdot D_1 \boldsymbol{\phi} - (\phi_1^2 + \phi_2^2)F_{01},
	\]
	due to the identity
	\[
	D_{\mu} D_{\nu} \boldsymbol{\phi} - D_{\nu} D_{\mu} \boldsymbol{\phi} = F_{\mu \nu} (n \times \boldsymbol{\phi}).
	\]
	
	\subsection{Proof of Proposition~\ref{energy1}} \label{proof 3.1}
	 We present a proof of Proposition~\ref{energy1}, inspired by Selberg’s paper \cite{Selberg}. Let us restate the Proposition in a more precise form.
	\begin{proposition} \label{energy1dim}
		Assume $s > \frac12, b \in\left(\frac{1}{2}, 1\right), \epsilon \in[0,1-b]$. Consider the Cauchy problem for the linear wave equation
		\begin{equation}\label{1linearw}
			\begin{cases}
				\square w=F(t,x), \quad(t, x) \in \mathbb{R}^{1+d}, 
				\\
				\left.w\right|_{t=0}=f,\left.\quad \partial_t w\right|_{t=0}=g,
			\end{cases}
		\end{equation}
		let $f, g$ and $F$ satisfy $f \in H^{s}, g \in H^{s-1}$, and $F \in X^{s-1, b+\epsilon-1}$.
		
		Let $0<T<1$ and define
		\begin{equation}\label{1defw}
			w(t)=\theta(t)w_0 + \theta_T(w_1+ w_2),
		\end{equation}
		where
		\begin{equation}\label{1defw012}
		\begin{aligned}
			& w_0=\cos (t D) f+D^{-1} \sin (t D) g ,
			\\
			& F_{1}=\eta\left(T^{\frac14} \Lambda_{-}\right) F, \quad F_{2}=\left(1-\eta\left(T^{\frac14} \Lambda_{-}\right)\right) F,
			\\
			& w_1= - \int_{0}^{t} D^{-1} \sin \left(\left(t-t^{\prime}\right) D\right) F_{1}\left(t^{\prime}\right) d t^{\prime} , 
			\\
			& w_2=\square^{-1} F_2,
		\end{aligned}
		\end{equation}
		Then, the function $w$ defined in \eqref{1defw}-\eqref{1defw012} is the unique solution to \eqref{1linearw} on $[0, T] \times \mathbb{R}^d$ such that $w \in C([0, T] ; H^{s}) \cap C^1([0, T] ; H^{s-1})$ and satisfies the following estimate:
		\begin{equation}\label{energy11}
			\|w\|_{X^{s, b}} \leq C_{0}(\|f\|_{H^{s}}+\|g\|_{H^{s-1}}+T^{\frac{\epsilon}{4}}\|F\|_{X^{s-1, b+\epsilon-1}}),
		\end{equation}
		where $C_{0}$ only depends on $\theta$ and $b$.
	\end{proposition}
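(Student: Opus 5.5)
The plan is to follow Selberg's proof of Lemma~\ref{energy2} term by term. The weight $\langle\xi\rangle^{s-1}\langle|\tau|+|\xi|\rangle$ in Selberg's argument is replaced by $\langle|\tau|+|\xi|\rangle^{s}$. Uniqueness and the regularity $w\in C([0,T];H^s)\cap C^1([0,T];H^{s-1})$ need no new work. Indeed, $\langle|\tau|+|\xi|\rangle^{s}\ge \langle\xi\rangle^{s-1}\langle|\tau|+|\xi|\rangle$ when $s\ge1$, and more generally $X^{s,b}\hookrightarrow H^{s,b}$ for $s\ge 0$. So once the $X^{s,b}$ bound holds, $w$ is the same function as in Lemma~\ref{energy2}, and the continuity and uniqueness follow from the embeddings of Selberg quoted earlier. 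The whole task is the estimate \eqref{energy11}, which I will prove separately for the three pieces $\theta w_0$, $\theta_T w_1$ and $\theta_T w_2$.

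\textbf{Homogeneous part.} The free solution $w_0$ has space-time Fourier transform supported on the cone $|\tau|=|\xi|$, where $\langle|\tau|+|\xi|\rangle\sim\langle\xi\rangle$. Multiplying by $\theta(t)$ convolves in $\tau$ with the Schwartz function $\hat\theta$. I will use the pointwise bounds $\langle|\tau|+|\xi|\rangle^s\lesssim\langle\xi\rangle^s\langle\tau\mp|\xi|\rangle^{|s|}$ and $\langle|\tau|-|\xi|\rangle\le\langle\tau\mp|\xi|\rangle$. Together with the decay of $\hat\theta$, these give $\|\theta w_0\|_{X^{s,b}}\lesssim\|f\|_{H^s}+\|g\|_{H^{s-1}}$. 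This step is routine.

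\textbf{High-modulation part $w_2$.} On the support of $F_2$ we have $\langle|\tau|-|\xi|\rangle\gtrsim T^{-1/4}$. The symbol of $\square^{-1}$ is $(|\xi|^2-\tau^2)^{-1}$, and $\big||\xi|^2-\tau^2\big|=(|\tau|+|\xi|)\,\big||\tau|-|\xi|\big|$. Hence
\[
\|w_2\|_{X^{s,b}}\lesssim \|\Lambda_+^{s-1}\Lambda_-^{b-1}F_2\|_{L^2}\lesssim T^{\epsilon/4}\|F\|_{X^{s-1,b+\epsilon-1}}.
\]
It remains to absorb the cutoff $\theta_T$. I intend to show that multiplication by $\theta_T$ is bounded on $X^{s,b}$ uniformly in $0<T<1$ for $\frac12<b<1$. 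The argument is Selberg's for $H^{s,b}$. Since $\hat\theta_T$ concentrates at scale $|\tau|\lesssim T^{-1}$, the change in $\langle|\tau|+|\xi|\rangle^{s}$ under a $\tau$-shift of size $\lambda$ is controlled by $\langle\lambda\rangle^{|s|}$, which can be absorbed. There is one technical point: the uniform bound in $T$ for the $\Lambda_-^b$ factor. Selberg proves it by commuting $\theta_T$ with $\Lambda_-^b$ via the fractional Leibniz rule in $t$; in the $X^{s,b}$ setting the extra $\Lambda_+^s$ weight only costs powers that are harmless on the support of $F_2$.

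\textbf{Low-modulation part $w_1$ (main obstacle).} This is where Selberg's estimate relies on the Duhamel formula together with the time cutoff. My plan is to expand
\[
\int_0^t e^{\pm i(t-t')D}F_1(t')\,dt'
\]
in the usual way: write $\frac{e^{it(\tau\mp|\xi|)}-1}{\tau\mp|\xi|}$ as a Taylor series in $t(\tau\mp|\xi|)$ when $|\tau\mp|\xi||\lesssim T^{-1}$, and split into a free wave plus $\square^{-1}$-type pieces otherwise, as in Kenig--Ponce--Vega and Selberg. On the support of $F_1$ we have $|\tau|\approx|\xi|+O(T^{-1/4})$, so $\langle|\tau|+|\xi|\rangle\sim\langle\xi\rangle$ up to a factor $T^{-1/4}$ that is controlled. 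After multiplication by $\theta_T$ the output again lives near the cone up to $\tau$-shifts of size $T^{-1}$. Each term should reduce to the $H^{s,b}$ estimates with $\langle\xi\rangle^s$ in place of $\langle|\tau|+|\xi|\rangle^s$, with an additional harmless factor $\langle T\lambda\rangle$-type weight on the shift. The delicate bookkeeping is the gain $T^{\epsilon/4}$, which comes from $\eta(T^{1/4}\Lambda_-)$ restricting to $\Lambda_-\lesssim T^{-1/4}$ with exponent $b+\epsilon-1$ versus $b-1$. Ensuring that the $\Lambda_+^s$ weight does not destroy this gain is the part I would verify most carefully. I would handle it by splitting into $|\xi|\lesssim T^{-1}$ and $|\xi|\gg T^{-1}$; in the second region, $\langle|\tau|+|\xi|\rangle\sim\langle\xi\rangle$ holds uniformly across the $\hat\theta_T$ convolution.
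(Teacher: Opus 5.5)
Your decomposition into $\theta w_0$, $\theta_T w_1$, $\theta_T w_2$ is the paper's. Your free-wave bound and your symbol bound $\|\square^{-1}F_2\|_{X^{s,b}}\lesssim\|F_2\|_{X^{s-1,b-1}}$ also match the paper. Your Taylor/Duhamel splitting of $w_1$ is the Selberg expansion the paper imports as Lemma~\ref{w1}.

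\textbf{The absorption of $\theta_T$ fails.} You assert that multiplication by $\theta_T$ is bounded on $X^{s,b}$ uniformly in $T$ for $b>\frac12$; this is false. Take $u=\theta(t)e^{itD}f$ with $\operatorname{supp}\hat f\subset\{|\xi|\gg T^{-1}\}$. Then $\|u\|_{X^{s,b}}\lesssim\|f\|_{H^s}$, but for small $T$ one gets $\|\theta_T u\|_{X^{s,b}}\sim T^{\frac12-b}\|f\|_{H^s}$. Restricting to the support of $F_2$ does not help, because $\hat\theta_T$ spreads modulations over $|\sigma|\lesssim T^{-1}\gg T^{-1/4}$. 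Take $\tilde w_2=\mathbf 1_{[M,2M]}(\tau-|\xi|)\hat\phi(\xi)$ with $M\sim T^{-1/4}$ and $\operatorname{supp}\hat\phi\subset\{|\xi|\gg T^{-1}\}$. Then $\|\theta_T w_2\|_{X^{s,b}}\sim (MT)^{\frac12-b}\|w_2\|_{X^{s,b}}$, so the bound for $\theta_T w_2$ fails by a factor $T^{-\frac34(b-\frac12)}$. At $|\xi|\lesssim 1$ the $\Lambda_+^s$ weight makes the loss $(MT)^{\frac12-s-b}$, so it is not harmless either. The paper passes over this point with the unproved pointwise claim $\theta_T w_2\precsim T^{\epsilon/4}\Lambda_+^{-1}\Lambda_-^{\epsilon-1}F$, so you cannot borrow the step from there.

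\textbf{The deeper problem is the initial data of $w_2$.} Since $w_2=\square^{-1}F_2$ does not vanish at $t=0$, the $w$ of \eqref{1defw} does not have data $(f,g)$. Your appeal to Lemma~\ref{energy2} for uniqueness and regularity also fails for $\frac12<s<1$: there $H^{s-1,b+\epsilon-1}\subsetneq X^{s-1,b+\epsilon-1}$, with the difference at $|\tau|\gg|\xi|$, and $X^{s,b}\not\hookrightarrow\mathcal H^{s,b}$. This is not a technicality, as the following example shows.
\begin{itemize}
\item Take $f=g=0$ and $\tilde F(\tau,\xi)=\mathbf 1_{[K,2K]}(\tau)\hat\chi(\xi)$ with $\operatorname{supp}\hat\chi\subset\{|\xi|\le1\}$.
\item Then $\|F\|_{X^{s-1,b+\epsilon-1}}\sim K^{s+b+\epsilon-\frac32}$, while $\|F\|_{H^{s-1,b+\epsilon-1}}\sim K^{b+\epsilon-\frac12}$; this is why Selberg's lemma is not contradicted.
\item Write $F=F_0(t)\chi(x)$. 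The factor $F_0$ concentrates on $|t|\lesssim K^{-1}$, and $\int_0^\infty F_0\,dt=c\neq0$ independently of $K$ (by Frullani, $c=\frac{\ln 2}{2\pi i}$ with the paper's normalization).
\item Hence the actual solution satisfies $w(T)\to c\,D^{-1}\sin(TD)\chi$ as $K\to\infty$.
\end{itemize}
Since $X^{s,b}\hookrightarrow C(\mathbb R;H^s)$, no bound of the form \eqref{energy11}, even with a $T$-dependent constant, holds for the solution when $s+b+\epsilon<\frac32$. That range includes $s,b$ near $\frac12$ with small $\epsilon$, which is the regime relevant to Theorem~\ref{main thm 1dim}. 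So what is missing is not bookkeeping. You would need a stronger norm on $F$ at high modulation and low frequency, or a different iteration space.

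\textbf{The low-frequency part of $w_1$ is also unhandled.} In your region $|\xi|\lesssim T^{-1}$, the output weight after convolution with $\hat\theta_T$ is of size $T^{-s}$, not $\langle\xi\rangle^s$. Only the $O(t^2)$ vanishing of the Duhamel term compensates for this. That is what the paper's factors $\|t^j\theta_T\|_{H^{s+b}}\lesssim T^{j+\frac12-s-b}$ record, and it closes only for $s+b$ near $1$. For $s+b>\frac52$, the bound for the $w$ of \eqref{1defw} already fails when $F=\varphi(t)\psi(x)$ has compactly supported Fourier transform.
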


	We first establish the corresponding estimates for $\theta(t) w_0, \theta(t) w_1, \theta(t) w_2$ sequentially.
	
	{\bfseries Step 1: the estimates for $\theta(t) w_0 = \theta(t) \cos (t D) f + D^{-1} \sin (t D) g$.}
	\begin{proposition}\label{w0}
		Let \( s > \frac12, b > \frac12, \theta \in C_c^\infty(\mathbb{R}) \) and \( (f,g) \in H^s \times H^{s-1} \), then
		\begin{align}
			\|\theta(t)e^{\pm \operatorname{i}tD}f\|_{X^{s,b}} 
			&\lesssim \|\theta\|_{H^{s+b}} \|f\|_{H^s},
			\label{f}
			\\
			\|\theta(t)\cos(tD) f\|_{X^{s,b}} 
			&\lesssim \|\theta\|_{H^{s+b}} \|f\|_{H^s}.
			\label{w0f}
		\end{align}
		If \( |r| \leq 1 \), we have
		\begin{equation}\label{g}
			\|\theta(t)e^{\operatorname{i} r tD}g\|_{X^{s,b}} \lesssim \|\theta\|_{H^{s+b}} \|g\|_{H^{s}}.
		\end{equation}
		Moreover, we get
		\begin{equation}\label{w0g}
			\|\theta(t)D^{-1}\sin(tD) g\|_{X^{s,b}} \lesssim (\|\theta\|_{H^{s+b}} + \|t\theta\|_{H^{s+b}}) \|g\|_{H^{s-1}}.
		\end{equation}
	\end{proposition}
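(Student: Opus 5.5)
The plan is to compute the space-time Fourier transform of $\theta(t)e^{\mathrm{i} rtD}g$ explicitly and compare the $X^{s,b}$ weight with the available weights. In the paper's sign convention,
\[
\widetilde{\theta(t)e^{\mathrm{i} rtD}g}(\tau,\xi)=\widehat\theta(\tau- r|\xi|)\,\widehat g(\xi),
\]
up to the sign of $r$, which is irrelevant since $|r|\le1$. Substituting $\sigma=\tau-r|\xi|$ and using Plancherel, the $X^{s,b}$ norm squared becomes
\[
\int\!\!\int \langle |\sigma+r|\xi||+|\xi|\rangle^{2s}\,\langle |\sigma+r|\xi||-|\xi|\rangle^{2b}\,|\widehat\theta(\sigma)|^2|\widehat g(\xi)|^2\,d\sigma\,d\xi .
\]
So everything reduces to a pointwise bound on the weight $W(\sigma,\xi)=\langle |\tau|+|\xi|\rangle^{s}\langle|\tau|-|\xi|\rangle^{b}$. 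The target is a product of a function of $\sigma$, to be absorbed by $\|\theta\|_{H^{s+b}}$, and a function of $\xi$, to be absorbed by $\|g\|_{H^s}$.

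\textbf{Step 1: the $\Lambda_+$ factor.} Since $|\tau|\le|\sigma|+|r||\xi|\le|\sigma|+|\xi|$, we get
\[
\langle|\tau|+|\xi|\rangle^s\lesssim\langle\sigma\rangle^s\langle\xi\rangle^s .
\]
This costs exactly $\langle\xi\rangle^s$ on $g$ and $\langle\sigma\rangle^s$ on $\theta$. It requires $s\ge0$, which holds since $s>\frac12$.

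\textbf{Step 2: the $\Lambda_-$ factor.} Write
\[
\big||\tau|-|\xi|\big|\le\big||\tau|-|r||\xi|\big|+(1-|r|)|\xi|\le|\sigma|+(1-|r|)|\xi| .
\]
When $|r|=1$, which is the case in \eqref{f} and the main case, this gives $\langle|\tau|-|\xi|\rangle^b\lesssim\langle\sigma\rangle^b$. Combined with Step~1,
\[
W\lesssim\langle\sigma\rangle^{s+b}\langle\xi\rangle^s ,
\]
and integrating yields $\|\theta\|_{H^{s+b}}\|g\|_{H^s}$. This gives \eqref{g} for $|r|=1$ directly.

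\textbf{Step 3: $|r|<1$ (main obstacle).} Here the extra term $(1-|r|)|\xi|$ appears in the $\Lambda_-$ weight. It cannot be placed on $\theta$, because it depends on $\xi$, not on $\sigma$. I will first try to exploit that this term carries the factor $(1-|r|)$ and to split into two regions:
\begin{itemize}
\item the region $|\sigma|\gtrsim(1-|r|)|\xi|$, where the Step~2 bound persists;
\item the complementary region $|\sigma|\ll(1-|r|)|\xi|$, where the weight is $\sim(1-|r|)^b|\xi|^b$.
\end{itemize}
One must then check whether the decay of $\widehat\theta$ alone suffices in the second region. This is where I expect the argument to be delicate. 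The test case $r=0$, where the function is $\theta(t)g(x)$, shows that the weight there is genuinely of size $\langle\xi\rangle^{s+b}$. So the region split has to be scrutinized carefully, and the bound must be closed with the norms appearing in \eqref{g}.

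\textbf{Step 4: the remaining estimates.} Once \eqref{g} is in hand, the other bounds follow quickly.
\begin{itemize}
\item \eqref{f} is the case $r=\pm1$.
\item \eqref{w0f} follows from $\cos(tD)=\tfrac12(e^{\mathrm{i} tD}+e^{-\mathrm{i} tD})$.
\item For \eqref{w0g}, split $g$ into low frequencies $|\xi|\le1$ and high frequencies $|\xi|>1$. On high frequencies, write $D^{-1}\sin(tD)g=\tfrac{1}{2\mathrm{i}}(e^{\mathrm{i} tD}-e^{-\mathrm{i} tD})D^{-1}g$ and apply \eqref{f} with $\|D^{-1}g\|_{H^s}\sim\|g\|_{H^{s-1}}$. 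On low frequencies, use $D^{-1}\sin(tD)=t\int_0^1\cos(rtD)\,dr$ and Minkowski's inequality in $r$; this produces the factor $\|t\theta\|_{H^{s+b}}$. Since $|\xi|\le1$ there, $\|g\|_{H^s}\sim\|g\|_{H^{s-1}}$, and the $\xi$-dependent term from Step~3 is bounded, so no loss occurs.
\end{itemize}
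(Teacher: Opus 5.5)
\textbf{Genuine gap in Step 3: the estimate \eqref{g} is false for $|r|<1$ and cannot be closed.}

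Your Steps 1, 2 and 4 are sound. They use the same Fourier-side computation and the same low/high frequency split as the paper, and they prove \eqref{f}, \eqref{w0f}, \eqref{w0g}, and \eqref{g} for $|r|=1$. No region split will close Step 3, because for $|r|<1$ and $g$ only in $H^s$ the estimate \eqref{g} does not hold. Your $r=0$ test case is an actual counterexample, not merely a delicate case.

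To see this, take $r=0$ and let $\widehat g$ be the indicator function of $[N,N+1]$, so that $\|g\|_{H^s}\sim N^s$.
\begin{itemize}
\item On the set $\{|\tau|\le N/2,\ N\le\xi\le N+1\}$ one has $\langle|\tau|+|\xi|\rangle^{s}\langle|\tau|-|\xi|\rangle^{b}\gtrsim N^{s+b}$.
\item Also $\int_{|\tau|\le N/2}|\widehat\theta(\tau)|^2\,d\tau\to\|\widehat\theta\|_{L^2}^2>0$.
\item Hence $\|\theta(t)g\|_{X^{s,b}}\gtrsim N^{b}\|g\|_{H^s}$, which is unbounded in $N$.
\end{itemize}
For fixed $0<|r|<1$ the same argument gives a loss of $((1-|r|)N)^b$. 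The bulk of $\widehat\theta(\sigma)$ sits at bounded $\sigma$, that is, at $|\tau|\approx|r||\xi|$, where $\bigl||\tau|-|\xi|\bigr|\approx(1-|r|)|\xi|$. For large $|\xi|$ this lies entirely inside your second region, so no decay of $\widehat\theta$ is available there.

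The paper's proof does not avoid this either. It passes through the pointwise bound $1+\bigl||\sigma+r|\xi||-|\xi|\bigr|\le 1+|\sigma|$, which already fails for $|r|<1$ at $\sigma=0$. You should not try to reproduce that step.

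\textbf{What is true.} Your own Step 2 inequality gives, for all $|r|\le 1$,
\[
1+\bigl||\tau|-|\xi|\bigr|\le(1+|\sigma|)\bigl(1+(1-|r|)|\xi|\bigr).
\]
Consequently:
\begin{itemize}
\item $\|\theta(t)e^{\operatorname{i} rtD}g\|_{X^{s,b}}\lesssim\|\theta\|_{H^{s+b}}\|g\|_{H^s}$ whenever $\operatorname{supp}\widehat g\subseteq\{|\xi|\le 1\}$;
\item if instead $\operatorname{supp}\widehat g\subseteq\{|\xi|\le c\}$, the same bound holds with an extra factor $(1+c)^b$;
\item for general $g$, the bound holds with $\|g\|_{H^{s+b}}$ in place of $\|g\|_{H^s}$.
\end{itemize}
State \eqref{g} in this frequency-localized form. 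It is exactly what your low-frequency argument for \eqref{w0g} uses, so with that correction your Step 4 is a complete proof of \eqref{w0g}. Keep in mind that any later use of \eqref{g} on functions supported in $|\xi|<c$ with $c$ large must carry the factor $(1+c)^b$.
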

	
	\begin{proof}
		The Fourier transform of \(\theta(t)e^{\pm itD}f\) is \(\widehat{\theta}(\tau \mp |\xi|)\widehat{f}(\xi)\), and
		\begin{align*}
			\|\theta(t)e^{\pm \operatorname{i}tD}f\|_{X^{s,b}}^2
			& = \int_{\mathbb{R}^{1+d}} (1 + |\tau| + |\xi|)^{2s} (1 + ||\tau| - |\xi|| )^{2b} |\widehat{\theta}(\tau \mp |\xi|)\widehat{f}(\xi)|^2  d\tau d\xi 
			\\
			& \overset{\sigma = \tau \mp |\xi|}{\leq} \int_{\mathbb{R}^{1+d}} (1+|\sigma \pm |\xi||+|\xi|)^{2s} (1+|\sigma|)^{2b} |\widehat{\theta}(\sigma)\widehat{f}(\xi)|^2 d\tau d\xi 
			\\
			& \leq \int_{\mathbb{R}^{1+d}} (1+2|\xi|)^{2s} (1+|\sigma|)^{2s+2b} |\widehat{\theta}(\sigma)\widehat{f}(\xi)|^2 d\tau d\xi 
			\\
			& \lesssim \|\theta\|_{H^{s+b}} \|f\|_{H^s}.
		\end{align*}
		This proves \eqref{f}, which in turn implies \eqref{w0f} by using the identity \( \cos(tD)f = \frac{1}{2}(e^{\operatorname{i}tD}f + e^{-\operatorname{i}tD}f) \).
		
		For \eqref{g}, since the sapce-time Fourier transform of $\theta(t)e^{\operatorname{i} r tD}g$ equals $\widehat{\theta}(\tau - r |\xi|)\widehat{g}(\xi)$, and
		\begin{align*}
			\|\theta(t)e^{\operatorname{i}\rho tD}g\|_{X^{s,b}}^2
			& = \int_{\mathbb{R}^{1+d}} (1 + |\tau| + |\xi|)^{2s} (1 + \big||\tau| - |\xi|\big| )^{2b} |\widehat{\theta}(\tau - r |\xi|)\widehat{g}(\xi)|^2  d\tau d\xi 
			\\
			& \overset{\sigma =  \tau - r |\xi|}{=} \int_{\mathbb{R}^{1+d}} (1+\big|\sigma + r |\xi|\big|+|\xi|)^{2s} (1+\big||\sigma + r |\xi||- |\xi|\big|)^{2b} |\widehat{\theta}(\sigma)\widehat{g}(\xi)|^2 d\tau d\xi
			\\
			& \leq \int_{\mathbb{R}^{1+d}} (1+|\sigma|+2|\xi|)^{2s} (1+|\sigma|)^{2b} |\widehat{\theta}(\sigma)\widehat{g}(\xi)|^2 d\tau d\xi
			\\
			& \lesssim \|\theta\|_{H^{s+b}} \|g\|_{H^{s}}.
		\end{align*}
		This gives \eqref{g}. For the estimate of another part of homogeneous solution to equation \eqref{w0g}, we give a decomposition $g = g_1 + g_2$ where
		\[
		\operatorname{supp} \widehat{g}_1 \subseteq \{ \xi : |\xi| < 1 \}, \quad \operatorname{supp} \widehat{g}_2 \subseteq \{ \xi : |\xi| \geq 1 \}.
		\]
		For $g_1$, since
		\[
		\theta(t)D^{-1}\sin(tD)g_1 = \int_0^1 t \theta(t) e^{\operatorname{i}(2\rho-1)tD}g_1 d\rho.
		\]
		Due to \eqref{g} and $\operatorname{supp} \widehat{g}_1 \subseteq \{ \xi : |\xi| < 1 \}$, we obtain
		\begin{equation} \label{g1}
			\|\theta(t)D^{-1}\sin(tD)g_1\|_{{X^{s,b}}} \lesssim \|t\theta(t)\|_{H^{s+b}} \|g_1\|_{H^{s-1}}.
		\end{equation}
		For $g_2$, we have
		\begin{equation} \label{g2}
			\begin{aligned}
				\|\theta(t)D^{-1}\sin(tD)g_2\|_{{X^{s,b}}} 
				& = \|\theta(t) \sin(tD)(D^{-1}g_2)\|_{{X^{s,b}}} 
				\\
				& \lesssim \|\theta(t)\|_{H^{s+b}} \|D^{-1}g_2\|_{H^s} 
				\\
				&\lesssim \|\theta(t)\|_{H^{s+b}} \|g_2\|_{H^{s-1}}.
			\end{aligned}
		\end{equation}
		Combining \eqref{g1} with \eqref{g2} shows that \eqref{w0g} holds. Therefore, the proof of Proposition~\ref{w0} is complete.
	\end{proof}

	{\bfseries Step 2: the estimates for} $w_2 = \Box^{-1} F_2 $, where $ F_2 = (1 - \eta(\Lambda_-))F $. We define
	\[
	\mathcal{N} = \{(\tau, \xi) \in \mathbb{R}^{1+d} : ||\tau| - |\xi|| < 1\}.
	\]
	Then  \( \operatorname{supp} \widetilde{F_2} \subseteq \mathbb{R}^{1+d} \setminus \mathcal{N} \), we have
	\begin{proposition} \label{w2}
		Assume $s > \frac12, b > \frac12$, then the following estimate holds:
		\begin{equation}\label{ew2}
			\|w_2\|_{X^{s,b}} \lesssim \| F_2 \|_{X^{s-1,b-1}}.
		\end{equation}
	\end{proposition}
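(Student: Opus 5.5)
This is a pure Fourier-multiplier bound, so I will argue directly on the Fourier side. By definition $\widetilde{w_2}(\tau,\xi) = -\widetilde{F_2}(\tau,\xi)/(\tau^2-|\xi|^2)$, up to the sign convention for $\square$. Since $\operatorname{supp}\widetilde{F_2}$ lies outside $\mathcal{N}$, there $||\tau|-|\xi||\ge 1$, so $w_2$ is a well-defined tempered distribution and no singularity on the cone occurs. The identity $|\tau^2-|\xi|^2| = (|\tau|+|\xi|)\,||\tau|-|\xi||$ gives
\[
\|w_2\|_{X^{s,b}}^2 = \int \frac{\langle |\tau|+|\xi|\rangle^{2s}\langle|\tau|-|\xi|\rangle^{2b}}{(|\tau|+|\xi|)^2\,||\tau|-|\xi||^2}\,|\widetilde{F_2}|^2\,d\tau\, d\xi .
\]
Comparing with $\|F_2\|_{X^{s-1,b-1}}^2 = \int \langle |\tau|+|\xi|\rangle^{2s-2}\langle|\tau|-|\xi|\rangle^{2b-2}|\widetilde{F_2}|^2$, the estimate reduces to the pointwise multiplier bounds
\[
\frac{\langle|\tau|-|\xi|\rangle}{||\tau|-|\xi||}\lesssim 1,\qquad \frac{\langle|\tau|+|\xi|\rangle}{|\tau|+|\xi|}\lesssim 1 \quad\text{on } \mathbb{R}^{1+d}\setminus\mathcal{N}.
\]

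The first bound holds because $||\tau|-|\xi||\ge1$ there, which gives $\langle x\rangle = 1+x\le 2x$. The second follows from $|\tau|+|\xi|\ge ||\tau|-|\xi||\ge 1$. This second point is the key difference from Selberg's $H^{s,b}$ setting: the weight $\langle |\tau|+|\xi|\rangle$ in $X^{s,b}$ controls the full $D_+$ factor, so no case split between $|\xi|\lesssim 1$ and $|\xi|\gtrsim 1$ is needed. In the $\mathcal{H}^{s,b}$ version one must separately compare $\langle\xi\rangle$ with $|\xi|$, but here the elliptic factor $|\tau|+|\xi|$ is automatically bounded below on the support. Multiplying the two bounds, squaring, and integrating gives \eqref{ew2} with an absolute constant. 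The hypotheses $s,b>\frac12$ are not actually needed for this step beyond fixing the spaces.

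If the cut-off in $F_2$ is $1-\eta(T^{1/4}\Lambda_-)$ rather than $1-\eta(\Lambda_-)$, the support condition becomes $\langle|\tau|-|\xi|\rangle\ge 2T^{-1/4}\ge 2$, since $\eta=1$ on $[-2,2]$ and $T<1$. So the support still lies in $\mathbb{R}^{1+d}\setminus\mathcal{N}$ and the same argument applies verbatim. The only real care needed is this support bookkeeping, namely checking that $\eta\equiv1$ on $[-2,2]$ forces $||\tau|-|\xi||\ge1$ on $\operatorname{supp}\widetilde{F_2}$. Once that is verified, there is no genuine obstacle.
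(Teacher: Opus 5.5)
Your proposal is correct and is essentially the paper's argument: write $\widetilde{w_2}=\widetilde{F_2}/(\tau^2-|\xi|^2)$ (up to sign), factor the symbol as $(|\tau|+|\xi|)\,\bigl||\tau|-|\xi|\bigr|$, and use $\bigl||\tau|-|\xi|\bigr|\ge 1$ on $\operatorname{supp}\widetilde{F_2}$ to absorb the inhomogeneous weights. Your explicit check of the two multiplier bounds, and of the support condition for either cutoff $\eta(\Lambda_-)$ or $\eta(T^{1/4}\Lambda_-)$, simply spells out what the paper's one-line computation leaves implicit.
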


	\begin{proof}
		Note that 
		\[
		\widetilde{w_2}(\tau, \xi) = (|\tau|^2 - |\xi|^2)^{-1} \widetilde{F_2}(\tau, \xi).
		\]
		Consequently, we can compute
		\begin{align*}
			\|w_2\|_{X^{s,b}}
			&\lesssim \| \langle |\tau| + |\xi| \rangle^s \langle |\tau| - |\xi| \rangle^b (|\tau| - |\xi|)^{-1} (|\tau| + |\xi|)^{-1} \widetilde{F_2}(\tau, \xi) \|_{L^2(\mathbb{R}^{1+d})} 
			\\
			&\lesssim \| \langle |\tau| + |\xi| \rangle^{s-1} \langle |\tau| - |\xi| \rangle^{b-1} \widetilde{F_2}(\tau, \xi) \|_{L^2(\mathbb{R}^{1+d})} 
			= \|F_2\|_{X^{s-1,b-1}}.
		\end{align*}
		Therefore, the proof of this proposition is complete.
	\end{proof}
	
	{\bfseries Step 3: the estimates for $\theta(t)w_1$.} We recall
	\[
	w_1= - \int_{0}^{t} D^{-1} \sin \left(\left(t-t^{\prime}\right) D\right) F_{1}\left(t^{\prime}\right) d t^{\prime} ,
	\]
	where $F_1 = \eta(\Lambda_{-}) F$, then \( \operatorname{supp} \widetilde{F_1} \subseteq 4 \sqrt{2} \mathcal{N} \). To bound $w_1$, let us first introduce the following decomposition.
	
	\begin{lemma}[\cite{Selberg}, Proposition 16]\label{w1}
		Let \( s \in \mathbb{R} \), \( b \in (\frac{1}{2}, 1) \) and \( c \) be a positive constant with \( c \geq 2 \). Suppose that
		\[
		2 + \bigl||\tau| - |\xi|\bigr| \leq c, \quad \text{for } (\tau, \xi) \in \operatorname{supp} \widetilde{F_1}. 
		\]
		Then there exist \( f_j^\pm \in H^s \), \( g_j \in C([0,1], H^{s-1}) \) for \( j \geq 1 \) such that
		\begin{gather*}
			\operatorname{supp} \widehat{f_j^\pm} \subseteq \{ \xi : |\xi| \geq c \}, 
			\\
			\operatorname{supp} \widehat{g_j(\rho)} \subseteq \{ \xi : |\xi| < c \}, 
			\\
			\| f_j^\pm \|_{H^s}, \sup_{0 < \rho < 1} \| g_j(\rho) \|_{H^{s-1}} \lesssim c^{j-\frac{1}{2}} \| F_1 \|_{s-1,0}.
		\end{gather*}
		and
		\begin{equation}\label{dew1}
		w_1(t) = \sum_{j=1}^\infty \frac{t^{j+1}}{j!} \int_0^1 \mathrm{e}^{\operatorname{i}t(2\rho-1)D} g_j(\rho) d\rho
		+ \sum_{j=1}^\infty \frac{t^j}{j!} ( \mathrm{e}^{\operatorname{i}tD} f_j^+ + \mathrm{e}^{-\operatorname{i}tD} f_j^- )
		+ R_+(t) + R_-(t).
		\end{equation}
		\( R_+(t) \) and \( R_-(t) \) are given by
		\[
		\begin{aligned}
			\widehat{R_+(t)} (\xi) 
			& = -\frac{1}{4\pi |\xi|} \int_{-\infty}^0 \frac{\mathrm{e}^{\operatorname{i}t\tau} - \mathrm{e}^{\operatorname{i}t|\xi|}}{|\tau| + |\xi|} \widetilde{F_{1,2}}(\tau, \xi) d\tau, 
			\\
			\widehat{R_-(t)} (\xi)
			& = -\frac{1}{4\pi |\xi|} \int_0^\infty \frac{\mathrm{e}^{\operatorname{i}t\tau} - \mathrm{e}^{-\operatorname{i}t|\xi|}}{|\tau| + |\xi|} \widetilde{F_{1,2}}(\tau, \xi) d\tau,
		\end{aligned}
		\]
		where \( \operatorname{supp} \widehat{F_{1,2}} \subseteq \{ \xi : |\xi| \geq c \} \).
	\end{lemma}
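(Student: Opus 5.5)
This decomposition is Selberg's Proposition 16, and its proof does not involve any $X^{s,b}$ weight. The idea is to Taylor-expand the Duhamel formula in the variable $t(\tau\pm|\xi|)$. Write $\widetilde{F_1}(\tau,\xi)$ and use the Fourier representation
\[
\widehat{w_1(t)}(\xi) = -\frac{1}{2\pi}\int_{\mathbb{R}} \frac{1}{2|\xi|}\Bigl(\frac{e^{it\tau}-e^{it|\xi|}}{\tau-|\xi|} - \frac{e^{it\tau}-e^{-it|\xi|}}{\tau+|\xi|}\Bigr)\widetilde{F_1}(\tau,\xi)\,d\tau ,
\]
which follows by inserting $\sin((t-t')D)=\frac{1}{2i}(e^{i(t-t')D}-e^{-i(t-t')D})$ and integrating in $t'$.

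Split $F_1=F_{1,1}+F_{1,2}$ according to $|\xi|<c$ and $|\xi|\ge c$.

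\emph{High frequencies} $|\xi|\ge c$. Here the support condition forces $\bigl||\tau|-|\xi|\bigr|\le c$, so $\tau$ has the sign of one of $\pm|\xi|$ and $|\tau\mp|\xi||\le c$ for the matching sign.
\begin{itemize}
\item For $\tau>0$, write $e^{it\tau}-e^{it|\xi|}=e^{it|\xi|}\sum_{j\ge1}\frac{(it(\tau-|\xi|))^j}{j!}$. Dividing by $\tau-|\xi|$ gives $f_j^+$ with $\widehat{f_j^+}(\xi)=c_j\int (\tau-|\xi|)^{j-1}|\xi|^{-1}\widetilde{F_{1,2}}\,d\tau$ (up to the constant $i^j$ and the $t^j/j!$ factor). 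Cauchy--Schwarz over the $\tau$-interval of length $\lesssim c$ gives $\|f_j^+\|_{H^s}\lesssim c^{j-1}c^{1/2}\|F_1\|_{s-1,0}$, using $\langle\xi\rangle^{s}|\xi|^{-1}\sim\langle\xi\rangle^{s-1}$.
\item The nonresonant piece $\frac{e^{it\tau}-e^{-it|\xi|}}{\tau+|\xi|}$ with $\tau>0$ has denominator $\gtrsim|\xi|$. It is kept intact as $R_-$, and symmetrically $R_+$ is the corresponding piece for $\tau<0$.
\item $f_j^-$ is obtained in the same way from $\tau<0$.
\end{itemize}

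\emph{Low frequencies} $|\xi|<c$. The kernel $D^{-1}\sin((t-t')D)$ is entire in $D$, so I expand $e^{it\tau}$ around $t'$ differently. I use the identity
\[
\frac{\sin(tD)}{D}=t\int_0^1 e^{it(2\rho-1)D}\,d\rho
\]
together with the expansion of $\int_0^t \frac{\sin((t-t')D)}{D}e^{it'\tau}\,dt'$ in powers of $t$. Its Taylor coefficients are polynomials in $\tau$ of degree $j-1$, and they are controlled by $c^{j-1}$ since $|\tau|\le|\xi|+c\le 2c$. This yields $g_j(\rho)$ with the stated support and the bound $\sup_\rho\|g_j(\rho)\|_{H^{s-1}}\lesssim c^{j-1/2}\|F_1\|_{s-1,0}$, again by Cauchy--Schwarz in $\tau$ over an interval of length $O(c)$. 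At low frequency $\langle\xi\rangle^{s-1}$ and $\langle\xi\rangle^{s}$ are comparable, so no weight issue arises.

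The main obstacle is bookkeeping: reconciling the resonant $\tau$-regions with $R_\pm$ so that every term in \eqref{dew1} is accounted for exactly, and verifying that the powers of $t$ match ($t^{j+1}$ versus $t^j$). Convergence of the sums follows from the factorial against the geometric growth $c^j$ once $t\lesssim1$.
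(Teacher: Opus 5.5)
Your proposal is correct and reconstructs the argument the paper defers to (the paper's proof is only the citation to Section 3.6.3 of \cite{Selberg}): split at $|\xi|=c$, Taylor-expand only the resonant divided difference at high frequency and keep the nonresonant one as $R_\pm$, and at low frequency insert $\frac{\sin((t-t')D)}{D}=(t-t')\int_0^1 e^{i(t-t')(2\rho-1)D}\,d\rho$, factor out $e^{it(2\rho-1)D}$, and expand $\int_0^t (t-t')e^{it'\mu}\,dt'=\sum_{j\ge1}\frac{(i\mu)^{j-1}t^{j+1}}{(j+1)!}$ with $\mu=\tau-(2\rho-1)|\xi|$; this last expansion is what produces the $t^{j+1}$ powers, so your low-frequency step is sound once stated this way. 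The one slip is the coefficient size: $|\mu|\le|\tau|+|\xi|<3c$ gives $(3c)^{j-1}$ rather than $c^{j-1}$, a geometric factor that is harmless against $1/j!$ in every later use and that the lemma's own bound $\lesssim c^{j-\frac12}$ also silently absorbs.
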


	\begin{proof}
		See Section 3.6.3 in \cite{Selberg}.
	\end{proof}
	
	Based on this decomposition, we prove the estimate for $\theta(t) w_1$.
	
	\begin{proposition}\label{ew1}
		Assume $s > \frac12, b \in (\frac{1}{2}, 1)$, and
		\begin{equation}\label{c}
			2 + \bigl||\tau| - |\xi|\bigr| \leq c, \quad \text{for } (\tau, \xi) \in \operatorname{supp} \widetilde{F_1},
		\end{equation} 
		then
		\begin{equation}\label{thetaw1}
			\|\theta(t) w_1\|_{X^{s,b}} \leq C \|F_1\|_{X^{s-1,b-1}},
		\end{equation}
		where $w_1$ is given by \eqref{dew1} and 
		\[
		\begin{aligned}
			C
			& =  \sum_{j=1}^{\infty} \frac{ c^{j + \frac32 -s -b} \| t^{j} \theta \|_{H^{s+b}}}{j!} +  \sum_{j=1}^{\infty} \frac{ c^{j + \frac52 -s -b} \| t^{j+1} \theta \|_{H^{s+b}}}{j!} 
			\\
			& + c^{\frac52 -s-b} ( \| t \theta \|_{L^2} + \|\theta\|_{\dot{H}^{b-1}} ) + c^{\frac52-2s-b} \|t \theta\|_{H^{s+b}}.
		\end{aligned}
		\]
	\end{proposition}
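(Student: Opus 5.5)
The plan is to insert the decomposition \eqref{dew1} of Lemma~\ref{w1} into $\theta(t)w_1$ and estimate the four families of terms separately in $X^{s,b}$, using Proposition~\ref{w0} for the free-wave pieces. We then convert the bounds on $\|F_1\|_{s-1,0}$ into $\|F_1\|_{X^{s-1,b-1}}$ using the support condition \eqref{c}. Since $\widetilde{F_1}$ lives where $||\tau|-|\xi||\le c$, on this support $\langle|\tau|-|\xi|\rangle^{1-b}\lesssim c^{1-b}$. Hence $\|F_1\|_{s-1,0}\lesssim c^{1-b}\|F_1\|_{s-1,b-1}$. Moreover, on this support $\langle|\tau|+|\xi|\rangle\sim\langle\xi\rangle$ up to a factor $c$ (for $|\xi|\ge c$ the two are comparable; for $|\xi|<c$ both are $\lesssim c$). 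This lets us pass between $\|F_1\|_{s-1,b-1}$ and $\|F_1\|_{X^{s-1,b-1}}$ at the cost of powers of $c$, which is where exponents like $c^{\frac32-s-b}$ are expected to arise.

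\textbf{Free-wave sums.} For the terms $\frac{t^j}{j!}\theta(t)e^{\pm itD}f_j^\pm$, apply \eqref{f} with $\theta$ replaced by $t^j\theta$. This gives $\|t^j\theta\|_{H^{s+b}}\|f_j^\pm\|_{H^s}\lesssim \|t^j\theta\|_{H^{s+b}}c^{j-\frac12}\|F_1\|_{s-1,0}$, and the conversion above then produces the factor $c^{j+\frac32-s-b}$ (after redistributing powers of $c$ between the $\Lambda^{s-1}$ and $\Lambda_+^{s-1}$ weights). For the $g_j$ terms, move the $\rho$-integral outside by Minkowski and apply \eqref{g} with $|2\rho-1|\le1$ and $\theta$ replaced by $t^{j+1}\theta$. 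Because $\widehat{g_j(\rho)}$ is supported in $|\xi|<c$, we have $\|g_j\|_{H^s}\lesssim c\|g_j\|_{H^{s-1}}$, which accounts for the extra power $c^{j+\frac52-s-b}$. Summing over $j$ gives the two series in $C$; these converge because $\|t^j\theta\|_{H^{s+b}}\lesssim 2^{j}j^{k}$ for some fixed $k$ and $c^j/j!$ is summable.

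\textbf{Remainders.} The main obstacle is $\theta R_\pm$, which is not a finite combination of free waves. I would follow Selberg's treatment and split the numerator as $e^{it\tau}-e^{\pm it|\xi|}$. The $e^{\pm it|\xi|}$ part is a free wave $\theta(t)e^{\pm itD}h_\pm$ with
\[
\widehat{h_\pm}(\xi)=\frac{1}{|\xi|}\int\frac{\widetilde{F_{1,2}}(\tau,\xi)}{|\tau|+|\xi|}\,d\tau,
\]
where $\tau$ ranges over a half-line. Cauchy--Schwarz in $\tau$ over the region $||\tau|-|\xi||\le c$ with $|\xi|\ge c$ bounds $\|h_\pm\|_{H^s}$ by $c^{\frac12}\cdot c^{-s}$-type factors times $\|F_1\|$. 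Together with $\|t\theta\|_{H^{s+b}}$ this gives the term $c^{\frac52-2s-b}\|t\theta\|_{H^{s+b}}$. The $e^{it\tau}$ part equals $\theta(t)$ times an inhomogeneous piece whose space-time Fourier transform is a convolution in $\tau$ of $\widehat\theta$ with $\widetilde{F_{1,2}}/(|\xi|(|\tau|+|\xi|))$. Here I would write $(e^{it\tau}-e^{it|\xi|})=it(\tau-|\xi|)\int_0^1e^{it(\ldots)}$ only when $|\tau\mp|\xi||\le1$, and otherwise estimate directly. The $\Lambda_-^b$ weight falling on $\widehat\theta$ is controlled by $\|\theta\|_{\dot H^{b-1}}$ and $\|t\theta\|_{L^2}$ after Young's inequality. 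Since $|\xi|\ge c$, the outcome is the factor $c^{\frac52-s-b}$ multiplying $\|t\theta\|_{L^2}+\|\theta\|_{\dot H^{b-1}}$. I expect the delicate bookkeeping to be confirming that the $X^{s,b}$ weight $\langle|\tau|+|\xi|\rangle^s$ (rather than $\langle\xi\rangle^s$) costs nothing here, since $|\tau|\lesssim|\xi|+c\lesssim|\xi|$ on the support.
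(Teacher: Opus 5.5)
Your treatment of the two series in \eqref{dew1} matches the paper's: \eqref{f} with $t^j\theta$ in place of $\theta$ for the $f_j^\pm$ terms; Minkowski in $\rho$, \eqref{g} with $t^{j+1}\theta$ and $\|g_j\|_{H^s}\lesssim c\|g_j\|_{H^{s-1}}$ for the $g_j$ terms; then $\|F_1\|_{s-1,0}\lesssim c^{2-s-b}\|F_1\|_{X^{s-1,b-1}}$ from \eqref{c}.

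\textbf{The gap is in $R_\pm$.} Splitting $e^{it\lambda}-e^{it|\xi|}$ into a free wave $\theta(t)e^{itD}h_+$ and an inhomogeneous piece throws away the one cancellation that produces a factor of $t$. Neither piece is then controlled by the stated $C$.

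\begin{itemize}
\item By \eqref{f}, the free wave costs $\|\theta\|_{H^{s+b}}\|h_+\|_{H^s}$, not $\|t\theta\|_{H^{s+b}}$. Nothing in that step creates a factor $t$, so the claimed term $c^{\frac52-2s-b}\|t\theta\|_{H^{s+b}}$ has no basis.
\item In the inhomogeneous piece there is no source for $\|t\theta\|_{L^2}$. Moreover, at output frequencies $|\tau|\gg|\xi|$ the weight $\langle|\tau|+|\xi|\rangle^s\langle|\tau|-|\xi|\rangle^b\sim\langle\tau-\lambda\rangle^{s+b}$ falls entirely on $\widehat\theta(\tau-\lambda)$, which again forces $\|\theta\|_{H^{s+b}}$.
\item Your closing remark that $\langle|\tau|+|\xi|\rangle^s$ costs nothing because $|\tau|\lesssim|\xi|$ confuses two supports. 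That bound holds for the input frequency $\lambda$ in $\operatorname{supp}\widetilde{F_{1,2}}$, but not for the output frequency of $\widetilde{\theta R_+}$, which is unbounded in $\tau$ after convolution with $\widehat\theta$.
\item The Taylor expansion ``when $|\tau\mp|\xi||\le 1$'' is vacuous here. On the support of the $R_+$ integrand, $\lambda\le0$ gives $\bigl|\lambda-|\xi|\bigr|=|\lambda|+|\xi|\ge c\ge2$.
\end{itemize}

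This is not cosmetic. The proposition is applied with $\theta_T$, where $\|\theta_T\|_{H^{s+b}}\sim T^{\frac12-s-b}\to\infty$ while $\|t\theta_T\|_{H^{s+b}}\sim T^{\frac32-s-b}$. Concretely, take $F_1=F_{1,2}$ with $\widetilde{F_{1,2}}\ge0$ concentrated at $|\xi|\sim c\sim T^{-1/4}$ and $\bigl|\lambda+|\xi|\bigr|\le1$.
\begin{itemize}
\item On $\operatorname{supp}\theta_T$ one has $|t|(|\lambda|+|\xi|)\ll1$, so $\theta_TR_+$ behaves like $t\theta_T(t)e^{itD}k$ and is small.
\item By contrast, $\|\theta_Te^{itD}h_+\|_{X^{s,b}}\gtrsim T^{\frac12-s-b}\|h_+\|_{L^2}$, which exceeds $\|F_1\|_{X^{s-1,b-1}}$ by a factor $T^{\frac34-\frac34s-b}\to\infty$.
\end{itemize}
So the two pieces are individually large, and only their difference is small.

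\textbf{The missing idea} is to keep the difference intact and apply the mean value theorem on the Fourier side. For $\lambda\le0$ one has $\tau-\lambda=(\tau-|\xi|)+(|\lambda|+|\xi|)$, so
\[
\frac{\widehat\theta(\tau-\lambda)-\widehat\theta(\tau-|\xi|)}{|\lambda|+|\xi|}=\int_0^1\widehat\theta^{\,\prime}\bigl(\tau-|\xi|+\rho(|\lambda|+|\xi|)\bigr)\,d\rho ,
\]
and $\widehat\theta^{\,\prime}$ is, up to a constant, the Fourier transform of $t\theta$. Then split the \emph{output} frequency into $I=\{\tau:\bigl|\tau-|\xi|\bigr|<2(|\lambda|+|\xi|)\}$ and its complement.

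\begin{itemize}
\item On $I$ one has $|\tau|\lesssim|\xi|$, so the weight is $\lesssim\langle\xi\rangle^s\bigl(1+\bigl||\tau|-|\xi|\bigr|^b\bigr)$. The $1$, combined with the identity, gives $\|t\theta\|_{L^2}$. For the $b$-part, bound the two terms separately using $\bigl||\tau|-|\xi|\bigr|^b(|\lambda|+|\xi|)^{-1}\lesssim(|\lambda|+|\xi|)^{b-1}\lesssim|\tau-\lambda|^{b-1}$, resp.\ $\bigl|\tau-|\xi|\bigr|^{b-1}$ (here $b<1$); this gives $\|\theta\|_{\dot H^{b-1}}$.
\item Off $I$ the whole weight is $\lesssim\langle\tau-|\xi|+\rho(|\lambda|+|\xi|)\rangle^{s+b}$, and the identity turns it into $\|t\theta\|_{H^{s+b}}$.
\end{itemize}

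Finally, apply Minkowski in $\tau$, Cauchy--Schwarz in $\lambda$ over an interval of length $\lesssim c$, and $|\xi|\ge c$. This gives $\bigl(c^{\frac12}(\|t\theta\|_{L^2}+\|\theta\|_{\dot H^{b-1}})+c^{\frac12-s}\|t\theta\|_{H^{s+b}}\bigr)\|F_{1,2}\|_{s-1,0}$. The same $c^{2-s-b}$ conversion then yields the last two terms of $C$.
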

	
	\begin{proof}
		Lemma~\ref{w1} gives a decomposition $w_1 = w_{1,1} + w_{1,2}$ where $\operatorname{supp} w_{1,1} \subseteq \{\xi: |\xi| < c \} $ and $\operatorname{supp} w_{1,2} \subseteq \{\xi: |\xi| \geq c \} $.
		
		For low frequency part, 
		\[
		w_{1,1}(t) = \sum_{j=1}^{\infty} \frac{t^{j+1}}{j!} \int_{0}^{1} e^{\operatorname{i}t(2\rho-1)D} g_j(\rho) d\rho,
		\]
		by \eqref{g} and $\| g_j(\rho) \|_{H^{s-1}} \lesssim c^{j-\frac{1}{2}} \| F_1 \|_{s-1,0}$, we have
		\[
		\begin{aligned}
			\| \theta w_{1,1} \|_{X^{s,b}}
			& \leq \int_{0}^{1} \left\| \sum_{j=1}^{\infty} \frac{\theta(t) t^{j+1}}{j!} e^{\operatorname{i}t(2\rho-1)D} g_j(\rho) \right\|_{X^{s,b}} d\rho
			\\
			& \lesssim \left( \sum_{j=1}^{\infty} \frac{\| t^{j+1} \theta \|_{H^{s+b}}}{j!} \right) \sup_{0 \leq \rho \leq 1} \| g_j(\rho) \|_{H^s} 
			\\
			& \lesssim \left( \sum_{j=1}^{\infty} \frac{\| t^{j+1} \theta \|_{H^{s+b}}}{j!} \right) \cdot c \cdot \sup_{0 \leq \rho \leq 1} \| g_j(\rho) \|_{H^{s-1}}
			\\
			& \lesssim \left( \sum_{j=1}^{\infty} \frac{ c^{j + \frac12} \| t^{j+1} \theta \|_{H^{s+b}}}{j!} \right) \|F_1\|_{s-1,0}
		\end{aligned}
		\]
		Since \( \operatorname{supp} \widetilde{F_1} \subseteq 4 \sqrt{2} \mathcal{N} \), then $\bigl| |\tau| - |\xi| \bigr| \lesssim c$, this means $|\tau| \sim |\xi|$ and $\langle |\tau| + |\xi| \rangle \lesssim c \langle \xi \rangle$. For $s$ around $\frac12 +$, we have
		\[
		\langle \xi \rangle ^{s-1} \lesssim c^{1-s} \langle |\tau| + |\xi| \rangle ^{s-1}.
		\]
		Thus we get
		\begin{equation}\label{ew11}
			\begin{aligned}
				\| \theta w_{1,1} \|_{X^{s,b}}
				& \lesssim \left( \sum_{j=1}^{\infty} \frac{ c^{j + \frac12} \| t^{j+1} \theta \|_{H^{s+b}}}{j!} \right) \|F_1\|_{s-1,0}
				\\
				& \lesssim \left( \sum_{j=1}^{\infty} \frac{ c^{j + \frac52 -s -b} \| t^{j+1} \theta \|_{H^{s+b}}}{j!} \right) \|F_1\|_{X^{s-1,b-1}}.
			\end{aligned}
		\end{equation}
		
		For high frequency part,
		\[
		w_{1,2}(t) = \sum_{j=1}^{\infty} \frac{t^j}{j!} \left( \mathrm{e}^{\operatorname{i}tD} f_j^+ + \mathrm{e}^{-\operatorname{i}tD} f_j^- \right) + R_+(t) + R_-(t).
		\]
		First, by \eqref{f} and $\| f_j^\pm \|_{H^s} \lesssim c^{j-\frac{1}{2}} \| F_1 \|_{s-1,0}$ we have
		\begin{equation}\label{ew12}
			\begin{aligned}
				\left\| \theta(t) \sum_{j=1}^{\infty} \frac{t^j}{j!} \left( \mathrm{e}^{\operatorname{i}tD} f_j^+ + \mathrm{e}^{-\operatorname{i}tD} f_j^- \right) \right\|_{X^{s,b}} 
				& \lesssim \left( \sum_{j=1}^{\infty} \frac{ c^{j - \frac12} \| t^{j} \theta \|_{H^{s+b}}}{j!} \right) \|F_1\|_{s-1,0}
				\\
				& \lesssim \left( \sum_{j=1}^{\infty} \frac{ c^{j + \frac32 -s -b} \| t^{j} \theta \|_{H^{s+b}}}{j!} \right) \|F_1\|_{X^{s-1,b-1}}.
			\end{aligned}
		\end{equation}
		Next, since
		\[
		\widehat{\theta R_+}(\tau, \xi) = -\frac{1}{4\pi |\xi|} \int_{-\infty}^0 \frac{\widehat{\theta}(\tau - \lambda) - \widehat{\theta}(\tau - |\xi|)}{|\lambda| + |\xi|} \widehat{F_{1,2}}(\lambda, \xi) d\lambda,
		\]
		where \( \operatorname{supp} \widehat{F_{1,2}} \subseteq \{ \xi : |\xi| \geq c \} \) and \( \bigl| |\lambda| - |\xi| \bigr| \lesssim c\), it follows from Minkowski's inequality that
		\[
		\|\theta R_+ \|_{X^{s, b}} \lesssim \int_{-\infty}^0 \left\| B(\lambda, \xi) |\xi|^{-1} \widehat{F_{1,2}}(\lambda, \xi) \right\|_{L_\xi^2} d\lambda, 
		\]
		where
		\[
		B = \left\| (1 + |\tau| + |\xi|)^s (1 + \bigl| |\tau| - |\xi| \bigr|)^b \frac{\widehat{\theta}(\tau - \lambda) - \widehat{\theta}(\tau - |\xi|)}{|\lambda| + |\xi|} \right\|_{L^2_\tau}.
		\]
		Note that
		\[
		\frac{\widehat{\theta}(\tau - \lambda) - \widehat{\theta}(\tau - |\xi|)}{|\lambda| + |\xi|}
		= \int_{0}^{1} \widehat{\theta}^{\prime}(\tau - |\xi| + \rho(|\lambda| + |\xi|)) d\rho.
		\]
		To estimates $B$, we split the time and space frequency into two parts : $B_1$ and $B_2$. We define
		\[
		I = I(\lambda, \xi) = \left\{ \tau \in \mathbb{R} : \left| \tau - |\xi| \right| < 2(|\lambda| + |\xi|) \right\}.
		\]
		
		On $I$, since $|\lambda| \sim |\xi|$, we have $|\tau| \lesssim |\xi|$. Thus 
		\[
		\begin{aligned}
			B_1(\lambda,\xi) 
			& \lesssim \langle \xi \rangle ^s \left\|(1 + \bigl| |\tau| - |\xi| \bigr|)^b \frac{\widehat{\theta}(\tau - \lambda) - \widehat{\theta}(\tau - |\xi|)}{|\lambda| + |\xi|} \right\|_{L^2_\tau(I)}
			\\
			& \lesssim \langle \xi \rangle ^s \left\|\frac{\widehat{\theta}(\tau - \lambda) - \widehat{\theta}(\tau - |\xi|)}{|\lambda| + |\xi|} \right\|_{L^2_\tau(I)} 
			+ \langle \xi \rangle ^s \left\|\bigl| |\tau| - |\xi| \bigr|^b \frac{\widehat{\theta}(\tau - \lambda) - \widehat{\theta}(\tau - |\xi|)}{|\lambda| + |\xi|} \right\|_{L^2_\tau(I)}
			\\
			& \lesssim \int_{0}^{1} \langle \xi \rangle ^s \left\|\widehat{\theta}^{\prime}(\tau - |\xi| + \rho(|\lambda| + |\xi|)) \right\|_{L^2_\tau(I)} d\rho 
			+ \langle \xi \rangle ^s \left\| \bigl| |\lambda| + |\xi| \bigr|^{b-1} \widehat{\theta}(\tau - \lambda) - \widehat{\theta}(\tau - |\xi|) \right\|_{L^2_\tau(I)}.
		\end{aligned}
		\]
		Due to
		\[
		\big| \tau - |\xi| \big| < 2(|\lambda| + |\xi|), \quad |\tau - \lambda | \leq \big|\tau - |\xi|\big| + \big||\xi| - \lambda \big| \lesssim |\lambda| + |\xi|, \quad \text{for} \quad \tau \in I,
		\]
		and the fact that $b-1 < 0$, we have
		\[
		\begin{aligned}
			B_1(\lambda,\xi) 
			& \lesssim \langle \xi \rangle ^s \left(  \| t \theta \|_{L^2} + \left\| |\tau-\lambda|^{b-1} \widehat{\theta}(\tau - \lambda) \right\|_{L^2_\tau} + \left\| \big|\tau-|\xi|\big|^{b-1} \widehat{\theta}(\tau - |\xi|) \right\|_{L^2_\tau}  \right) 
			\\
			& \lesssim \langle \xi \rangle ^s \left( \| t \theta \|_{L^2} + \|\theta\|_{\dot{H}^{b-1}} \right).
		\end{aligned}
		\]
		
		On $\mathbb{R} \setminus I $, since $\bigl| |\tau| \pm |\xi| \bigr| \lesssim \left| \tau - |\xi| + \rho(|\lambda|+|\xi|) \right| $, we have
		\[
		\begin{aligned}
			B_2(\lambda,\xi) 
			& = \int_{0}^{1} \left\| \langle |\tau|+|\xi| \rangle^s \langle |\tau|-|\xi| \rangle^b \widehat{\theta}^{\prime}(\tau - |\xi| + \rho(|\lambda| + |\xi|)) \right\|_{L^2_\tau(\mathbb{R} \setminus I)} d\rho 
			\\
			& \lesssim \int_{0}^{1} \left\| \langle \tau - |\xi| + \rho(|\lambda| + |\xi|) \rangle^{s+b} \widehat{\theta}^{\prime}(\tau - |\xi| + \rho(|\lambda| + |\xi|)) \right\|_{L^2_\tau(\mathbb{R} \setminus I)} d\rho 
			\\
			& \lesssim \|t \theta\|_{H^{s+b}}.
		\end{aligned}
		\]
		
		In both cases we conclude, by applying Minkowski's inequality and with the fact that $|\xi| \geq c$, we obtain
		\begin{equation}\label{R+}
		\begin{aligned}
		\|\theta R_+ \|_{X^{s, b}}
		& \lesssim \int_{-\infty}^0 \left\| \left( B_1(\lambda, \xi) + B_2(\lambda, \xi) \right) |\xi|^{-1} \widehat{F_{1,2}}(\lambda, \xi) \right\|_{L_\xi^2} d\lambda,
		\\
		& \lesssim \left( \| t \theta \|_{L^2} + \|\theta\|_{\dot{H}^{b-1}} \right) \int_{-\infty}^0 \left\| \langle \xi \rangle ^s  |\xi|^{-1} \widehat{F_{1,2}}(\lambda, \xi) \right\|_{L_\xi^2} d\lambda + \|t \theta\|_{H^{s+b}} \int_{-\infty}^0 \left\||\xi|^{-1} \widehat{F_{1,2}}(\lambda, \xi) \right\|_{L_\xi^2} d\lambda
		\\
		& \lesssim c^{\frac12} \left( \| t \theta \|_{L^2} + \|\theta\|_{\dot{H}^{b-1}} \right) \|F_{1,2}\|_{s-1,0} + c^{\frac12-s} \|t \theta\|_{H^{s+b}} \|F_{1,2}\|_{s-1,0}
		\\
		& \lesssim \left( c^{\frac52 -s-b} ( \| t \theta \|_{L^2} + \|\theta\|_{\dot{H}^{b-1}} ) + c^{\frac52-2s-b} \|t \theta\|_{H^{s+b}} \right) \|F_1\|_{X^{s-1,b-1}}.
		\end{aligned}
		\end{equation}
		By a similar argument, we can get the estimate for $\theta(t) R_-$, we have
		\begin{equation}\label{R}
			\|\theta R_{\pm} \|_{X^{s, b}} \lesssim \left( c^{\frac52 -s-b} ( \| t \theta \|_{L^2} + \|\theta\|_{\dot{H}^{b-1}} ) + c^{\frac52-2s-b} \|t \theta\|_{H^{s+b}} \right) \|F_1\|_{X^{s-1,b-1}}.
		\end{equation}
		
		In conclusion, it follows form \eqref{ew11}, \eqref{ew12} and \eqref{R} that
		\[
		\|\theta(t) w_1\|_{X^{s,b}} \leq C \|F_1\|_{X^{s-1,b-1}},
		\]
		where 
		\[
		\begin{aligned}
			C
			& =  \sum_{j=1}^{\infty} \frac{ c^{j + \frac32 -s -b} \| t^{j} \theta \|_{H^{s+b}}}{j!} +  \sum_{j=1}^{\infty} \frac{ c^{j + \frac52 -s -b} \| t^{j+1} \theta \|_{H^{s+b}}}{j!} 
			\\
			& + c^{\frac52 -s-b} ( \| t \theta \|_{L^2} + \|\theta\|_{\dot{H}^{b-1}} ) + c^{\frac52-2s-b} \|t \theta\|_{H^{s+b}}.
		\end{aligned}
		\]
		Thus, the proof of Proposition~\ref{ew1} is complete.
	\end{proof}
	
	{\bfseries Proof of Proposition~\ref{energy1dim}.} By Proposition~\ref{w0},
	\begin{equation} \label{theta w0}
		\| \theta(t) w_0 \|_{X^{s,b}} \leq C (\|f\|_{H^s} + \|g\|_{H^{s-1}}), 
	\end{equation}
	where $C$ only depends on $\theta$ and $b$.
	
	Note that
	\[
	\operatorname{supp} \widetilde{F_1} \subseteq 4\sqrt{2} \, \mathcal{N}, \quad
	\operatorname{supp} \widetilde{F_2} \subseteq \mathbb{R}^{1+d} \setminus \mathcal{N}.
	\]
	and $\theta_T(t) = \theta(t/T)$, we can check
	\[
	\bigl| |\tau| - |\xi| \bigr| \leq 2\sqrt{2} T^{-\frac14}, \quad \text{for } (\tau, \xi) \in \operatorname{supp} \widetilde{F_1}.
	\]
	whence \eqref{c} holds when $c = 2 + 2\sqrt{2} T^{-\frac14}$. Thus, by Proposition~\ref{ew1},
	\begin{equation} \label{theta w1}
		\| \theta_T(t) w_1 \|_{X^{s,b}} \leq C_T \|F_1\|_{X^{s-1,b-1}} \leq C_T \|F_1\|_{X^{s-1,b+\epsilon-1}},
	\end{equation}
	where
	\[
	\begin{aligned}
		C_T
		& =  \sum_{j=1}^{\infty} \frac{ c^{j + \frac32 -s -b} \| t^{j} \theta_T \|_{H^{s+b}}}{j!} +  \sum_{j=1}^{\infty} \frac{ c^{j + \frac52 -s -b} \| t^{j+1} \theta_T \|_{H^{s+b}}}{j!} 
		\\
		& + c^{\frac52 -s-b} ( \| t \theta_T \|_{L^2} + \|\theta_T\|_{\dot{H}^{b-1}} ) + c^{\frac52-2s-b} \|t \theta_T\|_{H^{s+b}}.
	\end{aligned}
	\]
	Since
	\[
	\|t^k \theta_T\|_{H^{s+b}} \lesssim T^{k+1/2-s-b} \|\theta\|_{H^{s+b}} \quad \text{for } 0 < T \leq 1,
	\]
	and
	\[
	\|\theta_T\|_{\dot{H}^{b-1}} = T^{3/2 - b} \|\theta\|_{\dot{H}^{b-1}} \quad \text{for } \theta > 1/2,
	\]
	we get 
	\[
	\begin{aligned}
		C_T
		& =  \sum_{j=1}^{\infty} \frac{ (cT)^{j + \frac12 -s -b} \cdot c \cdot \| t^{j} \theta \|_{H^{s+b}}}{j!} +  \sum_{j=1}^{\infty} \frac{ (cT)^{j + \frac32 -s -b} \cdot c \cdot \| t^{j+1} \theta \|_{H^{s+b}}}{j!} 
		\\
		& + (cT)^{\frac32} \cdot c^{1-s-b} \| t \theta \|_{L^2} + (cT)^{\frac32-b} \cdot c^{1-s} \|\theta\|_{\dot{H}^{b-1}} + (cT)^{\frac32-s-b} \cdot c^{1-s} \|t \theta\|_{H^{s+b}}
		\\
		& \lesssim  \sum_{j=1}^{\infty} \frac{ T^{\frac34(j-s-b) + \frac18} \|t^{j} \theta \|_{H^{s+b}}}{j!} + \sum_{j=1}^{\infty} \frac{ T^{\frac34(j-s-b) + \frac78} \| t^{j+1} \theta \|_{H^{s+b}}}{j!} 
		\\
		& + T^{\frac14(s+b) + \frac78} \| t \theta \|_{L^2} + T^{\frac78 - \frac34 b + \frac14 s} \|\theta\|_{\dot{H}^{b-1}} + T^{\frac78 - \frac12 s - \frac34 b} \|t \theta\|_{H^{s+b}}.
	\end{aligned}
	\]
	Thus, since \( c \lesssim T^{-1/4} \) and \( \theta < 1 \), we conclude that \( C_T \leq C_\chi T^{1/8} \), where
	\[
	C_\chi \simeq \| \theta \|_{\dot{H}^{b-1}} + \| t \theta \|_{H^{s+b}} + \sum_{j=1}^\infty \frac{1}{j!} \left( \| t^{j+1} \theta \|_{H^{s+b}}  + \| t^j \theta \|_{H^{s+b}} \right).
	\]
	
	Next, since it is readily verified that
	\[
	\bigl| |\tau| - |\xi| \bigr| > \frac{1}{T^{1/4}} \quad \text{for } (\tau, \xi) \in \operatorname{supp} \widehat{F_2},
	\]
	we have
	\[
	\theta_T w_2 \lesssim T^{\epsilon/4} \Lambda_+^{-1} \Lambda_-^{\epsilon-1} F,
	\]
	whence
	\begin{equation} \label{theta w2}
		\|\theta_T w_2\|_{X^{x,b}} \lesssim T^{\epsilon/4} \| F \|_{X^{s-1, \theta+\epsilon-1}}.
	\end{equation}
	Adding \eqref{theta w0}, \eqref{theta w1} and \eqref{theta w2}, we have proved \eqref{energy11}. Due to the representation of solutions for linear waves, the function \( w \) defined in \eqref{1defw} satisfies the Cauchy problem \eqref{1linearw} for \( (t, x) \in [0, T] \times \mathbb{R}^d \). By standard energy estimates, \( w \) is the unique solution of \eqref{1linearw}. Therefore, the proof of Proposition~\ref{energy1dim} is complete.
	
	\section*{Acknowledgments}
	The author Huali Zhang is supported by Natural Science Foundation of Hunan Province, China (Grant No. 2025JJ40003) and the Fundamental Research Funds for the Central Universities (Grant No. 531118010867).
	
	\section*{Data Availability}
	The authors also confirm that the data supporting the findings of this study are available within the article.


{\small \begin{thebibliography}{plain}
		\bibitem{AFS1} P. D'Ancona, D. Foschi, and S. Selberg, {\it Product estimates for wave--Sobolev spaces in $2+1$ and $1+1$ dimensions}, Contemp. Math. {\bf 526} (2010), 125--150.
			
		\bibitem{AnKapi} L. Andersson and L. Kapitanski, {\it Cauchy problem for incompressible neo-Hookean materials}, Arch. Ration. Mech. Anal. {\bf 247} (2023), no.~2, Paper No. 21, 76 pp.
			
		\bibitem{ATY} K. Arthur, D.~H. Tchrakian, and Y. Yang, {\it Topological and nontopological self-dual Chern-Simons solitons in a gauged ${\rm O}(3)$ $\sigma$ model}, Phys. Rev. D {\bf 54} (1996), no.~8, 5245--5258.
			
		\bibitem{BBS} L. Berg\'e, A. de Bouard, and J.-C. Saut, {\it Blowing up time-dependent solutions of the planar, Chern-Simons gauged nonlinear Schrödinger equation}, Nonlinearity {\bf 8} (1995), no.~2, 235--253.
			
		\bibitem{CC} D. Chae and K. Choe, {\it Global existence in the Cauchy problem of the relativistic Chern-Simons-Higgs theory}, Nonlinearity {\bf 15} (2002), no.~3, 747--758.
			
		\bibitem{CCS} J.-L. Chern, Z.-Y. Chen, and H.-Y. Shen, {\it Classification of solutions for self-dual Chern-Simons $CP(1)$ model}, J. Math. Phys. {\bf 62} (2021), no.~3, Paper No. 031510, 25 pp.
			
		\bibitem{CH} K. Choe and J. Han, {\it Existence and properties of radial solutions in the self-dual Chern-Simons $O(3)$ sigma model}, J. Math. Phys. {\bf 52} (2011), no.~8, 082301, 20 pp.
			
		\bibitem{CHLL1} K. Choe et al., {\it Uniqueness and solution structure of nonlinear equations arising from the Chern-Simons gauged $O(3)$ sigma models}, J. Differential Equations {\bf 255} (2013), no.~8, 2136--2166.
			
		\bibitem{CHLL2} K. Choe et al., {\it Bubbling solutions for the Chern-Simons gauged $O(3)$ sigma model on a torus}, Calc. Var. Partial Differential Equations {\bf 54} (2015), no.~2, 1275--1329.
			
		\bibitem{CN} K. Choe and H. Nam, {\it Existence and uniqueness of topological multivortex solutions of the self-dual Chern–Simons CP(1) model}, Nonlinear Anal. {\bf 66} (2007), no.~12, 2794--2813.
			
		\bibitem{CS} S. Chern and J. Simons, {\it Some cohomology classes in principal fiber bundles and their application to Riemannian geometry}, Proc. Nat. Acad. Sci. U.S.A. {\bf 68} (1971), 791--794.
			
		\bibitem{DFW} W. Dai, D. Fang, and C. Wang, {\it Long-time existence for semilinear wave equations with the inverse-square potential}, J. Differential Equations {\bf 309} (2022), 98--141.
			
		\bibitem{FW} D. Fang and C. Wang, {\it Local well-posedness and ill-posedness on the equation of type $\square u=u^k(\partial u)^{\alpha}$}, Chinese Ann. Math. Ser. B {\bf 26} (2005), no.~3, 361--378.
			
		\bibitem{FK} D. Foschi and S. Klainerman, {\it Bilinear space-time estimates for homogeneous wave equations}, Ann. Sci. \'Ecole Norm. Sup. (4) {\bf 33} (2000), no.~2, 211--274.
			
		\bibitem{GG} P. K. Ghosh and S. K. Ghosh, {\it Topological and Nontopological Solitons in a Gauged $O(3)$ Sigma Model with Chern-Simons term}, Phys. Lett. B {\bf 366} (1996), no.~1-4, 199--204.
			
		\bibitem{GN} V. Grigoryan and A.~R. Nahmod, {\it Almost critical well-posedness for nonlinear wave equations with $Q_{\mu\nu}$ null forms in 2D}, Math. Res. Lett. {\bf 21} (2014), no.~2, 313--332.
			
		\bibitem{HJ} H. Huh and G. Jin, {\it Local and global solutions of Chern-Simons gauged $O(3)$ sigma equations in one space dimension}, J. Math. Phys. {\bf 57} (2016), no.~8, 081511, 20 pp.
			
		\bibitem{Huh1} H. Huh, {\it Low regularity solutions of the Chern-Simons-Higgs equations}, Nonlinearity {\bf 18} (2005), no.~6, 2581--2589.
			
		\bibitem{Huh2} H. Huh, {\it Local and global solutions of the Chern-Simons-Higgs system}, J. Funct. Anal. {\bf 242} (2007), no.~2, 526--549.
			
		\bibitem{Huh4} H. Huh, {\it Cauchy problem for the fermion field equation coupled with the Chern–Simons gauge}, Lett. Math. Phys. {\bf 79} (2007), no.~1, 75--94.
			
		\bibitem{Huh5} H. Huh, {\it Energy solution to the Chern-Simons-Schr\"odinger equations}, Abstr. Appl. Anal. {\bf 2013} (2013), no.~1, p. 590653.
			
		\bibitem{HO} H. Huh and S.-J. Oh, {\it Low regularity solutions to the Chern-Simons-Dirac and the Chern-Simons-Higgs equations in the Lorenz gauge}, Comm. Partial Differential Equations {\bf 41} (2016), no.~3, 375--397.
			
		\bibitem{JZ} G. Jin and H. Zhang, {\it Local well-posedness for Chern-Simons gauged $O(3)$ sigma equations under the Lorenz gauge}, arXiv preprint arXiv:2503.13871 (2025).
			
		\bibitem{KLL} K. Kimm, K. Lee, and T. Lee, {\it Anyonic Bogomol’nyi solitons in a gauged $O(3)$ $\sigma$ model}, Phys. Rev. D {\bf 53} (1996), no.~8, 4436--4440.
			
		\bibitem{KM1} S. Klainerman and M. Machedon, {\it Space‐time estimates for null forms and the local existence theorem}, Comm. Pure Appl. Math. {\bf 46} (1993), no.~9, 1221--1268.
			
		\bibitem{KM2} S. Klainerman and M. Machedon, {\it Smoothing estimates for null forms and applications}, Duke Math. J. {\bf 81} (1996), no.~1, 99-133.
			
		\bibitem{KM3} S. Klainerman, {\it Long time behaviour of solutions to nonlinear wave equations}, in {\it Proceedings of the International Congress of Mathematicians, Vol.\ 1, 2 (Warsaw, 1983)}, 1209--1215, PWN, Warsaw.
		
		\bibitem{KS} S. Klainerman and S. Selberg, {\it Bilinear estimates and applications to nonlinear wave equations}, Commun. Contemp. Math. {\bf 4} (2002), no.~2, 223--295.
			
		\bibitem{KT} M. Keel and T. Tao, {\it Local and global well-posedness of wave maps on $\mathbb{R}^{1+1}$ for rough data}, Internat. Math. Res. Notices (1998), no.~21, 1117--1156.
		
		\bibitem{L} R.~A. Leese, {\it Low-energy scattering of solitons in the ${\bf C}{\rm P}^1$ model}, Nuclear Phys. B {\bf 344} (1990), no.~1, 33--72.
			
		\bibitem{Lim} Z.~M. Lim, {\it Large data well-posedness in the energy space of the Chern-Simons-Schr\"odinger system}, J. Differential Equations {\bf 264} (2018), no.~4, 2553--2597.
			
		\bibitem{LST} B. Liu, P. Smith, and D. Tataru, {\it Local wellposedness of Chern–Simons–Schr\"{o}dinger}, Int. Math. Res. Not. IMRN {\bf 2014}, no.~23, 6341--6398.
		
		\bibitem{RS} I. Rodnianski and J. Sterbenz, {\it On the formation of singularities in the critical $O(3)$ $\sigma$-model}, Ann. of Math. (2) {\bf 172} (2010), no.~1, 187--242.
			
		\bibitem{Sch} B. J. Schroers, {\it Bogomol'nyi solitons in a gauged $O(3)$ sigma model}, Phys. Lett. B {\bf 356} (1995), no.~2--3, 291--296.
			
		\bibitem{Selberg} S. Selberg, {\it Multilinear space-time estimates and applications to local existence theory for nonlinear wave equations}, Ph.D. thesis, Princeton University, ProQuest LLC, Ann Arbor, MI, 1999.
			
		\bibitem{ST} S. Selberg and A. Tesfahun, {\it Global well-posedness of the Chern-Simons-Higgs equations with finite energy}, Discrete Contin. Dyn. Syst. {\bf 33} (2012), 2531--2546.
			
		\bibitem{STa} J. Sterbenz and D. Tataru, {\it Energy dispersed large data wave maps in $2+1$ dimensions}, Comm. Math. Phys. {\bf 298} (2010), no.~1, 139--230.
			
		\bibitem{Ta1} D. Tataru, {\it The $X^s_\theta$ spaces and unique continuation for solutions to the semilinear wave equation}, Comm. Partial Differential Equations {\bf 21} (1996), no.~5--6, 841--887.
			
		\bibitem{Ta2} D. Tataru, {\it Strichartz estimates in the hyperbolic space and global existence for the semilinear wave equation}, Trans. Amer. Math. Soc. {\bf 353} (2001), no.~2, 795--807.
			
		\bibitem{Ta3} D. Tataru, {\it Local and global results for wave maps. I}, Comm. Partial Differential Equations {\bf 23} (1998), no.~9--10, 1781--1793.
			
		\bibitem{Ta4} D. Tataru, {\it The wave maps equation}, Bull. Amer. Math. Soc. (N.S.) {\bf 41} (2004), no.~2, 185--204.
			
		\bibitem{WZ} S. Wang and Y. Zhou, {\it Physical space approach to wave equation bilinear estimates revisit}, Ann. PDE {\bf 10} (2024), no.~2, Paper No. 11, 14 pp.
			
		\bibitem{Yang} Y. Yang, {\it The existence of solitons in gauged sigma models with broken symmetry: some remarks}, Lett. Math. Phys. {\bf 40} (1997), no.~2, 177--189.
			
		\bibitem{Zhang} H. Zhang, {\it Local well-posedness for incompressible neo-Hookean elastic equations in almost critical Sobolev spaces}, Calc. Var. Partial Differential Equations {\bf 63} (2024), no.~3, Paper No. 66, 20 pp.
			
			
		\bibitem{Zhou2} Y. Zhou, {\it Local existence with minimal regularity for nonlinear wave equations}, Amer. J. Math. {\bf 119} (1997), no.~3, 671--703.
			
			
			
		\bibitem{Zhou5} Y. Zhou, {\it (1+2)-dimensional radially symmetric wave maps revisit}, Chinese Ann. Math. Ser. B {\bf 43} (2022), no.~5, 785--796.
	\end{thebibliography}}
	\end{document}